\documentclass[12pt]{amsart} 
\usepackage{amssymb,amsmath,amsthm,amsfonts}
\usepackage {latexsym,enumerate}
\usepackage{bbm}

\include{srctex}

\usepackage{marginnote}
\usepackage[letterpaper, top=1.2in, bottom=1.2in, outer=1.1in, inner=1.1in, heightrounded, marginparwidth=2.5cm, marginparsep=2mm]{geometry}

\linespread{1.4}

\allowdisplaybreaks
\sloppy




\newcommand{\nc}{\newcommand}
\nc{\les}{\lesssim}
\nc{\nit}{\noindent}
\nc{\nn}{\nonumber}
\nc{\D}{\partial}
\nc{\diff}[2]{\frac{d #1}{d #2}}
\nc{\diffn}[3]{\frac{d^{#3} #1}{d {#2}^{#3}}}
\nc{\pdiff}[2]{\frac{\partial #1}{\partial #2}}
\nc{\pdiffn}[3]{\frac{\partial^{#3} #1}{\partial{#2}^{#3}}}
\nc{\abs}[1] {\lvert #1 \rvert}
\nc{\cAc}{{\cal A}_c}
\nc{\cE}{{\cal E}}
\nc{\cF}{{\cal F}}
\nc{\cP}{{\cal P}}
\nc{\cV}{{\cal V}}
\nc{\cQ}{{\cal Q}}
\nc{\cGin}{{\cal G}_{\rm in}}
\nc{\cGout}{{\cal G}_{\rm out}}
\nc{\cO}{{\cal O}}
\nc{\Lav}{{\cal L}_{\rm av}}
\nc{\cL}{{\cal L}}
\nc{\cB}{{\cal B}}
\nc{\cZ}{{\cal Z}}
\nc{\cR}{{\cal R}}
\nc{\cT}{{\cal T}}
\nc{\cY}{{\cal Y}}
\nc{\cX}{{\cal X}}
\nc{\cXT}{{{\cal X}(T)}}
\nc{\cBT}{{{\cal B}(T)}}
\nc{\vD}{{\vec \mathcal{D}}}
\nc{\efield}{\mathcal{E}}
\nc{\vE}{{\vec \efield}}
\nc{\vB}{{\vec \mathcal{B}}}
\nc{\vH}{{\vec \mathcal{H}}}
\nc{\mR}{\mathcal{R}}
\nc{\mG}{\mathcal{G}}
\nc{\ty}{{\tilde y}}
\nc{\tu}{{\tilde u}}
\nc{\tV}{{\tilde V}}
\nc{\Pc}{{\bf P_c}}
\nc{\bx}{{\bf x}}
\nc{\bX}{{\bf X}}
\nc{\bXYZ}{{\bf XYZ}}
\nc{\oo}{\"o}
\nc{\bY}{{\bf Y}}
\nc{\bF}{{\bf F}}
\nc{\bS}{{\bf S}}
\nc{\dV}{{\delta V}}
\nc{\dE}{{\delta E}}
\nc{\TT}{{\Theta}}
\nc{\dPsi}{{\delta\Psi}}
\nc{\order}{{\cal O}}
\nc{\Rout}{R_{\rm out}}
\nc{\eplus}{e_+}
\nc{\eminus}{e_-}
\nc{\epm}{e_\pm}
\nc{\eps}{\varepsilon}
\nc{\vnabla}{{\vec\nabla}}
\nc{\G}{\Gamma}
\nc{\w}{\omega}
\nc{\mh}{h}
\nc{\mg}{g}
\nc{\vphi}{\varphi}
\nc{\tlambda}{\tilde\lambda}
\nc{\be}{\begin{equation}}
\nc{\ee}{\end{equation}}
\nc{\ba}{\begin{eqnarray}}
\nc{\ea}{\end{eqnarray}}

\nc{\g}{\gamma}
\nc{\ol}{\overline}

\newtheorem{theorem}{Theorem}[section]
\newtheorem{lemma}[theorem]{Lemma}
\newtheorem{prop}[theorem]{Proposition}
\newtheorem{corollary}[theorem]{Corollary}
\newtheorem{defin}[theorem]{Definition}

\nc{\pT}{\partial_T}
\nc{\pz}{\partial_z}
\nc{\pt}{\partial_t}
\nc{\la}{\langle}
\nc{\ra}{\rangle}
\nc{\infint}{\int_{-\infty}^{\infty}}
\nc{\halfwidth}{6.5cm}
\nc{\figwidth}{10cm}
\newcommand{\f}{\frac}

\nc{\nlayers}{L} \nc{\nsectors}{M}
\nc{\indicator}{\mathbf{1}}
\nc{\Rhole}{R_{\rm hole}}
\nc{\Rring}{R_{\rm ring}}
\nc{\neff}{n_{\rm eff}}
\nc{\Frem}{F_{\rm rem}}
\nc{\R}{\mathbb R}
\nc{\C}{\mathbb C}
\nc{\Z}{\mathbb Z}
\nc{\N}{\mathbb N}
\nc{\DD}{\Delta}
\nc{\cD}{\mathcal D}
\nc{\lnorm}{\left\|}
\nc{\rnorm}{\right\|}
\nc{\rnormp}{\right\|_{\ell^{p,\eps}}}
\nc{\rar}{\rightarrow}
\sloppy

\begin{document}

\begin{abstract}

	We investigate $L^1\to L^\infty$ dispersive estimates for the massless two dimensional Dirac  equation with a potential.  In particular, we show that the Dirac evolution satisfies the natural $t^{-\f12}$ decay rate, which may be improved to $t^{-\f12-\gamma}$ for any $0\leq \gamma<\frac{3}{2}$ at the cost of spatial weights. 
	We classify the structure of threshold obstructions as being composed of a two dimensional space of p-wave resonances and a finite dimensional space of eigenfunctions at zero energy.  We show that, in the presence of a threshold resonance, the Dirac evolution satisfies the natural decay rate except for a finite-rank piece.  While in the case of a threshold eigenvalue only, the natural decay rate is preserved.  In both cases we show that the decay rate may be improved at the cost of spatial weights.
	
\end{abstract}

\title[The Massless Dirac Equation in Two Dimensions]{\textit{The Massless Dirac Equation in Two Dimensions: Zero-Energy Obstructions and Dispersive Estimates}}

\author{M. Burak Erdo\smash{\u{g}}an, Michael Goldberg, William R. Green}
\thanks{The first author was partially supported by NSF grant DMS-1501041.
The second author is supported by Simons Foundation Grant 281057. The third author is supported by Simons Foundation Grant 511825.}  
\address{Department of Mathematics \\
University of Illinois \\
Urbana, IL 61801, U.S.A.}
 \email{berdogan@math.uiuc.edu}

\address{Department of Mathematics\\
University of Cincinnati \\
Cincinnati, OH 45221 U.S.A.}
\email{goldbeml@ucmail.uc.edu}
\address{Department of Mathematics\\
Rose-Hulman Institute of Technology \\
Terre Haute, IN 47803, U.S.A.}
\email{green@rose-hulman.edu} 

\maketitle
\section{Introduction}

We consider the linear Dirac equation with a potential:
\begin{align}\label{eqn:Dirac}
	i\partial_t \psi(x,t)=(D_m+V(x))\psi(x,t), \qquad
	\psi(x,0)=\psi_0(x).
\end{align}
Here  the spatial
variable $x\in \mathbb R^2$, and $\psi(x,t) \in \mathbb C^{2 }$.  The free Dirac operator
$D_m$ is defined by
\begin{align}\label{eqn:Dmdef}
	D_m=-i\alpha \cdot \nabla +m\beta =-i \sum_{k=1}^{2}\alpha_k \partial_{k}+m\beta
\end{align}
where $m\geq0$ is a constant, and 
the $2\times 2$ Hermitian matrices $\alpha_0:=\beta$ and $\alpha_j$ satisfy
\be \label{eqn:anticomm}
		\alpha_j \alpha_k+\alpha_k\alpha_j =2\delta_{jk}
		\mathbbm 1_{\mathbb C^{2}}, \,\,\,\,\,\,\,
		  j,k \in\{0, 1,2\}.
\ee
We consider the massless case, when $m=0$.  
For concreteness,  
we use
\begin{align}
	\beta=\left(\begin{array}{cc} 1& 0\\ 0 & -1
	\end{array}\right), \qquad \alpha_1=\left(\begin{array}{cc} 0 & 1\\ 1 & 0
	\end{array}\right), \qquad
	\alpha_2=\left(\begin{array}{cc} 0 & -i\\ i & 0
	\end{array}\right).
\end{align}
There is much interest in the massless case due to its connection to graphene, see \cite{FW} for example.
The Dirac equation was derived by Dirac as an attempt to connect the theories of quantum mechanics and special relativity.  Dirac's derivation allowed for a model that is first order in time, as required for quantum mechanical interpretations while having a finite speed of propagation and allowing for external fields in a relativistically invariant manner.  For a broader introduction to the Dirac equation, we refer the reader to the excellent text of Thaller, \cite{Thaller}.

The following identity,\footnote{Here and throughout the paper, scalar operators such as  $-\Delta+m^2-\lambda^2$ are understood as $(-\Delta+m^2-\lambda^2)\mathbbm 1_{\mathbb C^{2}}$.  Similarly, we denote $L^p(\R^2)\times L^p(\R^2)$ as $L^p(\R^2)$.}  which follows from   \eqref{eqn:anticomm},
\be  \label{dirac_schro_free}
	(D_m-\lambda \mathbbm 1)(D_m+\lambda \mathbbm 1) =(-i\alpha\cdot \nabla +m\beta -\lambda \mathbbm 1)
	(-i\alpha\cdot \nabla+m\beta+\lambda \mathbbm 1)   =(-\Delta+m^2-\lambda^2) 
\ee
allows us to formally define the free Dirac resolvent
operator $\mathcal R_0(\lambda)=(D_m-\lambda)^{-1}$ in terms of the
free resolvent $R_0(\lambda)=(-\Delta-\lambda)^{-1}$ of  the Schr\"odinger operator for $\lambda$ in the resolvent set:
\begin{align}\label{eqn:resolvdef}
	\mathcal R_0(\lambda)=(D_m+\lambda) R_0(\lambda^2-m^2).
\end{align}
For the massless equation, when $m=0$, we have
$$
	\mathcal R_0(\lambda)=(-i\alpha \cdot \nabla+\lambda) R_0(\lambda^2):=(D_0+\lambda)R_0(\lambda^2).
$$
Much of the analysis in this paper will be based on properties of $\mathcal R_0(\lambda)$ as $\lambda \to 0 $.
It should be emphasized that while the Dirac and Schr\"odinger resolvents are closely related by~\eqref{eqn:resolvdef}, the massless Dirac operator has very different behavior from the massive Dirac or Schr\"odinger operators in the low energy regime.  For example, $\mathcal R_0(0)$ exists as a well-defined operator while $R_0(\lambda^2)$ has a logarithmic singularity at the origin and the resolvent of a massive Dirac operator has a logarithmic singularity at the threshold $\lambda = \pm m$.  These differences carry over into the low-energy asymptotic structure of resolvents of $D_0 + V(x)$, which is again distinct from the threshold expansions for either Schr\"odinger or massive Dirac operators, \cite{eg2,egd}.

Detailed asymptotic expansions for the resolvents of both $D_0$ and its perturbations are computed in Section~\ref{sec:exps}. For certain choices of potential, the operator $D_0 + V(x)$ has an eigenvalue at zero.  It is also possible for zero to be a non-regular point of the spectrum without an eigenvalue present, a phenomenon known as a resonance.  We classify zero energy resonances and eigenvalues in terms of distributional solutions to $H\psi=0$ in Section~\ref{sec:TC}.  We say that zero energy is regular if there are no distributional solutions to $H\psi=0$ with $\psi\in L^\infty(\R^2)$, which may also be characterized by the uniform boundedness of the perturbed resolvent $(D_0+V-\lambda)^{-1}$ as $\lambda \to 0$.  We show that the classification of resonances for the massless Dirac equation and their dynamical consequences do not follow the same patterns as the Schr\"odinger equation.  

Before stating the dynamical results, we introduce some notation that will be used throughout the paper.  The function $\chi(\lambda)$ will denote a smooth, even cut-off around the origin in $\R$.  That is, $\chi(\lambda)=1$ if $|\lambda|<\lambda_1$ and $\chi(\lambda)=0$ if $|\lambda|>2\lambda_1$ for a sufficiently small, fixed constant $\lambda_1>0$.  The complementary cut-off is $\widetilde{\chi} = 1 - \chi$.  We use the notation $\la y\ra:=(1+|y|)^{\f12}$, and write $H:=D_0+V$ for the perturbed Dirac operator.  We also write $|V(x)|\les \la x\ra^{-\beta}$ to indicate that the entries of the potential all satisfy $|V_{ij}(x)|\les \la x\ra^{-\beta}$, $1\leq i,j\leq 2$, where $A\leq B$ denotes that there is an absolute constant $C$ so that $A\leq CB$.  We define the weighted spaces $L^{1,\gamma}=\{ f\,:\, \la \cdot \ra^{\gamma} f \in L^1(\mathbb R^2) \}$, and $L^{\infty,-\gamma}=\{ f\,:\, \la \cdot \ra^{-\gamma} f \in L^\infty(\mathbb R^2) \}$.
Our main results are the following small energy bounds:

\begin{theorem}\label{thm:main}
	
	Assume that $V$ is self-adjoint and $|V(x)| \les \la x\ra^{-\beta}$. 
	\begin{enumerate}[i)]
		
		\item Assume that zero is regular. If $\beta>2$,  then
		$$
		\| e^{-itH}\chi(H)  \|_{L^1\to L^\infty} \les \la t\ra^{-\f12}.
		$$
		Further, for $0\leq \gamma <\frac32$, if $\beta>2+2\gamma$, then
		$$
		\| e^{-itH} \chi(H)\|_{L^{1,\gamma}\to L^{\infty,-\gamma}} \les \la t\ra^{-\f12-\gamma}.
		$$
		
		\item If zero is not regular, then for fixed $0\leq \gamma<\tfrac12$, 
		$$
		\| e^{-itH}P_{ac} \chi(H) -F_t\|_{L^{1,\gamma}\to L^{\infty,-\gamma}}\les \la t\ra^{-\f12-\gamma},
		$$
		provided that $\beta>3+2\gamma$. Here $F_t$ is a finite-rank operator, which satisfies the bounds $ \sup_t \|F_t\|_{L^1\to L^\infty}\les 1 $ and if $|t|>2$ one has $\|F_t\|_{L^1\to L^\infty}\les (\log |t|)^{-1}$.
		
		\item If there is only an eigenvalue  at zero, then $F_t=0$.
		
	\end{enumerate}
	
\end{theorem}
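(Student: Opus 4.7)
The approach is to use Stone's formula to express the low-energy propagator as an oscillatory integral,
$$
e^{-itH}\chi(H)P_{ac}=\frac{1}{2\pi i}\int_{-\infty}^{\infty} e^{-it\lambda}\chi(\lambda)\bigl[\cR_V^+(\lambda)-\cR_V^-(\lambda)\bigr]\,d\lambda,
$$
and to analyze the perturbed resolvent $\cR_V^\pm(\lambda)=(H-\lambda\mp i0)^{-1}$ via a symmetric resolvent identity
$\cR_V(\lambda)=\cR_0(\lambda)-\cR_0(\lambda)\, v\, M(\lambda)^{-1}\, v\, \cR_0(\lambda)$, with an appropriate matrix factorization $V=Uv^2$ and $M(\lambda)=U+v\cR_0(\lambda)v$. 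The low-energy expansions of $\cR_0(\lambda)$ from Section~\ref{sec:exps} and the classification of threshold obstructions from Section~\ref{sec:TC} are the crucial inputs; together they determine the singular structure of $M(\lambda)^{-1}$ near $\lambda=0$ in each of the three scenarios. Once $M(\lambda)^{-1}$ is understood, the theorem reduces to oscillatory integral estimates, uniform in the kernel variables $(x,y)$.

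For part (i), regularity of zero energy means $M(0)$ is invertible on the appropriate subspace, so a Neumann series gives $M(\lambda)^{-1}=M(0)^{-1}+\lambda E_1(\lambda)+\lambda^2(\log\lambda)E_2(\lambda)+\cdots$ with remainders inheriting the differentiability of $\cR_0(\lambda)$. Substituting back, the unweighted $\la t\ra^{-1/2}$ bound follows from the standard one-dimensional oscillatory estimate $|\int e^{-it\lambda}\chi(\lambda)\phi(\lambda)\,d\lambda|\les \la t\ra^{-1/2}$ when $\phi$ has an $L^1$ derivative. The improvement to $\la t\ra^{-1/2-\gamma}$ comes from a fractional integration by parts in $\lambda$ (or equivalently a Plancherel argument for $H^\gamma$), at the cost of extra powers of $\la x\ra^\gamma \la y\ra^\gamma$ arising from $\lambda$-derivatives of the Bessel-type kernels in $\cR_0(\lambda)$; these weights are absorbed by the mapping space $L^{1,\gamma}\to L^{\infty,-\gamma}$ and the decay hypothesis $\beta>2+2\gamma$. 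The threshold $\gamma<3/2$ is dictated by how many derivatives one may place on $\cR_0(\lambda)$ while keeping its kernel locally integrable in $\lambda$.

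For parts (ii) and (iii), the Jensen--Nenciu style inversion adapted to the two-dimensional p-wave resonance structure (carried out in Section~\ref{sec:TC}) produces an expansion
$$
M(\lambda)^{-1}=\frac{1}{\log\lambda}\,Q_{\rm res}+Q_{\rm eig}+\lambda\,(\cdots)+\cdots,
$$
where $Q_{\rm eig}$ carries the eigenvalue contribution and is removed by the projection $P_{ac}$, while $Q_{\rm res}$ is the finite-rank p-wave piece whose $(\log\lambda)^{-1}$ prefactor produces $F_t$. Accordingly one splits the Stone integral as $F_t$ plus a remainder that is analyzed exactly as in part (i), and defines
$$
F_t=\frac{1}{2\pi i}\int e^{-it\lambda}\chi(\lambda)\,\bigl[\cR_0 v\,\tfrac{Q_{\rm res}}{\log\lambda}\,v\cR_0\bigr]^+_-\,d\lambda,
$$
evaluated (to leading order) at $\lambda=0$ in the outer $\cR_0$ factors. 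The uniform-in-$t$ boundedness and the $(\log|t|)^{-1}$ decay of $F_t$ then follow from the elementary oscillatory estimates $\sup_t|\int e^{-it\lambda}\chi(\lambda)(\log\lambda)^{-1}\,d\lambda|<\infty$ and $|\int e^{-it\lambda}\chi(\lambda)(\log\lambda)^{-1}\,d\lambda|\les (\log|t|)^{-1}$ for $|t|>2$, combined with the fact that $\cR_0(0)v:L^1\to L^\infty$ is bounded under the stated decay of $V$. When only an eigenvalue is present, the classification in Section~\ref{sec:TC} yields $Q_{\rm res}=0$, whence $F_t=0$, giving part (iii).

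The main obstacle is establishing the precise singular structure of $M(\lambda)^{-1}$ in the non-regular case, and in particular the cancellation that replaces the naively expected $\lambda^{-1}$ pole (which would destroy any integrable decay) by the far milder $(\log\lambda)^{-1}$ factor. This is a genuinely Dirac-specific phenomenon, traceable to the $(D_0+\lambda)$ prefactor in $\cR_0(\lambda)=(D_0+\lambda)R_0(\lambda^2)$ annihilating the leading logarithm of the Schr\"odinger free resolvent $R_0(\lambda^2)$ when restricted to the resonance subspace; tracking this through the orthogonality relations between the p-wave resonance space and the eigenspace is the technical heart of the argument. The same regularity considerations — how many $\lambda$-derivatives the inverse $M(\lambda)^{-1}$ tolerates near zero — are what force the slightly stronger decay $\beta>3+2\gamma$ in parts (ii)--(iii), compared with $\beta>2+2\gamma$ in part (i).
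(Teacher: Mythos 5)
Your overall skeleton (Stone's formula, symmetric resolvent identity, Jensen--Nenciu inversion of $M^\pm(\lambda)$, separate treatment of the resonance and eigenvalue subspaces) matches the paper, but two of your key mechanisms are not the ones that actually make the argument work, and one of them is wrong in a way that would derail parts (ii)--(iii).

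First, in part (i) you claim the unweighted $\la t\ra^{-\f12}$ bound follows from the standard estimate for $\int e^{-it\lambda}\chi(\lambda)\phi(\lambda)\,d\lambda$ ``when $\phi$ has an $L^1$ derivative.'' Under the hypothesis $\beta>2$ the spectral density is only H\"older continuous of order $\f12$ in $\lambda$, and more importantly $\partial_\lambda\mR_0^\pm(\lambda)(x,y)$ grows like $(|\lambda||x-y|)^{1/2}$, so any integration by parts necessarily produces spatial weights and cannot yield a bound uniform in $x,y$. The paper's device is different: it never differentiates for the unweighted bound, but instead uses the half-period shift $\int e^{-it\lambda}\phi\,d\lambda=\tfrac12\int e^{-it\lambda}(\phi(\lambda)-\phi(\lambda-\tfrac{\pi}{t}))\,d\lambda$ together with a $\f12$-H\"older (Lipschitz-type) estimate on the spectral measure, uniform in $x,y$; this is exactly what allows $\beta>2$ rather than $\beta>3$. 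Your proposed route would only prove the weighted estimates.

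Second, and more seriously, your claimed expansion $M(\lambda)^{-1}=\f{1}{\log\lambda}Q_{\rm res}+Q_{\rm eig}+\cdots$ is not correct: there is no cancellation inside $M^\pm(\lambda)^{-1}$ that removes the $\lambda^{-1}$ pole. The paper's Lemma on $M^\pm(\lambda)^{-1}$ gives $M^\pm(\lambda)^{-1}=\f1\lambda S_1A^\pm(\lambda)^{-1}S_1+E_4^\pm(\lambda)$ with $A^\pm(\lambda)^{-1}=O(1)$ (and $\widetilde O_1((\log\lambda)^{-1})$ on the p-wave part), so the leading term is genuinely of size $(\lambda\log\lambda)^{-1}$. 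What rescues integrability is the $+/-$ cancellation in the Stone formula: $A^+(\lambda)^{-1}-A^-(\lambda)^{-1}=\widetilde O_1((\log\lambda)^{-2})$, so the singular piece of the difference is $\widetilde O_1\big(\tfrac{1}{\lambda\log^2\lambda}\big)$, which is borderline integrable and is precisely what produces the finite-rank $F_t$ with $\|F_t\|\les(\log|t|)^{-1}$; your $(\log\lambda)^{-1}$ profile would instead give $O(|t|^{-1})$ decay, inconsistent with the stated bounds on $F_t$. Likewise, part (iii) does not follow from ``$Q_{\rm eig}$ being removed by $P_{ac}$'': it follows because in the eigenvalue-only case $A^\pm(\lambda)^{-1}=[S_2v\mG_{1,0}v^*S_2]^{-1}$ is independent of $\lambda$ and of the sign, so the $\pm$ difference of the leading term vanishes identically. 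Without identifying the $\pm$ cancellation as the central mechanism, the non-regular case cannot be closed along the lines you describe.
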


We emphasize that our main results are the low energy bounds presented above. We also provide an explicit construction of the operator $F_t$, see \eqref{eqn:Ft def} below.
For the sake of completeness, we include the high energy result stated below.

\begin{theorem}\label{thm:main_hi}
	
	Assuming $V$ is self-adjoint, has continuous entries satisfying $|V(x)| \les \la x\ra^{-\beta}$ and there are no embedded eigenvalues in the real line.
	If   $\beta>2$, then
		$$
		\| e^{-itH} \widetilde\chi(H)\la H \ra^{-2-}\|_{L^1\to L^\infty} \les \la t\ra^{-\f12}.
		$$
		Further,  if $0\leq \gamma \leq \f32$ and $\beta>\min(2+2\gamma,3) $, we have
		$$
		\| e^{-itH} \widetilde\chi(H) \la H \ra^{-2-}\|_{L^{1,\gamma}\to L^{\infty,-\gamma}} \les \la t\ra^{-\f12-\gamma}.
		$$ 
\end{theorem}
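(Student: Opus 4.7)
The plan is to represent the evolution via Stone's formula
\[
e^{-itH}\widetilde\chi(H)\la H\ra^{-2-} = \frac{1}{2\pi i}\int_{\R} e^{-it\lambda}\,\widetilde\chi(\lambda)\la\lambda\ra^{-2-}\bigl[\mathcal R_V^+(\lambda)-\mathcal R_V^-(\lambda)\bigr]\,d\lambda,
\]
and to insert a finite symmetric Born expansion
\[
\mathcal R_V^\pm(\lambda) = \sum_{n=0}^{2N+1}(-1)^n \mathcal R_0^\pm(\lambda)\bigl(V\mathcal R_0^\pm(\lambda)\bigr)^n + \bigl(\mathcal R_0^\pm V\bigr)^{N+1}\mathcal R_V^\pm(\lambda)\bigl(V\mathcal R_0^\pm\bigr)^{N+1}.
\]
The first sum consists of explicit free-resolvent chains, while the last term — the Born tail — is where the full perturbed resolvent must be controlled. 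For each chain I would use the Hankel representation $\mathcal R_0^\pm(\lambda)=(D_0+\lambda)R_0^\pm(\lambda^2)$ together with the large-argument asymptotics of $H_0^{(1),(2)}(z)\sim z^{-1/2}e^{\pm iz}$, producing an oscillatory kernel with amplitude $\lambda^{1/2}|x-y|^{-1/2}$ and linear phase $\pm\lambda|x-y|$; this is the source of the $t^{-1/2}$ dispersive rate.

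After substituting these asymptotics, each $n$-th Born term reduces to an oscillatory integral
\[
\int_{\R} e^{i\lambda\bigl(\varepsilon_0|x-y_1|+\cdots+\varepsilon_n|y_n-y|-t\bigr)}\widetilde\chi(\lambda)\la\lambda\ra^{-2-}a_n(\lambda)\,d\lambda,
\]
with $\varepsilon_j\in\{\pm1\}$ and $a_n$ a smooth symbol of order $(n+1)/2$. Because the phase is linear in $\lambda$, integration by parts in $\lambda$ trades each copy of the phase for an inverse factor of $|t-\sum_j \varepsilon_j|y_j-y_{j+1}||$; near the "light cone" where the sum of distances is comparable to $|t|$, the spatial amplitude $\prod_j|y_j-y_{j+1}|^{-1/2}$ directly supplies the $t^{-1/2}$ rate. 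The spatial integrations over the intermediate variables converge thanks to $|V(y_j)|\lesssim\la y_j\ra^{-\beta}$ with $\beta>2$, and for the weighted bound each extra integration by parts contributes $t^{-1}$ at the cost of a spatial factor of size $|x|+|y|+|y_j|$, which is absorbed into the $L^{1,\gamma}\to L^{\infty,-\gamma}$ norm; this accounting is what imposes $\beta>\min(2+2\gamma,3)$.

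The main obstacle is the Born tail. Here I would appeal to the high-energy limiting absorption principle
\[
\bigl\|\la x\ra^{-\sigma}\partial_\lambda^k\mathcal R_V^\pm(\lambda)\la x\ra^{-\sigma}\bigr\|_{L^2\to L^2}\lesssim\la\lambda\ra^{-1-k},\qquad k\geq 0,
\]
whose validity uniformly in $\lambda$ depends precisely on the hypothesis that $H$ has no embedded eigenvalues on the real line: Agmon's compactness bootstrap then produces the needed inverse on weighted $L^2$ throughout the high-energy regime. Choosing $N$ large enough that $(\mathcal R_0^\pm V)^{N+1}$ maps $L^1\to L^2_\sigma$ (and dually $L^2_\sigma\to L^\infty$), and that the cumulative $\lambda^{1/2}$ growth from each free-resolvent factor is dominated by the combined $\la\lambda\ra^{-1-k}$ decay from the LAP and the spectral weight $\la\lambda\ra^{-2-}$, one obtains an absolutely convergent $\lambda$-integrand which, after sufficiently many integrations by parts in $\lambda$, yields the desired $t^{-1/2-\gamma}$ bound. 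The most delicate bookkeeping is ensuring that derivatives falling on $\mathcal R_V^\pm$ interact correctly with the free-resolvent growth rates, so that the full integrand remains integrable after every derivative required by the weight $\gamma$; this is where the $\la H\ra^{-2-}$ smoothing and the choice of sufficiently large $N$ are both essential.
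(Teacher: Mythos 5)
There is a genuine gap in your treatment of the Born tail, and it stems from the claimed decay rate in the limiting absorption principle. For the massless Dirac operator the high-energy LAP is only
$\sup_{\lambda}\|\partial_\lambda^k\mR_V^\pm(\lambda)\|_{L^{2,\sigma+k}\to L^{2,-\sigma-k}}\les 1$;
there is \emph{no} gain of $\la\lambda\ra^{-1-k}$. The rate you quote is the Schr\"odinger one. Since $\mR_0^\pm(\lambda)=(D_0+\lambda)R_0^\pm(\lambda^2)$, the factor $(D_0+\lambda)$ exactly cancels the $\lambda^{-1}$ decay of $R_0^\pm(\lambda^2)$ between weighted $L^2$ spaces, and the same is true for the perturbed resolvent. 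Consequently each factor of $v\mR_0^\pm(\lambda)v^*$ contributes \emph{growth} of order $\lambda^{1/2}$ (your own symbol estimate $a_n=O(\lambda^{(n+1)/2})$ reflects this), and taking $N$ large makes the high-energy integrand worse, not better: already for $n\ge 4$ the explicit Born terms carry amplitude $\lambda^{5/2-2-}=\lambda^{1/2-}$, which is not absolutely integrable and cannot be salvaged by the fixed smoothing $\la H\ra^{-2-}$, and the tail $(\mR_0 V)^{N+1}\mR_V(V\mR_0)^{N+1}$ grows like $\lambda^{N+1}$. So the ``choose $N$ large enough'' step fails; the only viable choice is the minimal one.

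That minimal choice is what the paper does: it iterates the resolvent identity exactly twice, writing $\mR_V=\mR_0-\mR_0V\mR_0+\mR_0V\mR_V V\mR_0$, so that the total free-resolvent growth $\lambda^2$ is just absorbed by $\lambda^{-2-}$. The price is that the outer kernels $\mR_0(\lambda)(x,\cdot)$ are then not locally in $L^2$ (the $|x-\cdot|^{-1}$ singularity of the low-energy piece is borderline in two dimensions), so one cannot immediately pair them with the weighted-$L^2$ LAP. The paper cures this by splitting $\mR_0=\mR_L+\mR_H$ according to the size of $\lambda|x-y|$ and iterating the resolvent identity \emph{once more, only on the terms carrying an} $\mR_L$ \emph{factor}, which regularizes the kernel without introducing uncompensated powers of $\lambda$. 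Your oscillatory-integral treatment of the explicit terms (linear phase, light-cone dichotomy on $|t-\sum_j|y_j-y_{j+1}||$, trading integrations by parts for spatial weights) and your identification of the role of the no-embedded-eigenvalue hypothesis are both correct and match the paper; the proof does not close, however, until the tail is handled with the correct, non-decaying LAP and the selective low/high iteration described above.
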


We note that the assumption of a lack of embedded eigenvalues is not needed for our low energy results in Theorem~\ref{thm:main}, as the spectral properties in a neighborhood of zero are dictated by the threshold behavior.  The lack of embedded eigenvalues has been established in the massive case, \cite{BC1}, and in the massless case for a sufficiently small potential, \cite{CGetal}.

We establish the dispersive bounds by employing the functional calculus for the Dirac operator.  For the class of potentials we consider, $H$ is self-adjoint and the spectrum of $H$ coincides with the real line. Under these circumstances, see \cite{RS1}, the Stone's formula for spectral measures yields: 
\be\label{Stone}
	e^{-itH}P_{ac}(H)= \frac{1}{2\pi i}  \int_{\R} e^{-it\lambda}\big[ \mathcal R_V^+(\lambda)-\mR_V^-(\lambda) \big]f\, d\lambda
\ee
Here the perturbed resolvents are $\mR_V^\pm(\lambda)=\lim_{\epsilon\to 0^+} (D_0+V-(\lambda \pm i\epsilon))^{-1}$, and their difference provides the spectral measure.  We take advantage of the identity \eqref{eqn:resolvdef} to develop the spectral measure from Schr\"odinger resolvents. The Schr\"odinger free resolvent $$R_0^\pm(\lambda ^2)=\lim_{\epsilon\to 0^+}(-\Delta -(\lambda ^2\pm i\epsilon))^{-1}$$ and the perturbed Schr\"odinger resolvent operators $$R_V^\pm(\lambda ^2)=\lim_{\epsilon\to 0^+}(-\Delta+V -(\lambda^2\pm i\epsilon))^{-1}$$
are well-defined as operators between weighted $L^2(\R^2)$ spaces, see \cite{agmon}.  

To the authors' knowledge, this is the first study of dispersive estimates for the two dimensional massless Dirac equation.  A recent paper of Cacciafesta and Ser\'e, \cite{CS} investigated local smoothing estimates for the massless Dirac equation in dimensions two and three.   The massive Dirac has been studied by the first and third author, \cite{egd}, with Toprak \cite{EGT2d}.  The three-dimensional massive Dirac equation is more studied going back to the work of Boussaid \cite{Bouss1}, and D'Ancona and Fanelli, \cite{DF}.   The characterization of threshold obstructions and their effect on the dispersive bounds have recently been studied by the first and third author and Toprak, \cite{EGT}.  Much of the work has roots in the study of other dispersive equations, notably the Schr\"odinger \cite{Mur,Sc2,ES,eg2,eg3,EGG4,ebru} and wave \cite{DF,Gwave,beceanu} equations.

Our low energy results in Theorem~\ref{thm:main} establish the natural time decay $\la t \ra^{-\f12}$ for the Dirac evolution while assuming less decay of the potential than has been required in the massive case.  The improvement comes from using a more delicate argument based on Lipschitz continuity of the spectral measure, rather than direct integration by parts in the Stone's formula. A similar argument was used in \cite{eg3}.

In addition, this is the first result in which all the slow time decay caused by a p-wave resonance is controlled in a finite rank term.  Previous works on the Schr\"odinger or wave equation, \cite{Mur,eg2,Gwave}, did not observe this asymptotic structure.  Even in the weighted $L^2$ setting, \cite{Mur}, finite rank leading order terms had an error whose decay was only logarithmically better.  The method we  develop for computing spectral measures here can recover an analogous result (finite rank leading order, with polynomial decay of the remainder) for the Schr\"odinger evolution as well.

There is also much interest in the study of non-linear Dirac equations.  See \cite{EV,BH,CTS,BC2} for example.  There is a longer history in the study of spectral properties of Dirac operators.  Limiting absorption principles for the Dirac operators have been studied in \cite{Yam,GM,EGG,CGetal}.  In particular, the recent work \cite{EGG} of the authors applies in all dimensions $n\geq 2$ for both massive and massless equations, while the recent work of Carey, et.~ al.~\cite{CGetal} applies to massless equations.  The lack of embedded eigenvalues, singular continuous spectrum and other spectral properties is well established, \cite{BG1,GM,MY,CGetal,BC1}.  In particular, for the class of potentials we consider, the Weyl criterion implies that $\sigma_{ac}(H)=\sigma(D_0)=(-\infty,\infty)$.   There are no embedded eigenvalues provided the potential is small, see Theorem 3.15 in \cite{CGetal}.

The paper is organized as follows.  We begin by proving the natural dispersive estimates for the free massless Dirac operation in Section~\ref{sec:free}.  In Section~\ref{sec:exps} we develop a variety of expansions for the free resolvent that will be needed to study the spectral measure in \eqref{Stone}.  In Section~\ref{sec:zeroregular} we prove Theorem~\ref{thm:main} when zero energy is regular.  In Section~\ref{sec:resolv notfree} we establish more delicate expansions of the perturbed resolvent around the threshold in the presence of resonances and/or eigenvalues so that we may prove Theorem~\ref{thm:main} when the threshold is not regular in Section~\ref{sec:disp notfree}.  In Section~\ref{sec:TC} we provide a characterization of the threshold obstructions that relates them naturally to the various subspaces of $L^2$ that arise in the resolvent expansions. Section~\ref{sec:high energy} provides the high energy estimates to prove Theorem~\ref{thm:main_hi}. Finally, Section~\ref{sec:int ests} contains the various integral estimates needed throughout the paper.

\section{Free Dirac dispersive estimates}\label{sec:free}

Due to the relationship between the massless free Dirac evolution and the
free wave equation, $D_0^2 f=-\Delta f$,
we can expect a natural time decay rate of size $|t|^{-\f12}$ 
as one has in the wave equation (when $m=0$)  provided the initial data has more than $\f 32$ weak derivatives in $L^1(\R^2)$.
In the case of Dirac equation, as in Schr\"odinger equation, the time decay can be improved  at the cost of spatial weights.   

\begin{theorem}\label{thm:free} 
	
	We have the estimate
	$$
		\|  e^{-it D_0} \la D_0 \ra^{-\f32-} 
		\|_{L^1\to L^\infty} \les  t^{-\f12}.
	$$	
	Further, one has
	$$
		\| \la x\ra^{-\gamma}  e^{-it D_0} \la D_0 \ra^{-2-}  \la y\ra^{-\gamma}
		\|_{L^1\to L^\infty} \les \la t\ra^{-\f12-\gamma},
	$$	
	for any $0\leq \gamma \leq \tfrac32$.
	
\end{theorem}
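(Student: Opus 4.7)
\emph{Proof plan.} I would apply Stone's formula,
\begin{equation*}
e^{-itD_0}\la D_0\ra^{-\sigma}(x,y) = \f{1}{2\pi i}\int_\R e^{-it\lambda}(1+\lambda^2)^{-\sigma/2}\big[\mathcal R_0^+(\lambda) - \mathcal R_0^-(\lambda)\big](x,y)\, d\lambda,
\end{equation*}
together with the identity $\mathcal R_0(\lambda)=(D_0+\lambda)R_0(\lambda^2)$ from \eqref{eqn:resolvdef} and the explicit 2D Schr\"odinger resolvent kernel $R_0^\pm(\lambda^2)(x,y) = \pm \f{i}{4}H_0^{(1,2)}(|\lambda||x-y|)$, and then split the $\lambda$-integral via $\chi + \widetilde\chi = 1$ into low- and high-frequency pieces.

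At low energies, the resolvent difference reduces to $\f{i}{2}J_0(|\lambda||x-y|)$, an entire even function of $\lambda$. The low-energy contribution is therefore a Fourier transform in $\lambda$ of a smooth, compactly supported amplitude and decays faster than any polynomial in $|t|$, with any polynomial growth in $|x-y|$ easily absorbed by the spatial weights. The main work is at high frequencies, where I would substitute the Hankel asymptotic $H_0^\pm(s) = \sqrt{2/(\pi s)}\,e^{\pm i(s-\pi/4)}(1 + O(s^{-1}))$. To leading order this produces a one-dimensional oscillatory integral in $\lambda$ of the form
\begin{equation*}
\int_\R e^{i\lambda(|x-y|\mp t)}\,a_\pm(\lambda,|x-y|)\,(1+\lambda^2)^{-\sigma/2}\widetilde\chi(\lambda)\,d\lambda,
\end{equation*}
in which the action of $(D_0+\lambda)$ on the exponential factor produces an amplitude of size $|a_\pm|\les|\lambda|^{1/2}|x-y|^{-1/2}$. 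For $\sigma=\f32+$ the amplitude is integrable in $\lambda$, and the oscillatory-integral estimates collected in Section~\ref{sec:int ests} combine with the $|x-y|^{-1/2}$ gain and the finite-speed-of-propagation effect (contributions from $|x-y|\gg|t|$ decay rapidly after one integration by parts) to yield the pointwise bound $|t|^{-1/2}$ on the kernel.

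For the weighted bound I would exploit the stronger multiplier $\la D_0\ra^{-2-}$ to afford extra integrations by parts in $\lambda$ against the linear phase $e^{i\lambda(|x-y|\mp t)}$. Each such integration by parts produces a factor of $\abs{|x-y|\mp t}^{-1}\les |t|^{-1}$ in the main regime $|x-y|\les |t|$, and a cost of $|x-y|$ from differentiating the exponential phase, which I absorb into $\la x\ra\la y\ra$ using $|x-y|\les \la x\ra\la y\ra$. Thus the integer values $\gamma=0,1,2$ follow directly (with $\gamma=2$ supported by $\la D_0\ra^{-2-}$), and the fractional values in $(0,2)$, which include the claimed range $[0,\f32]$, are recovered by log-convex interpolation of the pointwise kernel bounds (using $\min(U,V)\leq U^{1-\theta}V^\theta$).

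The main technical obstacle is the resonance region $|x-y|\sim|t|$, where the phase $\lambda(|x-y|\mp t)$ is nearly degenerate and naive integration by parts in $\lambda$ fails; this is precisely the regime that sets the $|t|^{-1/2}$ rate, and the required careful balance between the $|x-y|^{-1/2}$ decay from the Hankel asymptotic and the spatial weights must be handled using the one-dimensional stationary-phase / van der Corput lemmas developed in Section~\ref{sec:int ests}.
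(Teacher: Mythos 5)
There is a genuine gap in your treatment of the low-energy piece. You claim that $\chi(\lambda)[\mR_0^+-\mR_0^-](\lambda)(x,y)$ is a smooth, compactly supported amplitude whose Fourier transform in $\lambda$ ``decays faster than any polynomial in $|t|$,'' with the $|x-y|$-growth absorbed by weights. This fails for the first (unweighted) estimate, which carries no spatial weights at all: the amplitude is indeed smooth in $\lambda$, but each $\lambda$-derivative costs a factor of $|x-y|$ (see \eqref{R0pm1}--\eqref{R0pm2}), so the rapid decay is not uniform in $x,y$. In fact the low-energy regime saturates the rate: for $|\lambda|\,|x-y|\gtrsim 1$ the kernel is $e^{\pm i\lambda|x-y|}\widetilde\omega_\pm(\lambda|x-y|)$ with $|\widetilde\omega_\pm|\les |\lambda|(1+|\lambda|\,|x-y|)^{-1/2}$, and on the degenerate set $|x-y|\approx t$ the integral is of size $t^{-1/2}$ --- exactly as in the high-frequency resonance region you do acknowledge. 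The paper's resolution is not stationary phase but the half-Lipschitz bound \eqref{mu0C12}, $|\mu_0(\lambda_1)-\mu_0(\lambda_2)|\les|\lambda_1-\lambda_2|^{1/2}|\lambda_2|^{1/2}$ (uniform in $x,y$), fed into the shift identity $\int e^{-it\lambda}\mu_0\,d\lambda=\frac12\int e^{-it\lambda}\big(\mu_0(\lambda)-\mu_0(\lambda-\frac\pi t)\big)d\lambda$ as in \eqref{lip12trick}; the weighted cases then come from interpolating with the two-fold integration by parts \eqref{alpha32}, which is also why the weights cap at $\gamma=\tfrac32$ (only $\la x\ra^{3/2}\la y\ra^{3/2}$ of growth is available, not ``any polynomial''). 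Either this Lipschitz argument or a genuine stationary-phase analysis of the oscillatory regime $|\lambda|\,|x-y|\gtrsim1$ must be supplied for the low-energy part; your proposal supplies neither.

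Your high-frequency outline is essentially the paper's (which delegates to Lemmas~3.2 and~5.3 of \cite{Gwave}), with two caveats. First, Section~\ref{sec:int ests} contains only a logarithmic time-decay lemma and spatial convolution estimates --- no van der Corput or stationary-phase lemmas --- so the oscillatory-integral machinery you invoke is not actually ``collected'' there. Second, after extracting the phase $e^{\pm i\lambda|x-y|}$, integration by parts against $e^{i\lambda(|x-y|\mp t)}$ costs a factor $\big||x-y|\mp t\big|^{-1}$, which is $\les|t|^{-1}$ only away from the resonance set $|x-y|\approx t$, not on all of $|x-y|\les|t|$; differentiating the remaining amplitude $\omega_\pm$ gains (not costs) by \eqref{R0large}. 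These are repairable, but the low-energy gap above is the substantive one.
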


The proof of this theorem is based on asymptotic expansions of the spectral measure of the free Dirac operator, both at low energies and high energies. To best utilize these expansions, we employ the notation
$$
f(\lambda )=\widetilde O(g(\lambda ))
$$
to denote
$$
\frac{d^j}{d\lambda ^j} f = O\big(\frac{d^j}{d\lambda ^j} g\big),\,\,\,\,\,j=0,1,2,3,...
$$
The notation primarily refers to derivatives with respect to the spectral variable $\lambda $  in the expansions for
the integral kernel of the free resolvent operator.  In the context of \eqref{eqn:resolvdef}, due to the gradient, we use the $\widetilde O(g)$ to refer to $|x-y|$ as well.
If the derivative bounds hold only for the first $k$ derivatives we  write $f=\widetilde O_k (g)$.  In addition, if we write $f=\widetilde O_k(1)$, we mean that differentiation up to order $k$ is comparable to division by $\lambda $ and/or $|x-y|$.
This notation applies to operators as well as
scalar functions; the meaning should be clear from the
context.

\begin{proof}[Proof of Theorem \ref{thm:free}]  
First note that   in the free case the Stone's formula, \eqref{Stone}, is
\begin{align}\label{eq:Stone}
	e^{-itD_0} =\int_{\R}  e^{-it\lambda}[\mathcal R_0^+ -\mR_0^-](\lambda)(x,y) \, d\lambda.
\end{align}
We consider the low energy first. Using \eqref{eq:dr1}, the formula  $[R_0^+-R_0^-](\lambda^2)(x,y)=\frac{i}{2}J_0(\lambda |x-y|)$, and
the asymptotics for the Bessel functions, see \cite{AS,Sc2},  we can write 
\begin{multline}\label{R0pm}
[\mathcal R_0^+ -\mR_0^-](\lambda)(x,y)=
	(-i\alpha \cdot \nabla +\lambda)[R_0^+
	-R_0^-](\lambda^2)(x,y) \\ =	\left\{\begin{array}{ll}\frac{i 
	\lambda}2-\frac{\lambda^2}4 \alpha\cdot(x-y)  +\widetilde O_2( \lambda^3 |x-y|^2 ), & |\lambda|\,|x-y|\ll 1\\
	e^{i\lambda|x-y|}\widetilde \omega_+(\lambda|x-y|)
	+e^{-i\lambda|x-y|}\widetilde \omega_-(\lambda |x-y|), & |\lambda|\,|x-y|\gtrsim 1
	\end{array}
	\right.
\end{multline}
where  $\widetilde \omega_\pm(\lambda|x-y|)$ satisfies 
$$
	\widetilde \omega_\pm(\lambda|x-y|)=  \widetilde O \big(|\lambda| (1 + |\lambda| |x-y|)^{-\f12}\big). 
$$

 Let  $\mu_0(\lambda)(x,y):=\chi(\lambda)[\mathcal R_0^+ -\mR_0^-](\lambda)(x,y)$.
The formula \eqref{R0pm} implies that 
\be \label{R0pm0}
|\mu_0(\lambda)(x,y)|\les  |\lambda| (1+ |\lambda||x-y|)^{-\frac12}, 
\ee
\be \label{R0pm1}
|\partial_\lambda \mu_0(\lambda)(x,y)|\les  (1+ |\lambda||x-y|)^{\frac12}, 
\ee
\be \label{R0pm2}
|\partial^2_\lambda \mu_0(\lambda)(x,y)|\les  |x-y| (1+ |\lambda||x-y|)^{\frac12}.
\ee
Thus, using \eqref{R0pm0} and \eqref{R0pm1} we have 
\be\label{mu0C12}
|\mu_0(\lambda_1)(x,y)-\mu_0(\lambda_2)(x,y)|\les |\lambda_1-\lambda_2|^\frac12  |\lambda_2|^{\frac12},
\ee
for $|\lambda_1|\leq|\lambda_2|\les 1$.  To obtain this consider the cases $|\lambda_1-\lambda_2| \approx |\lambda_2|$ and $|\lambda_1-\lambda_2| \ll |\lambda_2|$
 separately. In the former case  the bound follows from \eqref{R0pm0}. In the latter case, the mean value theorem and \eqref{R0pm1} give  the bound  $|\lambda_1-\lambda_2|   (1+ |\lambda_2||x-y|)^{\frac12}$. Interpolating this with \eqref{R0pm0}  and noting that $|\lambda_1|\approx|\lambda_2|$,  we obtain \eqref{mu0C12}.  
 
 We also state two other bounds for $\mu_0$ which will be useful in later sections.
 The interpolation argument above also implies that
\be\label{mu0C12alpha}
|\mu_0(\lambda_1)(x,y)-\mu_0(\lambda_2)(x,y)|\les |\lambda_1-\lambda_2|^{\frac12+\gamma}  |\lambda_2|^{\frac12-\gamma} \la x-y\ra^\gamma,\,\,\,\,0\leq \gamma \leq \frac12.
\ee
 Similarly, using \eqref{R0pm1} and \eqref{R0pm2} we obtain the bound
\be\label{mu0C32}
|\partial_\lambda \mu_0(\lambda_1)(x,y)-\partial_\lambda \mu_0(\lambda_2)(x,y)|\les |\lambda_1-\lambda_2|^\gamma |x-y|^{\gamma} (1+|\lambda_2||x-y|)^{\frac12},\,\,\,0\leq \gamma\leq 1.
\ee

Using the support of $\chi(\lambda)$ in the definition of $\mu_0$,
it is easy to see that
$$\Big|\int_{\R}  e^{-it\lambda}   \mu_0(\lambda)(x,y) \, d\lambda \Big|\les 1.$$
For $|t|\gtrsim 1$, again using the support of $\chi(\lambda)$ and \eqref{mu0C12}, we have 
\begin{multline}\label{lip12trick}
\Big|\int_{\R}  e^{-it\lambda}   \mu_0(\lambda)(x,y) \, d\lambda \Big|  =\frac12 \Big|\int_\R e^{-it\lambda} ( \mu_0(\lambda)(x,y)- \mu_0(\lambda-\frac\pi{t})(x,y))d\lambda\Big| \\ \les |t|^{-\frac12}
  \int_{-1}^1 1\, d\lambda \les |t|^{-\frac12}.
\end{multline}
For the weighted bounds,  after two integration by parts, we have
 \be  \label{alpha32}
\Big|\int_{\R}  e^{-it\lambda}   \mu_0(\lambda)  \, d\lambda \Big|  =\frac1{t^2}\Big|\int_{\R}  e^{-it\lambda} \partial^2_\lambda \mu_0 (\lambda)  \, d\lambda   \Big|    \les \frac1{t^2} \la x\ra^{\frac32} \la y\ra^{\frac32}. 
\ee
Interpolating these bounds we conclude for any $\gamma\in[0,\frac32]$ that
$$\Big|\int_{\R}  e^{-it\lambda} \chi(\lambda) \mu_0(\lambda)(x,y) \, d\lambda \Big|\les \la t \ra^{-\frac12} \Big(\frac{\la x\ra \la y\ra}{\la t\ra}\Big)^\gamma.$$

For large energies, to prove the first claim it suffices to bound
\be\label{eq:high free}
	\sup_{L\geq 1} \int_{-\infty}^{\infty}  e^{-it\lambda} \lambda^{-\f32-} \widetilde \chi(\lambda) \chi(\lambda/L) [\mR _0^+ -\mR _0^-](\lambda)(x,y)\, d\lambda.
\ee
Noting that $\mR _0^+ -\mR _0^-=[D_0+\lambda ]J_0(\lambda|x-y|)$ is comparable to $\lambda J_0(\lambda |x-y|)$, see \eqref{R0pm} and \cite{AS}. Using Lemmas~3.2 and 5.3 in \cite{Gwave}, we have the bounds
$$
	|\eqref{eq:high free}|\les \left\{
	\begin{array}{l}
		t^{-\f12}\\
		\frac{\la x\ra^{\f12} \la y \ra^{\f12}}{t^{\f32}}+\frac{1}{t^2}
	\end{array}
	\right. \qquad t>2.
$$
However these estimates rely on oscillation that may not be present when $t$ is small.  To obtain a uniform bound for small times, the integrand must be absolutely convergent.  Given the growth of $|\omega_\pm(\lambda|x-y|)|\les |\lambda| $, we need a multiplier that decays like $|\lambda|^{-2-}$ to conclude
$$
	\sup_{L\geq 1} \bigg|\int_{-\infty}^{\infty}  \lambda^{-2-} \widetilde \chi(\lambda) \chi(\lambda/L) [\mR _0^+ -\mR _0^-](\lambda)(x,y)\, d\lambda
	\bigg| \les 1
$$
uniformly in $x$ and $y$ for small $t$.  The additional powers of $\lambda$ correspond to extra mollification in the $x$ variable, using $\la D_0\ra^{-2-}$ instead of $\la D_0\ra^{-\frac32-}$.
\end{proof}

\section{Free resolvent expansions around zero energy}\label{sec:exps}

In this section we study the behavior of the free  Dirac resolvent more carefully by using the properties of   free Schr\"odinger resolvent $R_0(\lambda )=(-\Delta-\lambda )^{-1}$.    
Following \cite{Sc2,eg2,eg3,EGT2d}, we have the following expansion for the Schr\"odinger resolvent.  These results have their roots in work of Jensen and Nenciu, \cite{JN}.
\begin{lemma} \label{lem:schro_resol} Let $0<\lambda \ll 1$. For $\lambda |x-y|<1$, we have the expansions
	\begin{multline}\label{eq:freschroresolv}
		R_0^{\pm}(\lambda ^2) =g^\pm(\lambda )+G_0
		+  \widetilde 
		O_2(\lambda ^2|x-y|^2\log(\lambda |x-y|))\\
		 =g^\pm(\lambda )+G_0
		+g_1^\pm(\lambda ) G_1+\lambda ^2 G_2+ \widetilde 
		O_3(\lambda ^4|x-y|^4\log(\lambda |x-y|)),
	\end{multline}
 where
\begin{align}
	g^\pm(\lambda )&=-\frac1{2\pi} \big(\log(\lambda /2)+\gamma\big)\pm\frac{i}4 \label{g def}\\
	g_1^\pm(\lambda )&=-\frac{\lambda ^2}4g^\pm(\lambda )-\frac{\lambda ^2}{8\pi}  \label{g1 def}\\
	G_0f(x)&=-\frac{1}{2\pi}\int_{\R^2} \log|x-y|f(y)\,dy, \label{G0 def}\\
	G_1f(x)&=\int_{\R^2} |x-y|^2f(y)\, dy,\label{G1 def}\\
	G_2f(x)&= \frac1{8\pi}\int_{\R^2} |x-y|^2\log|x-y|f(x)\, dy.\label{G2 def}
\end{align}
For $\lambda |x-y|\gtrsim 1$, we have 
\be \label{R0large}
	R_0^\pm(\lambda ^2)(x,y)=   e^{\pm i\lambda |x-y|} \omega_\pm(\lambda |x-y|), \,\,\,\,\,\,
	|\omega_{\pm}^{(j)}(y)|\lesssim (1+|y|)^{-\frac{1}{2}-j},\,\,\,j=0,1,2,\ldots.
\ee
\end{lemma}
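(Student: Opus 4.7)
The plan is to start from the explicit kernel representation of the free 2D Schrödinger resolvent in terms of Hankel functions and then expand it in the two regimes $\lambda|x-y|<1$ and $\lambda|x-y|\gtrsim 1$.  Recall the identity
\[
R_0^{\pm}(\lambda^2)(x,y)=\pm\tfrac{i}{4}H_0^{(1,2)}(\lambda|x-y|),
\]
together with $H_0^{(1)}=J_0+iY_0$ and the classical series
\[
J_0(z)=\sum_{k=0}^\infty\frac{(-1)^k}{(k!)^2}\Bigl(\tfrac{z}{2}\Bigr)^{2k},\qquad
Y_0(z)=\tfrac{2}{\pi}\bigl[J_0(z)\bigl(\log(z/2)+\gamma\bigr)-\sum_{k=1}^\infty\frac{(-1)^{k+1}}{(k!)^2}\Bigl(\tfrac{z}{2}\Bigr)^{2k}h_k\bigr],
\]
with $h_k=1+\tfrac12+\dots+\tfrac1k$.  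Substituting $z=\lambda|x-y|$ and using $\log(\lambda|x-y|/2)=\log(\lambda/2)+\log|x-y|$ cleanly separates the purely spectral piece from the purely spatial piece.

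For the first stated expansion I would extract the $k=0$ terms, which give the kernel $-\tfrac{1}{2\pi}(\log(\lambda/2)+\gamma)\pm\tfrac{i}{4}$ (matching $g^\pm(\lambda)$) plus $-\tfrac{1}{2\pi}\log|x-y|$ (matching the kernel of $G_0$), and then collect the $k\geq 1$ terms into a remainder.  For the more refined expansion I would also extract the $k=1$ contribution; a direct computation shows it equals exactly $g_1^\pm(\lambda)|x-y|^2+\lambda^2\cdot\tfrac{1}{8\pi}|x-y|^2\log|x-y|$, matching $g_1^\pm(\lambda)G_1+\lambda^2 G_2$ after identifying the constants $-\tfrac{\lambda^2}{4}g^\pm(\lambda)-\tfrac{\lambda^2}{8\pi}$.

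The tail of the series has the form $\sum_{k\geq k_0}c_k(z/2)^{2k}[\log(z/2)+\gamma-h_k]$ and so, with $z=\lambda|x-y|$, is bounded by $(\lambda|x-y|)^{2k_0}|\log(\lambda|x-y|)|$ in the regime $\lambda|x-y|<1$.  To upgrade these to the $\widetilde O_k$ bounds I would differentiate the series termwise in $\lambda$: each $\partial_\lambda$ either picks up a factor of $1/\lambda$ (from differentiating $\log\lambda$) or lowers the power of $z^{2k}$ by one, i.e.\ each $\partial_\lambda$ is comparable to $\lambda^{-1}$.  The analogous statement for derivatives in $|x-y|$ is identical.  This gives $\widetilde O_2(\lambda^2|x-y|^2\log(\lambda|x-y|))$ for the two-term expansion and $\widetilde O_3(\lambda^4|x-y|^4\log(\lambda|x-y|))$ for the four-term expansion.

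For $\lambda|x-y|\gtrsim 1$, I would invoke the standard asymptotic expansion
\[
H_0^{(1,2)}(z)=e^{\pm iz}\,\tilde\omega_\pm(z),\qquad \tilde\omega_\pm^{(j)}(z)=O\bigl((1+|z|)^{-1/2-j}\bigr),
\]
valid for $z\geq c>0$ (with the full asymptotic series giving the derivative bounds), and write $R_0^\pm(\lambda^2)(x,y)=e^{\pm i\lambda|x-y|}\omega_\pm(\lambda|x-y|)$ with $\omega_\pm=\pm\tfrac{i}{4}\tilde\omega_\pm$.  The main technical obstacle is really just bookkeeping: ensuring that the logarithmic factors in the small-argument remainder behave consistently under differentiation so as to fit the $\widetilde O_k$ framework, and cross-checking constants between the Bessel-function series and the explicit formulas for $g^\pm$, $g_1^\pm$, $G_0$, $G_1$, $G_2$.
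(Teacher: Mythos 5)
Your proposal is correct and follows exactly the standard route: the paper does not prove this lemma itself but cites \cite{Sc2,eg2,eg3,EGT2d}, and those references derive the expansion precisely as you do, from $R_0^{\pm}(\lambda^2)(x,y)=\pm\frac{i}{4}H_0^{(1,2)}(\lambda|x-y|)$ together with the power series for $J_0$, $Y_0$ at small argument and the Hankel asymptotics at large argument. Your constant check for the $k=1$ term (yielding $g_1^\pm(\lambda)G_1+\lambda^2 G_2$) and your observation that each $\lambda$-derivative of the tail costs a factor of $\lambda^{-1}$ (matching the paper's $\widetilde O_k$ convention) are both accurate.
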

Using \eqref{eqn:resolvdef} we have
\be\label{eq:dr1}
	\mR_0^\pm(\lambda)=\left[-i\alpha \cdot \nabla+ \lambda I\right]R_0^\pm(\lambda^2)
\ee
We write (for $|\lambda|\, |x-y|\ll 1$)
\begin{multline}
 \label{eq:R0low}
	\mR_0^\pm(\lambda)=\mG_{0,0}+\lambda g^\pm(\lambda) \mG_{1,1}+\lambda \mG_{1,0}+g_1^\pm(\lambda) \mG_{2,1}+\lambda^2\mG_{2,0}\\
	  +    \widetilde O_2(\lambda^3 |x-y|^2 \log(\lambda |x-y|)  )
\end{multline}
where
\begin{align}
	&\mG_{0,0}=\mR_0(0)=-i\alpha \cdot \nabla  G_0(x,y)=\frac{i\alpha \cdot(x-y)}{2\pi |x-y|^2}\label{G00def}\\
	&\mG_{1,1}=1\label{G11def}\\
	&\mG_{1,0}=G_0(x,y)=-\frac{1}{2 \pi}\log|x-y|=(-\Delta)^{-1}(x,y) \label{G10def}    \\
	&\mG_{2,1}=-i\alpha \cdot \nabla G_1(x,y)=-2 i\alpha \cdot(x-y)\label{G21def}\\
	&\mG_{2,0}=-i\alpha \cdot \nabla G_2(x,y)=-\frac{i\alpha \cdot \nabla (|x-y|^2 \log |x-y|)}{8\pi }\label{G20def} 
	\end{align}
	The subscripts indicate the powers of $\lambda$ and $\log \lambda$ multiplying each operator in \eqref{eq:R0low}.

\begin{lemma}\label{lem:R0simpleee} We have
 \be\label{eq:R0simple}
\mR_0^\pm(\lambda)(x,y)=\mG_{0,0}+O\big(|\lambda| (1+ (|\lambda|\,|x-y|)^{0-})\big)=O\Big(\lambda+\frac1{|x-y|}\Big).
\ee
\be \label{R01}
|\partial_\lambda \mR_0^\pm(\lambda)(x,y)|\les   (|\lambda||x-y|)^{0-}   + (|\lambda||x-y|)^{\frac12}, 
\ee 
\be \label{R02}
|\partial^2_\lambda \mR_0^\pm(\lambda)(x,y)|\les  |\lambda|^{-1} (|\lambda||x-y|)^{0-} + |\lambda|^{\frac12}|x-y|^{\frac32}   .
\ee  
\end{lemma}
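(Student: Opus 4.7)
The plan is to split the analysis by the size of $|\lambda||x-y|$ and apply the corresponding Schr\"odinger resolvent expansion from Lemma~\ref{lem:schro_resol} through the identity \eqref{eq:dr1}. In the low regime $|\lambda||x-y|\les 1$, I begin with \eqref{eq:R0low}. The critical observation is that the separate singular factors $\log\lambda$ in $g^\pm(\lambda)$ and $\log|x-y|$ in $\mG_{1,0}$ must be grouped before estimation: using \eqref{g def} and \eqref{G10def},
$$
\lambda g^\pm(\lambda)\mG_{1,1} + \lambda \mG_{1,0} = -\f{\lambda}{2\pi}\log\f{\lambda|x-y|}{2} + O(\lambda),
$$
whose modulus is $\les |\lambda|(1+(|\lambda||x-y|)^{0-})$ since $|\log z|\les z^{-\eps}$ for any $\eps>0$ when $0<z\les 1$. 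The analogous cancellation one order higher gives
$$
g_1^\pm(\lambda)\mG_{2,1} + \lambda^2 \mG_{2,0} = -\f{i\lambda^2}{4\pi}\alpha\cdot(x-y)\log(\lambda|x-y|) + O(\lambda^2|x-y|),
$$
which in the low regime is also $\les |\lambda|(|\lambda||x-y|)^{0-}$, as is the $\widetilde O_2$ error term in \eqref{eq:R0low}. Subtracting $\mG_{0,0}$ from both sides of \eqref{eq:R0low} then yields the first equality in \eqref{eq:R0simple} when $|\lambda||x-y|\les 1$.

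In the complementary regime $|\lambda||x-y|\gtrsim 1$, I apply \eqref{eq:dr1} to \eqref{R0large}: since $\nabla_x e^{\pm i\lambda|x-y|}$ produces a factor of $\lambda$ and $\omega_\pm^{(j)}$ decays like $(1+|y|)^{-\f12-j}$, the Dirac resolvent takes the form $\mR_0^\pm(\lambda)(x,y) = e^{\pm i\lambda|x-y|}\lambda A_\pm(\lambda|x-y|)$ with $|A_\pm^{(j)}(z)|\les (1+z)^{-\f12-j}$. Then $|\mR_0^\pm|\les |\lambda|(|\lambda||x-y|)^{-\f12}\les |\lambda|$ and $|\mG_{0,0}|\les 1/|x-y|\les |\lambda|$, so $|\mR_0^\pm - \mG_{0,0}|\les |\lambda|$, matching the desired bound because $(|\lambda||x-y|)^{0-}$ is bounded when $|\lambda||x-y|\gtrsim 1$. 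The second assertion $\mR_0^\pm = O(\lambda + 1/|x-y|)$ then follows by applying Young's inequality $|\lambda|^{1-\eps}|x-y|^{-\eps}\les |\lambda| + 1/|x-y|$ to $|\lambda|(|\lambda||x-y|)^{-\eps}$ and adding the $|\mG_{0,0}|$ contribution.

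For \eqref{R01} and \eqref{R02}, I differentiate the combined low-regime forms: $\partial_\lambda[\lambda g^\pm(\lambda) + \lambda\mG_{1,0}] = -\log(\lambda|x-y|)/(2\pi) + O(1)$, of size $\les (|\lambda||x-y|)^{0-}$; the next-order combined contribution is $O(|\lambda||x-y|(|\lambda||x-y|)^{0-})$, dominated by the first since $|\lambda||x-y|\les 1$. A second derivative gives $\partial_\lambda^2[\lambda g^\pm(\lambda) + \lambda \mG_{1,0}] = -1/(2\pi\lambda)$, supplying the dominant $|\lambda|^{-1}(|\lambda||x-y|)^{0-}$ term in \eqref{R02}; the remaining pieces ($O(|x-y|(|\lambda||x-y|)^{0-})$ and smaller) are controlled by the same quantity when $|\lambda||x-y|\les 1$. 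In the high regime I differentiate $e^{\pm i\lambda|x-y|}\lambda A_\pm(\lambda|x-y|)$ directly: each $\partial_\lambda$ either brings down $|x-y|$ from the exponential or produces $A_\pm^{(j)}$ with a chain-rule $|x-y|$, and the $(1+|\lambda||x-y|)^{-\f12-j}$ decay yields the sizes $(|\lambda||x-y|)^{\f12}$ and $|\lambda|^{\f12}|x-y|^{\f32}$ respectively, matching the second terms in \eqref{R01} and \eqref{R02}.

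The only delicate point is the log combination. Estimated separately, the $|\lambda||\log\lambda|$ contribution from $\lambda g^\pm(\lambda)\mG_{1,1}$ alone would violate the target $|\lambda|(1+(|\lambda||x-y|)^{0-})$ when $|x-y|$ is large and $|\lambda|$ is comparable to $1/|x-y|$. The symmetric appearance of $\log\lambda$ and $\log|x-y|$ in \eqref{eq:R0low}, paired with matching $\lambda$ and $\lambda^2$ prefactors, allows regrouping into $\log(\lambda|x-y|)$, after which the remaining analysis reduces to routine bookkeeping using $|\log z|\les z^{0-}$ for $0<z\les 1$.
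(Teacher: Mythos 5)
Your proof is correct and follows essentially the same route as the paper: the low regime $|\lambda||x-y|\ll 1$ is handled via the expansion \eqref{eq:R0low} and the high regime via the oscillatory representation $e^{\pm i\lambda|x-y|}\widetilde\omega_\pm(\lambda|x-y|)$ with $\widetilde\omega_\pm=\widetilde O(|\lambda|(1+|\lambda||x-y|)^{-\f12})$. The one thing you make explicit that the paper's terse ``follows from \eqref{eq:R0low}'' leaves implicit is the necessary regrouping of $\log\lambda$ and $\log|x-y|$ into $\log(\lambda|x-y|)$ before estimating, and your observation that the bound would otherwise fail near $|\lambda|\sim 1/|x-y|$ is accurate.
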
  
\begin{proof}	The expansions follow from \eqref{eq:R0low} when   $|\lambda|\,|x-y|\ll 1$. Recall that when $|\lambda|\, |x-y|\gtrsim 1$, we have the representation
$$
 \mathcal R_0^\pm (\lambda)(x,y)= 
	e^{\pm i\lambda|x-y|}\widetilde \omega_\pm(\lambda|x-y|),$$
where  $\widetilde \omega_\pm(\lambda|x-y|)$ satisfies 
\be \label{eq:omegatilde}
	\widetilde \omega_\pm(\lambda|x-y|)=  \widetilde O \big(|\lambda| (1 + |\lambda| |x-y|)^{-\f12}\big). 
\ee
Also using \eqref{G00def}, the error in \eqref{eq:R0simple} is bounded by
$$
	\widetilde\chi(\lambda|x-y|)\big(|\lambda|^\frac12|x-y|^{-\frac12}  +|x-y|^{-1}\big)\les |\lambda|.
$$	 
The bounds \eqref{R01} and \eqref{R02} for $|\lambda|\, |x-y|\gtrsim 1$ follow similarly using 
the high energy representation of  $\mathcal R_0^\pm $ above.
\end{proof}

As a corollary we have the following Lipschitz bounds. The $\frac12$-Lipschitz bound cannot be improved without growth in $|x-y|$, which leads to weights in the dispersive bounds,   due  to the large $\lambda|x-y|$ term.
\begin{corollary}\label{cor:lipbounds} For $|\lambda_1|\leq |\lambda_2|\les 1 $, we have
\be\label{R0lip12}
\big|\mR_0^\pm(\lambda_1)(x,y)-\mR_0^\pm(\lambda_2)(x,y)\big|\les |\lambda_1-\lambda_2|^{\frac12} |\lambda_2|^{\frac12-} (1+|x-y|^{0-}),
\ee
and more generally 
\begin{multline}\label{R0lip12alpha}
\big|\mR_0^\pm(\lambda_1)(x,y)-\mR_0^\pm(\lambda_2)(x,y)\big|\les \\ |\lambda_1-\lambda_2|^{\frac12+\gamma} |\lambda_2|^{\frac12-\gamma-} (|x-y|^{\gamma}+|x-y|^{0-}), \,\,\,\,0\leq\gamma<\frac12.
\end{multline}
Moreover for each $0\leq \gamma\leq 1$, we have 
\begin{multline}\label{R0lip1alpha}
\big|\partial_{\lambda}\mR_0^\pm(\lambda_1)(x,y)-\partial_\lambda\mR_0^\pm(\lambda_2)(x,y)\big|\les \\ |\lambda_1-\lambda_2|^{\gamma} |\lambda_1|^{-\gamma-} \big(  |x-y|^{0-} +|x-y|^{\frac12+\gamma}\big),\,\,\,\,0\leq \gamma\leq 1.
\end{multline}
\end{corollary}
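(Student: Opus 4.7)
The plan is to mimic the interpolation argument used in the proof of Theorem~\ref{thm:free} for the bounds \eqref{mu0C12}, \eqref{mu0C12alpha} and \eqref{mu0C32} on the free spectral measure $\mu_0$. The role of \eqref{R0pm0}--\eqref{R0pm2} is now played by the pointwise estimates \eqref{eq:R0simple}, \eqref{R01}, \eqref{R02} from Lemma~\ref{lem:R0simpleee}. Throughout fix $|\lambda_1|\leq|\lambda_2|\les 1$.

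To prove \eqref{R0lip12} I split into two regimes. If $|\lambda_1-\lambda_2|\approx|\lambda_2|$, applying \eqref{eq:R0simple} to each of $\mR_0^\pm(\lambda_1)$ and $\mR_0^\pm(\lambda_2)$ and using that the $\mG_{0,0}$ piece is $\lambda$-independent and cancels, I get the direct bound $|\mR_0^\pm(\lambda_1)-\mR_0^\pm(\lambda_2)|\les |\lambda_2|(1+(|\lambda_2||x-y|)^{0-})$ in the small-product regime and $|\lambda_2|^{1/2}|x-y|^{-1/2}$ in the large-product regime; both are immediately within the target $|\lambda_1-\lambda_2|^{1/2}|\lambda_2|^{1/2-}(1+|x-y|^{0-})$. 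If instead $|\lambda_1-\lambda_2|\ll|\lambda_2|$, then $|\lambda|\approx|\lambda_2|$ on the segment joining the two points, and the mean value theorem combined with \eqref{R01} gives the MVT bound $|\lambda_1-\lambda_2|\bigl[(|\lambda_2||x-y|)^{0-}+(|\lambda_2||x-y|)^{1/2}\bigr]$. Taking the geometric mean of the two bounds with equal weights $\tfrac12$ produces $|\lambda_1-\lambda_2|^{1/2}|\lambda_2|^{1/2-}(1+|x-y|^{0-})$: in the small-product regime the spatial factor collapses to $(|\lambda_2||x-y|)^{0-}$, and in the large-product regime the factors $|x-y|^{-1/2}$ and $|x-y|^{1/2}$ exactly cancel.

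For the sharper H\"older bound \eqref{R0lip12alpha} the same two inputs are interpolated with unequal weights $(\tfrac12-\gamma,\tfrac12+\gamma)$ on the direct and MVT bounds respectively. This raises the H\"older exponent in $|\lambda_1-\lambda_2|$ to $\tfrac12+\gamma$ and, in the regime $|\lambda_2||x-y|\gtrsim 1$, converts the spatial factor from $1$ to $|x-y|^\gamma$ (while the small-product regime still yields the $|x-y|^{0-}$ summand). The restriction $\gamma<\tfrac12$ is exactly what is needed to keep the direct-bound weight $\tfrac12-\gamma$ strictly positive.

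For \eqref{R0lip1alpha} the same template is applied to $\partial_\lambda\mR_0^\pm$: the direct input is \eqref{R01}, namely $(|\lambda_1||x-y|)^{0-}+(|\lambda_2||x-y|)^{1/2}$, and the MVT input is \eqref{R02}, namely $|\lambda_1-\lambda_2|\bigl[|\lambda_1|^{-1}(|\lambda_1||x-y|)^{0-}+|\lambda_2|^{1/2}|x-y|^{3/2}\bigr]$. Interpolating with weights $(1-\gamma,\gamma)$ yields $|\lambda_1-\lambda_2|^\gamma|\lambda_1|^{-\gamma-}(|x-y|^{0-}+|x-y|^{1/2+\gamma})$, valid for every $\gamma\in[0,1]$. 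The only real obstacle is bookkeeping: each input bound carries two summands reflecting the two asymptotic regimes $|\lambda||x-y|\les 1$ and $|\lambda||x-y|\gtrsim 1$, and one must check that every cross term arising in the geometric mean is dominated by one of the two summands in the target. Since Lemma~\ref{lem:R0simpleee} already writes the estimates in the uniform additive form $(|\lambda||x-y|)^{0-}+(|\lambda||x-y|)^{s}$, this verification reduces to two model subcases and no new ideas are required.
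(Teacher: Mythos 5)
Your proposal is correct and follows essentially the same route as the paper's proof: split according to whether $|\lambda_1-\lambda_2|\gtrsim|\lambda_2|$ (where the difference is bounded directly via \eqref{eq:R0simple}, with the $\lambda$-independent $\mG_{0,0}$ term cancelling) or $|\lambda_1-\lambda_2|\ll|\lambda_2|$ (where the mean value theorem with \eqref{R01}, resp.\ \eqref{R02}, applies), and then interpolate the two resulting bounds with the appropriate weights. The only difference is that you spell out the interpolation weights and the cross-term bookkeeping more explicitly than the paper does.
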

\begin{proof} Note that \eqref{R0lip12} follows from \eqref{R0lip12alpha} with $\gamma=0$. 
When $|\lambda_1-\lambda_2|\gtrsim  |\lambda_2| $, the bound \eqref{R0lip12alpha} follows from \eqref{eq:R0simple} since the leading term $\mG_{0,0}$ cancels out. When $|\lambda_1-\lambda_2|\ll |\lambda_2|$, using  the mean value theorem, \eqref{R01},  and noting that  $|\lambda_1|\approx|\lambda_2|$, we obtain
$$
\big|\mR_0^\pm(\lambda_1)(x,y)-\mR_0^\pm(\lambda_2)(x,y)\big|\les |\lambda_1-\lambda_2|
 \big((|\lambda_2||x-y|)^{0-}   + (|\lambda_2||x-y|)^{\frac12}\big).
$$
Also note that  when $|\lambda_2||x-y|\gtrsim 1$,
$$
\big|\mR_0^\pm(\lambda_1)(x,y)-\mR_0^\pm(\lambda_2)(x,y)\big|\les |\lambda_2|^\frac12  |x-y|^{-\frac12},
$$
 and when $|\lambda_2||x-y|\ll 1$ by \eqref{eq:R0simple}
$$
\big|\mR_0^\pm(\lambda_1)(x,y)-\mR_0^\pm(\lambda_2)(x,y)\big|\les |\lambda_2| \big(1+ (\lambda_2|x-y|)^{0-}\big).
$$
Interpolating these   bounds, we obtain   \eqref{R0lip12alpha}.
The proof of \eqref{R0lip1alpha} is   similar using \eqref{R01} and \eqref{R02}.
\end{proof}

In the case when zero is not regular, we will need a further expansion of $\mR_0^\pm$: 
\begin{lemma}\label{lem:R0E0}
 We have the expansion for the kernel of the free resolvent
	\be\label{eqn:R0E0}
		\mR_0^\pm(\lambda)(x,y)=\mG_{0,0}(x,y)+\lambda g^\pm(\lambda) \mG_{1,1}(x,y)+\lambda \mG_{1,0}(x,y)+E_0^\pm(\lambda)(x,y).
	\ee
	Further, when $|\lambda|\leq 1$, the error term satisfies  
$$
		|E_0^\pm(\lambda)(x,y)| \les 
			|\lambda|  (|\lambda|\la x-y\ra)^{k},\,\,\,0<k<1. 
$$
Moreover, for  $0\leq \gamma<\frac12$ and  $|\lambda_1|\leq |\lambda_2|\les 1$, we have
$$
|E_0^\pm(\lambda_2)(x,y)-E_0^\pm(\lambda_1)(x,y)|\les  |\lambda_1-\lambda_2|^{\frac12+\gamma}  |\lambda_2|^{\frac12-\gamma+k} \la x-y\ra^k,\,\,\,\tfrac12\leq k < 1.
$$	 
\end{lemma}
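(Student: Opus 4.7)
The plan is to split into a low regime $|\lambda|\,|x-y|\les 1$ and a high regime $|\lambda|\,|x-y|\gtrsim 1$, writing
\[
  E_0^\pm(\lambda)(x,y)=\mR_0^\pm(\lambda)(x,y)-\mG_{0,0}(x,y)-\lambda\bigl(g^\pm(\lambda)+G_0(x,y)\bigr)
\]
and making essential use of the cancellation $\lambda\bigl(g^\pm(\lambda)+G_0(x,y)\bigr)=-\frac{\lambda}{2\pi}\log\bigl(\tfrac{\lambda|x-y|}{2}\bigr)-\frac{\lambda\gamma}{2\pi}\pm\frac{i\lambda}{4}$, which converts the separate $\log\lambda$ and $\log|x-y|$ factors into a single joint logarithm $\log(\lambda|x-y|)$. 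In the low regime I would apply $(-i\alpha\cdot\nabla+\lambda)$ to the first expansion in \eqref{eq:freschroresolv} to obtain $E_0^\pm=(-i\alpha\cdot\nabla+\lambda)\widetilde O_2(\lambda^2|x-y|^2\log(\lambda|x-y|))$, yielding $|E_0^\pm|\les\lambda^2|x-y|(1+|\log(\lambda|x-y|)|)+\lambda^3|x-y|^2|\log(\lambda|x-y|)|$. Setting $u=\lambda|x-y|\in(0,c]$ and treating $|x-y|\leq 1$ and $|x-y|>1$ separately, the target reduces to the elementary estimate $u^{1-k}(1+|\log u|)\les 1$ on $(0,c]$, valid for any $k\in(0,1)$.

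In the high regime I use $|\mR_0^\pm|\les\lambda^{1/2}|x-y|^{-1/2}$ from \eqref{R0large}, together with $|\mG_{0,0}|\les|x-y|^{-1}\les\lambda$ and $|\lambda(g^\pm(\lambda)+G_0)|\les\lambda|\log(\lambda|x-y|)|\les\lambda(\lambda|x-y|)^k$; each piece is dominated by $\lambda^{1+k}|x-y|^k\les\lambda^{1+k}\la x-y\ra^k$. For the Lipschitz bound I would first derive the companion estimate $|\partial_\lambda E_0^\pm|\les\lambda^k\la x-y\ra^k$ for $k\in[\tfrac12,1)$ by the same two-regime analysis, using the identity $\partial_\lambda E_0^\pm=\partial_\lambda\mR_0^\pm+\tfrac{1}{2\pi}\log(\lambda|x-y|/2)+C^\pm$ (which follows from $\partial_\lambda[\lambda g^\pm(\lambda)]=g^\pm(\lambda)-\tfrac{1}{2\pi}$) together with \eqref{R01}. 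Writing $\Delta:=|\lambda_1-\lambda_2|$, when $\Delta\gtrsim|\lambda_2|$ the size bound plus the triangle inequality already gives $\les|\lambda_2|^{1+k}\la x-y\ra^k$, while when $\Delta\ll|\lambda_2|$ the mean value theorem with the derivative bound gives $\les\Delta|\lambda_2|^k\la x-y\ra^k$. The elementary interpolation $\min(|\lambda_2|^{1+k},\Delta|\lambda_2|^k)\leq\Delta^{1/2+\gamma}|\lambda_2|^{1/2-\gamma+k}$ for $\gamma\in[0,\tfrac12]$ concludes the proof.

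The main obstacle is maintaining the cancellation between $\log\lambda$ and $\log|x-y|$ throughout. Bounding the individual terms in \eqref{eq:R0low} naively, for instance $|g_1^\pm(\lambda)\mG_{2,1}|\les\lambda^2|\log\lambda||x-y|$, produces a stray $|\log\lambda|$ that the factor $(\lambda|x-y|)^{1-k}$ cannot absorb when $\lambda|x-y|$ is held fixed near the boundary of the low regime and $\lambda\to 0$. The remedy is either to use the first form of \eqref{eq:freschroresolv}, whose remainder already carries the joint logarithm $\log(\lambda|x-y|)$, or equivalently to observe the algebraic identity $g_1^\pm(\lambda)\mG_{2,1}+\lambda^2\mG_{2,0}=\frac{i\lambda^2\alpha\cdot(x-y)}{2}\bigl(-\frac{1}{2\pi}\log(\lambda|x-y|/2)+C^\pm\bigr)$, which collects the separate logarithms into the joint one; the same mechanism ensures that the cancellation survives $\lambda$-differentiation and thereby produces the required derivative bound.
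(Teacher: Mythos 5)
Your proof is correct and follows essentially the same route as the paper's: the same split into the regimes $|\lambda|\,|x-y|\ll 1$ and $|\lambda|\,|x-y|\gtrsim 1$, the same pointwise and $\partial_\lambda$ bounds on $E_0^\pm$, and the same interpolation of the trivial (size) bound with the mean value theorem bound to get the Lipschitz estimate. The one point you develop more fully than the paper --- recombining the separate $\log\lambda$ and $\log|x-y|$ factors into the joint logarithm $\log(\lambda|x-y|)$ in the low regime, via the identity for $g_1^\pm(\lambda)\mG_{2,1}+\lambda^2\mG_{2,0}$ or by applying $(-i\alpha\cdot\nabla+\lambda)$ to the first line of \eqref{eq:freschroresolv} --- is exactly the implicit content of the paper's terse appeal to \eqref{eq:R0low}, and you are right that a naive term-by-term bound would leave an unabsorbable stray $|\log\lambda|$.
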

\begin{proof}
The first bound for the error term follows from \eqref{eq:R0low} when $|\lambda||x-y|\ll 1$.
When $|\lambda||x-y|\gtrsim 1$, it follows by writing
$$
|E_0^\pm(\lambda)(x,y)| \les  \frac{|\lambda|}{(1+|\lambda||x-y|)^\frac12}+|x-y|^{-1}+|\lambda| |\log(|\lambda||x-y|)\les |\lambda|  (|\lambda|\la x-y\ra)^{k}, 
$$
provided that $k>0$. 
Similarly,  note that when $|\lambda||x-y|\ll 1$ we have
$$
|\partial_{\lambda}E_0^\pm(\lambda)(x,y)|\les (|\lambda||x-y|)^k, \,\,\,0<k<1,
$$
and for $|\lambda||x-y|\gtrsim 1$ we have
$$
|\partial_{\lambda}E_0^\pm(\lambda)(x,y)| \les (|\lambda||x-y|)^\frac12 +  |\log(|\lambda||x-y|)\les  (|\lambda|\la x-y\ra)^{k}, \,\,\,k\geq \tfrac12.
$$

Using these bounds with $\tfrac12\leq k<1$, we obtain the Lipschitz bound by interpolating the trivial bound,
$$|E_0^\pm(\lambda_1)-E_0^\pm(\lambda_2)|\les |\lambda_2|  (|\lambda_2|\la x-y\ra)^{k},$$
with the bound we obtain using the mean value theorem:
$$|E_0^\pm(\lambda_1)-E_0^\pm(\lambda_2)|\les |\lambda_1-\lambda_2|  (|\lambda_2|\la x-y\ra)^{k}.
$$
\end{proof}

\section{Small energy dispersive estimates when zero is regular  }\label{sec:zeroregular}

As usual, see for example \cite{Sc2,eg2,egd,EGT,EGT2d}, we use the symmetric resolvent identity to understand the low energy evolution.  In the Dirac context the potentials are matrix-valued, and we have the assumption that  the matrix 
$V:\mathbb R^2 \to \mathbb C^{2\times 2}$ is self-adjoint.  Hence, we may use the spectral theorem to write
$$
V=B^*\left(\begin{array}{cc}
\zeta_1 & 0 \\ 0 &\zeta_2
\end{array}\right)B
$$
with $\zeta_j \in \mathbb R$.  To employ the symmetric identity, with $\eta_j =|\zeta_j|^{\f12}$, we write
\begin{align*}
V=B^*\left(\begin{array}{cc}
\eta_1 & 0 \\ 0 & \eta_2
\end{array}\right) U \left(\begin{array}{cc}
\eta_1 & 0 \\ 0 & \eta_2
\end{array}\right)B = v^*Uv,
\end{align*}	
where
\begin{align}\label{eq:vabcd} 
U=\left(\begin{array}{cc}
\textrm{sign}(\zeta_1) & 0 \\ 0 & \textrm{sign}(\zeta_2)
\end{array}
\right),\,\,\,\text{ and }\,\,
v=\left(\begin{array}{cc}
a & b\\ c &d
\end{array}\right):=\left(\begin{array}{cc}
\eta_1 & 0 \\ 0 & \eta_2
\end{array}\right)  B.
\end{align}
Note that the entries of $v$ are $\les \la x\ra^{-\beta/2}$, provided that the entries of $V$ are $\les \la x\ra^{-\beta}$. 
 
Define the operators
\be\label{eqn:M def}
	M^\pm (\lambda)=U+v\mR_0^\pm (\lambda)v^*,
\ee
and let
\be \label{eqn:T def}
	T:=U+v\mG_{0,0}v^*=M^\pm(0).
\ee

\begin{defin}\label{def:resonances}
	
	We make the following definitions that characterize zero energy obstructions.
	\begin{enumerate}[i)]
	\item We define zero energy to be regular if $T=M^\pm (0)$ is invertible on $L^2(\R^2)$.  
	
	\item 	We say there is a  resonance of the first kind at zero if $T$ is not invertible on $L^2$, but $S_1v\mG_{1,1}v^*S_1$ is invertible on $S_1L^2$ where $S_1$ is the Riesz projection onto the kernel of $T$.
	
	\item We say there is a resonance of the second kind at zero if $S_1v\mG_{1,1}v^*S_1$ is not invertible.
	
	\item Let $S_2$ be the Riesz projection onto the kernel of $S_1v\mG_{1,1}v^*S_1$, then $S_1-S_2$ has rank at most two and $S_1-S_2\neq 0$ corresponds to the existence of `p-wave' resonances at zero.  $S_2\neq 0$ corresponds to the existence of an eigenvalue at zero.  In contrast to the massive case, see \cite{egd}, there are no `s-wave' resonances in the massless case.  See Section~\ref{sec:TC} below for a complete characterization.
	
	\item Noting that $v\mG_{0,0}v^*$ is compact and self-adjoint,   $T=U+v\mG_{0,0}v^*$ is  a compact perturbation of $U$.  Since the spectrum of $U$ is in $\{\pm 1\}$, zero is an isolated point of the spectrum of $T$ and the kernel is finite dimensional.  It then follows that $S_1$ is a finite rank projection, and since $S_2\leq S_1$, so is $S_2$.
	
	\end{enumerate}
	
\end{defin}

We employ the following terminology from \cite{Sc2,eg2,eg3}:

\begin{defin}
	We say an operator $T:L^2(\R^2) \to   L^2(\R^2)$ with kernel
	$T(\cdot,\cdot)$ is absolutely bounded if the operator with kernel
	$|T(\cdot,\cdot)|$ is bounded from $  L^2(\R^2)$ to $ L^2(\R^2)$. 	
\end{defin}

We note that Hilbert-Schmidt and finite-rank operators are absolutely bounded operators.  Recall that the Hilbert-Schmidt norm of an integral operator $T$ with integral kernel $T(x,y)$ is defined by
$$
	\|T\|_{HS}^2=\int_{\mathbb R^4} |T(x,y)|^2\, dx\, dy.
$$

We now concentrate on the case when zero is regular. The following expansions for $M^\pm(\lambda)$ around zero energy suffice in this case.
\begin{lemma}\label{lem:Mexp1}
	Assume  that $|V(x)|\les \la x\ra^{-\beta}$.
	\begin{enumerate}[i)]
		\item 	If $\beta>2$, then
	\be\label{eq:Mpntwise}
		M^\pm(\lambda)=T+ O(\lambda^{1-}). 
	\ee
	\item If $\beta>2+2\gamma$ for some $0\leq \gamma<\frac12$, then for $0<|\lambda_1|\leq |\lambda_2|\les 1$, we have 
	\be\label{eq:Mlip12alpha}
	  M^\pm(\lambda_1)-  M^\pm(\lambda_2)= O\big( |\lambda_1-\lambda_2|^{\frac12+\gamma} |\lambda_2|^{\frac12-\gamma-}\big).
	\ee 
	\item  If $\beta>3$, then
	\be\label{eq:Mprime}
		\partial_\lambda M^\pm(\lambda)=O(\lambda^{0-}).
	\ee
	\item If $\beta>3+2\gamma$ for some $0\leq \gamma\leq 1$, then for $0<|\lambda_1|\leq |\lambda_2|\les 1$ , we have 
	\be\label{eq:Mlip1alpha}
	  \partial_\lambda M^\pm(\lambda_1)-  \partial_\lambda M^\pm(\lambda_2)= O\big( |\lambda_1-\lambda_2|^{ \gamma} |\lambda_1|^{ -\gamma- } \big).
	\ee 
\end{enumerate}
		In all statements above the error terms are understood in the   Hilbert-Schmidt norm.
\end{lemma}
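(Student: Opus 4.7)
The idea throughout is to write each expression of interest as $v[\,\cdot\,]v^*$, where the bracket is one of $\mR_0^\pm(\lambda)-\mG_{0,0}$, $\mR_0^\pm(\lambda_1)-\mR_0^\pm(\lambda_2)$, $\partial_\lambda \mR_0^\pm(\lambda)$, or $\partial_\lambda \mR_0^\pm(\lambda_1)-\partial_\lambda \mR_0^\pm(\lambda_2)$, and then control its Hilbert--Schmidt norm directly from the pointwise kernel bounds supplied by Lemma~\ref{lem:R0simpleee} and Corollary~\ref{cor:lipbounds}. Since $v$ is a matrix multiplication operator whose entries satisfy $|v_{ij}(x)|\les \la x\ra^{-\beta/2}$, the integral kernel of $vAv^*$ is $v(x)A(x,y)v^*(y)$ and, passing to Frobenius norms,
$$
\|vAv^*\|_{HS}^2 \les \int_{\R^4}\la x\ra^{-\beta}\la y\ra^{-\beta}|A(x,y)|^2\,dx\,dy,
$$
so everything reduces to a handful of weighted integrals against powers of $|x-y|$.

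For part~(i), inserting the bound $|\mR_0^\pm(\lambda)-\mG_{0,0}| \les |\lambda|(1+(|\lambda||x-y|)^{0-})$ from \eqref{eq:R0simple} and expanding the square gives, for an arbitrarily small $\epsilon>0$,
$$
\|M^\pm(\lambda)-T\|_{HS}^2 \les |\lambda|^{2-2\epsilon}\!\int_{\R^4}\!\la x\ra^{-\beta}\la y\ra^{-\beta}|x-y|^{-2\epsilon}\,dx\,dy + |\lambda|^2\!\int_{\R^4}\!\la x\ra^{-\beta}\la y\ra^{-\beta}\,dx\,dy.
$$
Both integrals converge provided $\beta>2$: the singular factor $|x-y|^{-2\epsilon}$ is locally $L^1(\R^4)$ since $2\epsilon<2$, while the polynomial weights handle the region $|x-y|\to\infty$. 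This yields $\|M^\pm(\lambda)-T\|_{HS}\les |\lambda|^{1-}$ as claimed.

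Parts~(ii)--(iv) follow the identical template using \eqref{R0lip12alpha}, \eqref{R01}, and \eqref{R0lip1alpha}, respectively. The only change is the power of $|x-y|$ produced after squaring the kernel bound: part~(ii) contributes a term with $|x-y|^{2\gamma}$ (forcing $\beta>2+2\gamma$), part~(iii) contributes $|x-y|^{1}$ (forcing $\beta>3$), and part~(iv) contributes $|x-y|^{1+2\gamma}$ (forcing $\beta>3+2\gamma$), matching the stated hypotheses. In each case the spectral prefactors, such as $|\lambda_1-\lambda_2|^{1/2+\gamma}|\lambda_2|^{1/2-\gamma-}$ in (ii) or $|\lambda_1-\lambda_2|^{\gamma}|\lambda_1|^{-\gamma-}$ in (iv), pull outside the integral unchanged, and the milder $|x-y|^{0-}$ pieces are absorbed by local integrability under just $\beta>2$.

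The entire argument thus reduces to the standard integral estimate
$$
\int_{\R^4}\la x\ra^{-\beta}\la y\ra^{-\beta}|x-y|^{2s}\,dx\,dy < \infty \quad\text{iff}\quad \beta>2+2s,\qquad s\geq 0,
$$
together with its negative-exponent analogue for a $|x-y|^{-2\epsilon}$ factor; both are dispatched by splitting $|x-y|\les \la x\ra+\la y\ra$ at infinity and using the local $L^1$ bound for mildly singular powers, and will presumably be recorded in Section~\ref{sec:int ests}. Beyond this bookkeeping there is no real obstacle; the only mildly delicate point is simply tracking the $2\times 2$ matrix structure through $\|v(x)A(x,y)v^*(y)\|_F \les \la x\ra^{-\beta/2}\la y\ra^{-\beta/2}|A(x,y)|$, after which the proof is a direct squaring and integration.
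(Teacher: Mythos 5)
Your proposal is correct and follows essentially the same route as the paper: both write $M^\pm(\lambda)-T=v(\mR_0^\pm(\lambda)-\mG_{0,0})v^*$ (and the analogous identities for the difference and derivative), invoke the pointwise kernel bounds \eqref{eq:R0simple}, \eqref{R0lip12alpha}, \eqref{R01}, \eqref{R0lip1alpha}, and conclude by absorbing the $|x-y|^{s}$ growth into the decay of $v$ to get a finite Hilbert--Schmidt norm. You merely make explicit the squaring-and-integration bookkeeping that the paper leaves implicit.
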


We note that the uniform $L^1\to L^\infty$ bound requires only the bounds \eqref{eq:Mpntwise} and \eqref{eq:Mlip12alpha} with $\gamma=0$, hence only requiring that the entries of $V$ satisfy $|V(x)|\les \la x\ra^{-2-}$.

\begin{proof}
Using \eqref{eqn:M def}, \eqref{eqn:T def}, and  \eqref{eq:R0simple},   we have
$$
\big[M^\pm (\lambda)-T\big](x,y)= v(x)\big(\mR_0^\pm(\lambda)-\mG_{0,0} \big)(x,y)v^*(y) 
= O\Big(|\lambda|^{1-}  \,\frac{ 1+|x-y|^{0-}}{ \la x\ra^{1+} \la y\ra^{1+}}\Big).
$$
This yields \eqref{eq:Mpntwise}. To obtain \eqref{eq:Mlip12alpha}, we use \eqref{R0lip12alpha}: 
\begin{multline*}
\big[M^\pm(\lambda_1)-  M^\pm(\lambda_2)\big](x,y)=v(x)\big(\mR_0^\pm(\lambda_1)-\mR_0^\pm(\lambda_2) \big)(x,y)v^*(y)  \\
= O\Big(|\lambda_1-\lambda_2|^{\frac12+\gamma} |\lambda_2|^{\frac12-\gamma-} \frac{|x-y|^{\gamma}+|x-y|^{0-}}{\la x\ra^{1+\gamma+}  \la y\ra^{1+\gamma+}}
\Big)\\=O\Big(|\lambda_1-\lambda_2|^{\frac12+\gamma} |\lambda_2|^{\frac12-\gamma-} \frac{1+|x-y|^{0-}}{\la x\ra^{1 +}  \la y\ra^{1 +}}
\Big).
\end{multline*}
This yields \eqref{eq:Mlip12alpha}. Similarly, writing 
$$
\partial_\lambda M^\pm(\lambda)(x,y)= v(x) \partial_\lambda \mR_0^\pm(\lambda)(x,y)v^*(y),  
$$
we note that \eqref{eq:Mprime} follows from \eqref{R01}, and 
\eqref{eq:Mlip1alpha} from \eqref{R0lip1alpha}. 
\end{proof}

The following lemma establishes analogous bounds for $(M^\pm(\lambda))^{-1}$ when zero is regular. 
\begin{lemma}\label{lem:Minverse_reg}
Assume that $|V(x)|\les \la x\ra^{-\beta}$ and that zero is a regular point of the spectrum.
\\
If $\beta>2$, then  $M^\pm(\lambda)$ is invertible with a uniformly bounded inverse provided that $0<|\lambda|\ll 1$.
\begin{enumerate}[i)]
	\item If $\beta>2+2\gamma$ for some $0\leq \gamma<\frac12$, then for $0<|\lambda_1|\leq |\lambda_2|\ll 1$, we have 
	\be\label{eq:Minvlip12alpha}
	  (M^\pm(\lambda_1))^{-1}-  (M^\pm(\lambda_2))^{-1}= O\big( |\lambda_1-\lambda_2|^{\frac12+\gamma} |\lambda_2|^{\frac12-\gamma-}\big).
	\ee 
	\item  If $\beta>3$, then
	\be\label{eq:Minvprime}
		\partial_\lambda (M^\pm(\lambda))^{-1}=O(\lambda^{0-}).
	\ee
	\item If $\beta>3+2\gamma$ for some $0\leq \gamma\leq 1$, then for $0<|\lambda_1|\leq |\lambda_2|\ll 1$ , we have 
	\be\label{eq:Minvlip1alpha}
	  \partial_\lambda (M^\pm(\lambda_1))^{-1}-  \partial_\lambda (M^\pm(\lambda_2))^{-1}= O\big( |\lambda_1-\lambda_2|^{ \gamma} |\lambda_1|^{ -\gamma- }  \big).
	\ee 
\end{enumerate}
	In all statements above the error terms are understood as absolutely bounded operators.  
\end{lemma}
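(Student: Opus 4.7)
The plan is to deduce all four bounds from Lemma~\ref{lem:Mexp1} via the resolvent identity for operator inverses, using the assumption that zero is regular, i.e., $T = M^\pm(0)$ is invertible on $L^2$. Since $T-U = v\mG_{0,0}v^*$ is a compact perturbation of $U$, an argument as in \cite{eg2,egd} shows that $T^{-1}$ is absolutely bounded on $L^2(\R^2)$; indeed, the identity $T^{-1}=U^{-1}-U^{-1}(T-U)T^{-1}$ expresses $T^{-1}$ as a sum of a pointwise bounded operator and a product of such an operator with the absolutely bounded kernel of $v\mG_{0,0}v^*$.

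For uniform invertibility, note that \eqref{eq:Mpntwise} gives $M^\pm(\lambda) = T + O(\lambda^{1-})$ in the Hilbert--Schmidt (hence operator) norm. Therefore, for $|\lambda|$ sufficiently small, $\|T^{-1}(M^\pm(\lambda)-T)\|\leq \tfrac12$ and the Neumann series
\[
(M^\pm(\lambda))^{-1} = \sum_{k=0}^{\infty}\bigl[-T^{-1}(M^\pm(\lambda)-T)\bigr]^{k}T^{-1}
\]
converges, yielding $(M^\pm(\lambda))^{-1}=T^{-1}+O(\lambda^{1-})$ uniformly in $\lambda$. Since each factor $M^\pm(\lambda)-T$ is absolutely bounded by Lemma~\ref{lem:Mexp1} (its kernel is $v(x)[\mR_0^\pm(\lambda)-\mG_{0,0}](x,y)v^*(y)$, which is Hilbert--Schmidt under the decay assumption), and $T^{-1}$ is absolutely bounded, the series terms are too; convergence in the absolutely bounded operator norm follows.

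For the Lipschitz bounds in (i) and (iii), apply the second resolvent identity
\[
(M^\pm(\lambda_1))^{-1} - (M^\pm(\lambda_2))^{-1} = (M^\pm(\lambda_1))^{-1}\bigl[M^\pm(\lambda_2)-M^\pm(\lambda_1)\bigr](M^\pm(\lambda_2))^{-1},
\]
and invoke \eqref{eq:Mlip12alpha}. For (ii), the derivative formula
\[
\partial_\lambda (M^\pm(\lambda))^{-1} = -(M^\pm(\lambda))^{-1}\bigl[\partial_\lambda M^\pm(\lambda)\bigr](M^\pm(\lambda))^{-1}
\]
combined with \eqref{eq:Mprime} and the uniform bound on $(M^\pm(\lambda))^{-1}$ gives the $O(\lambda^{0-})$ estimate. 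For (iv), write the difference of derivatives as a telescoping sum
\[
\begin{aligned}
\partial_\lambda (M^\pm(\lambda_1))^{-1} - \partial_\lambda (M^\pm(\lambda_2))^{-1}
&= -(M^\pm(\lambda_1))^{-1}\bigl[\partial_\lambda M^\pm(\lambda_1)-\partial_\lambda M^\pm(\lambda_2)\bigr](M^\pm(\lambda_1))^{-1}\\
&\quad +\bigl[(M^\pm(\lambda_2))^{-1}-(M^\pm(\lambda_1))^{-1}\bigr]\partial_\lambda M^\pm(\lambda_2)\,(M^\pm(\lambda_1))^{-1}\\
&\quad +(M^\pm(\lambda_2))^{-1}\partial_\lambda M^\pm(\lambda_2)\bigl[(M^\pm(\lambda_2))^{-1}-(M^\pm(\lambda_1))^{-1}\bigr],
\end{aligned}
\]
and use \eqref{eq:Mlip1alpha} on the first term together with \eqref{eq:Mprime} and the already-established \eqref{eq:Minvlip12alpha} (applied with $\gamma=0$, which requires $\beta>2$, certainly satisfied by the $\beta>3+2\gamma$ hypothesis) on the remaining two terms. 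Each contribution is $O(|\lambda_1-\lambda_2|^{\gamma}|\lambda_1|^{-\gamma-})$, matching the claimed bound.

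The main obstacle is bookkeeping the absolute boundedness through these compositions: one must verify that each of $T^{-1}$, the differences $M^\pm(\lambda_1)-M^\pm(\lambda_2)$, the derivative $\partial_\lambda M^\pm(\lambda)$, and the derivative differences are all absolutely bounded as kernels (not merely bounded on $L^2$), so that their compositions inherit the same property. This is precisely why the estimates in Lemma~\ref{lem:Mexp1} are stated in the Hilbert--Schmidt norm; combined with the Schur-test/Neumann-series treatment of $T^{-1}$ described above, all compositions appearing in the resolvent identities remain absolutely bounded, and the pointwise bounds on the kernels propagate to produce the absolutely bounded error terms claimed in \eqref{eq:Minvlip12alpha}--\eqref{eq:Minvlip1alpha}.
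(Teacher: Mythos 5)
Your overall architecture coincides with the paper's: uniform invertibility via a Neumann series around $T$, the second resolvent identity for (i), the derivative identity for (ii), and the same three-term telescoping for (iv). However, there is a genuine quantitative gap in your treatment of (iv). For the two telescoped terms containing $(M^\pm(\lambda_2))^{-1}-(M^\pm(\lambda_1))^{-1}$, you invoke \eqref{eq:Minvlip12alpha} with $\gamma=0$, which together with \eqref{eq:Mprime} yields $O\big(|\lambda_1-\lambda_2|^{\frac12}|\lambda_2|^{\frac12-}\big)$. This does \emph{not} imply the target $O\big(|\lambda_1-\lambda_2|^{\gamma}|\lambda_1|^{-\gamma-}\big)$ once $\gamma>\tfrac12$: taking $|\lambda_1|\approx|\lambda_2|\approx\lambda_0$ and $|\lambda_1-\lambda_2|=\eps\ll\lambda_0^{3}$, one has $\eps^{1/2}\lambda_0^{1/2}\gg\eps^{\gamma}\lambda_0^{-\gamma}$ for $\gamma=1$. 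Since \eqref{eq:Minvlip1alpha} is needed downstream (in Lemma~\ref{prop:gen1}) for $\gamma=\gamma_0$ up to $1$, the full range matters. The repair is the one the paper's citation of \eqref{eq:Minvprime} points to: split into the cases $|\lambda_1-\lambda_2|\gtrsim|\lambda_2|$ (where the uniform bound on the inverses and $\partial_\lambda M^\pm=O(\lambda^{0-})$ give $O(|\lambda_2|^{0-})\les |\lambda_1-\lambda_2|^\gamma|\lambda_1|^{-\gamma-}$) and $|\lambda_1-\lambda_2|\ll|\lambda_2|\approx|\lambda_1|$ (where the mean value theorem with \eqref{eq:Minvprime} gives $(M^\pm(\lambda_2))^{-1}-(M^\pm(\lambda_1))^{-1}=O(|\lambda_1-\lambda_2|\,|\lambda_1|^{0-})\les|\lambda_1-\lambda_2|^{\gamma}|\lambda_1|^{1-\gamma-}$). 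Your bound for the first telescoped term, via \eqref{eq:Mlip1alpha}, is fine.

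A secondary, more minor point: your one-step identity $T^{-1}=U^{-1}-U^{-1}(T-U)T^{-1}$ does not by itself establish absolute boundedness of $T^{-1}$, because the remainder is a composition of an absolutely bounded kernel with the merely $L^2$-bounded operator $T^{-1}$, and absolute boundedness does not pass through such a composition. One needs a second iteration, $T^{-1}=U-U(v\mG_{0,0}v^*)U+U(v\mG_{0,0}v^*)U(v\mG_{0,0}v^*)T^{-1}$, and the observation that $(v\mG_{0,0}v^*)U(v\mG_{0,0}v^*)$ is Hilbert--Schmidt (the composition of two $|x-y|^{-1}$ kernels with $\la\cdot\ra^{-1-}$ weights produces a kernel $\les\la x\ra^{-1-}(1+|x-y|^{0-})\la y\ra^{-1-}$, cf.\ Lemma~\ref{lem:spa_int1}), so that the remainder is Hilbert--Schmidt composed with bounded operators and hence absolutely bounded. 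The paper asserts absolute boundedness of $T^{-1}$ without proof, so this is not a divergence from the paper's argument, but as written your justification is circular.
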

\begin{proof}
When zero is regular, the operator $T$ is invertible with an absolutely bounded inverse. 
Therefore, by Lemma~\ref{lem:Mexp1}, $M^\pm(\lambda)$ is invertible with a uniformly bounded inverse provided that $0<|\lambda|\ll 1$ and $|V(x)|\les \la x\ra^{-2-}$.  

Using resolvent identity, the boundedness of $(M^\pm)^{-1}$ and \eqref{eq:Mlip12alpha} we obtain \eqref{eq:Minvlip12alpha}:
\begin{multline*}
(M^\pm(\lambda_1))^{-1}-  (M^\pm(\lambda_2))^{-1}= (M^\pm(\lambda_2))^{-1} \big[M^\pm(\lambda_2)-  M^\pm(\lambda_1)\big] (M^\pm(\lambda_1))^{-1}\\ =O\big( |\lambda_1-\lambda_2|^{\frac12+\gamma} |\lambda_2|^{\frac12-\gamma-}\big).
\end{multline*}
To obtain \eqref{eq:Minvprime}, we use \eqref{eq:Mprime} and the identity
$$
\partial_\lambda (M^\pm(\lambda))^{-1}= - (M^\pm(\lambda))^{-1} \big(\partial_\lambda  M^\pm(\lambda)\big) (M^\pm(\lambda))^{-1}.
$$ 
Finally, \eqref{eq:Minvlip1alpha} follows from \eqref{eq:Minvlip12alpha}, \eqref{eq:Mprime} and \eqref{eq:Minvprime}  after writing 
\begin{multline*}
 \partial_\lambda (M^\pm(\lambda_1))^{-1}-  \partial_\lambda (M^\pm(\lambda_2))^{-1}= \big[ (M^\pm(\lambda_2))^{-1} - (M^\pm(\lambda_1))^{-1}\big] \big(\partial_\lambda  M^\pm(\lambda_2)\big) (M^\pm(\lambda_2))^{-1} \\
+  (M^\pm(\lambda_1))^{-1} \big[\partial_\lambda  \big(M^\pm(\lambda_2)\big)-\big(\partial_\lambda  M^\pm(\lambda_1)\big)\big] (M^\pm(\lambda_2))^{-1}\\
+(M^\pm(\lambda_1))^{-1} \big(\partial_\lambda  M^\pm(\lambda_1)\big) \big[(M^\pm(\lambda_2))^{-1}
- (M^\pm(\lambda_1))^{-1} \big].
\end{multline*}
\end{proof}

We are now ready to prove the small energy assertions of Theorem~\ref{thm:main} when zero is regular by studying  the small energy portion of the Stone's formula, \eqref{Stone},
$$
 \int_{-\infty}^{\infty}  e^{-it\lambda} \chi(\lambda)  [\mR_V^+ -\mR_V^-](\lambda)(x,y)\, d\lambda.
$$
In particular, we will prove the following family of bounds, which includes the uniform bound when $\gamma=0$.
\begin{prop}\label{prop:disp est reg}
	Fix $0\leq \gamma<\frac32$ and assume that $|V(x)|\les \la x\ra^{-2-2\gamma-}$.
	If zero is regular, then we have the bound
	\begin{align}
	\left| \int_{-\infty}^{\infty}  e^{-it\lambda} \chi(\lambda)  [\mR_V^+ -\mR_V^-](\lambda)(x,y)\, d\lambda  \right| 
	&\les \la x\ra^{\gamma} \la y \ra^{\gamma} \la t \ra^{-\f12-\gamma}.
	\end{align} 
\end{prop}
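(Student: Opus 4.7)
My plan is to insert the symmetric resolvent identity
\[
\mR_V^\pm(\lambda)=\mR_0^\pm(\lambda)-\mR_0^\pm(\lambda)v^*(M^\pm(\lambda))^{-1}v\mR_0^\pm(\lambda)
\]
into the Stone's formula integrand, splitting the spectral density $\mR_V^+-\mR_V^-$ into the free spectral density (already controlled by the proof of Theorem~\ref{thm:free}) plus the perturbative remainder
\[
\Gamma(\lambda):=\mR_0^+v^*(M^+)^{-1}v\mR_0^+-\mR_0^-v^*(M^-)^{-1}v\mR_0^-.
\]
The task then reduces to proving $\big|\int e^{-it\lambda}\chi(\lambda)\Gamma(\lambda)(x,y)\,d\lambda\big|\les \la x\ra^\gamma\la y\ra^\gamma\la t\ra^{-\frac12-\gamma}$. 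A uniform-in-$t$ bound (covering $|t|\les 1$) follows from the pointwise estimate $|\Gamma(\lambda)(x,y)|\les 1$, obtained by combining the pointwise bound \eqref{eq:R0simple} on $\mR_0^\pm$, the absolute boundedness of $(M^\pm(\lambda))^{-1}$ from Lemma~\ref{lem:Minverse_reg}, and the decay of the entries of $v$ to make the two intermediate integrations absolutely convergent.

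For $|t|\gtrsim 1$ and $\gamma\in[0,\tfrac12)$ I would adapt the Lipschitz-$(\frac12+\gamma)$ trick used in the proof of Theorem~\ref{thm:free}. The key step is the bound
\[
|\Gamma(\lambda_1)(x,y)-\Gamma(\lambda_2)(x,y)|\les |\lambda_1-\lambda_2|^{\frac12+\gamma}|\lambda_2|^{\frac12-\gamma-}\la x\ra^\gamma\la y\ra^\gamma,\qquad 0<|\lambda_1|\le|\lambda_2|\les 1,
\]
which I would establish by a three-term telescoping, successively replacing each of the three factors ($\mR_0^\pm$ on the left, $(M^\pm)^{-1}$ in the middle, $\mR_0^\pm$ on the right) by its $\lambda_1\to\lambda_2$ difference while keeping the other two frozen. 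The outer contributions are controlled by \eqref{R0lip12alpha} (the $|x-y|^\gamma$ weight it produces is split as $\la x\ra^\gamma\la z\ra^\gamma$, with the $\la z\ra^\gamma$ absorbed by the decay of $v$, which requires $\beta>2+2\gamma$), and the middle contribution is controlled by \eqref{eq:Minvlip12alpha}. Applying the identity $\int e^{-it\lambda}f\,d\lambda=\tfrac12\int e^{-it\lambda}(f(\lambda)-f(\lambda-\tfrac{\pi}{t}))\,d\lambda$ then yields $\la t\ra^{-\frac12-\gamma}\int_{|\lambda|\les 1}|\lambda|^{\frac12-\gamma-}d\lambda\les \la t\ra^{-\frac12-\gamma}$, the $\lambda$-integral being convergent since $\gamma<\tfrac12$.

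For $\gamma\in[\tfrac12,\tfrac32)$ a single integration by parts in $\lambda$ is needed first, producing a factor $\tfrac1t$ together with a new integrand $\partial_\lambda[\chi(\lambda)\Gamma(\lambda)]$. To the latter I would apply the Lipschitz-$(\gamma-\tfrac12)$ trick, for which the required estimate
\[
|\partial_\lambda\Gamma(\lambda_1)-\partial_\lambda\Gamma(\lambda_2)|(x,y)\les |\lambda_1-\lambda_2|^{\gamma-\frac12}|\lambda_1|^{\frac12-\gamma-}\la x\ra^\gamma\la y\ra^\gamma
\]
is again obtained by telescoping the product rule expansion of $\partial_\lambda\Gamma$, using \eqref{R01}, \eqref{R0lip1alpha}, \eqref{eq:Minvprime}, and \eqref{eq:Minvlip1alpha}. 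The upshot is $|t|^{-1-(\gamma-\frac12)}=|t|^{-\frac12-\gamma}$, with the remaining integral $\int|\lambda|^{\frac12-\gamma-}d\lambda$ converging because $\gamma<\tfrac32$. Interpolation between the endpoint estimates $\gamma=0$ and $\gamma=\tfrac32$ (together with the uniform bound) covers any intermediate weight.

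The principal obstacle is the bookkeeping inside the three-term telescopings, specifically the placement of the spatial weight $\la x\ra^\gamma\la y\ra^\gamma$ on the correct factors. Each Lipschitz bound on an outer free resolvent produces a weight of the form $|x-z|^\gamma$ or $|w-y|^\gamma$, which must be split as a product of $\la x\ra^\gamma$ (or $\la y\ra^\gamma$) with an inner weight that is then absorbed by the decay of $v$; this is exactly what fixes the decay threshold $\beta>2+2\gamma$ in the low-$\gamma$ regime and $\beta>3+2\gamma$ after the IBP. The middle-factor Lipschitz bounds \eqref{eq:Minvlip12alpha} and \eqref{eq:Minvlip1alpha} are spatially weight-free and slot in cleanly, while the remaining integrals $\int \la z\ra^{-1-}|\mR_0^\pm(\lambda)(x,z)|\,dz$ are uniformly bounded in $x$ and $\lambda\in\text{supp}\,\chi$, so pointwise kernel estimates transfer cleanly to bounds on the triple composition.
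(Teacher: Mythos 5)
There is a genuine gap at the very first step: the decomposition. You use the \emph{non-iterated} symmetric resolvent identity, so that the tail term is $\mR_0^\pm v^*(M^\pm)^{-1}v\mR_0^\pm$, and you propose to close the estimate by pairing the kernels $v\,\mR_0^\pm(\lambda)(\cdot,y)$ in $L^2$ against the absolutely bounded operator $(M^\pm)^{-1}$. This fails in two dimensions: by \eqref{G00def} and \eqref{eq:R0simple} the free resolvent kernel has the local singularity $|x-y|^{-1}$, which is locally $L^1$ but \emph{not} locally $L^2$ in $\R^2$. Hence $\|v(\cdot)\mR_0^\pm(\lambda)(\cdot,y)\|_{L^2}=\infty$, and the absolute $L^2\to L^2$ boundedness of $(M^\pm)^{-1}$ (which is all that Lemma~\ref{lem:Minverse_reg} provides -- there is no pointwise or $L^1\to L^\infty$ kernel bound for it) cannot be applied. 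Your remark that $\int\la z\ra^{-1-}|\mR_0^\pm(\lambda)(x,z)|\,dz$ is uniformly bounded is an $L^1$ statement about the kernel and does not substitute for the $L^2$ pairing you actually need. The same obstruction defeats both the uniform-in-$t$ bound and every term of your telescoping in which $(M^\pm)^{-1}$ or its Lipschitz difference sits between two bare resolvents.

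This is precisely why the paper iterates the resolvent identity twice, as in \eqref{Rexpnew}, so that the factors adjacent to $M^{-1}$ are $v\mR_0^\pm V\mR_0^\pm$ rather than $v\mR_0^\pm$: one convolution with $V$ smooths the singularity from $|x-y|^{-1}$ to $|x-y|^{0-}$ (Lemma~\ref{lem:EGcor}), giving the kernel bound \eqref{tildeR}, whose $L^2$ norm in the inner variable \emph{is} uniformly bounded; the extra Born terms $\mR_0 V\mR_0$ and $\mR_0 V\mR_0 V\mR_0$ are then estimated separately by pointwise kernel bounds and the spatial integral lemmas (Lemma~\ref{lem:bornlow2}). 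Once that correction is made, the remainder of your argument -- the Lipschitz-$(\tfrac12+\gamma)$ trick for $\gamma<\tfrac12$, and one integration by parts followed by a Lipschitz-$(\gamma-\tfrac12)$ bound on the derivative for $\tfrac12\le\gamma<\tfrac32$ -- is essentially the paper's Lemmas~\ref{prop:gen} and \ref{prop:gen1}, and your weight bookkeeping ($|x-z|^\gamma\le\la x\ra^\gamma\la z\ra^\gamma$ absorbed by the decay of $v$) is the correct mechanism. One further small point: the final ``interpolation between $\gamma=0$ and $\gamma=\tfrac32$'' is neither available (the endpoint $\gamma=\tfrac32$ is excluded) nor needed, since the Lipschitz argument proves each $\gamma$ directly.
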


In \cite{egd}, the authors studied the solution operator as an operator $\mathcal H^1\to BMO$ because the operator  $\mG_{0,0}$ is not bounded from $L^1 \rightarrow L^2$ or from $L^2 \rightarrow L^{\infty}$. Simple use of iterated resolvent identity was not enough to deal with this problem in the massive case since one relies on the orthogonality properties of the most singular terms in the expansion of the operator  $M^\pm(\lambda) ^{-1} =\big( U + v \mathcal{R}_0^\pm (\lambda)  v^*\big)^{-1}$ to get uniform estimates in $x,y$. 
In \cite{EGT2d}, this problem was  overcome  by selectively using the iterated resolvent identity for $M^{\pm}(\lambda) ^{-1}$ only for certain terms arising in the expansion.   

Since we don't rely on orthogonality arguments here, we need only use the iterated symmetric resolvent identity:
\be 
\label{Rexpnew}
\mR_V^\pm= \mR_0^\pm- \mR_0^\pm V \mR_0^\pm+ \mR_0^\pm V \mR_0^\pm V  \mR_0^\pm-\mR_0^\pm V \mR_0^\pm v^*M_{\pm}^{-1}  v \mR^{\pm}_0  V  \mR_0^\pm.
\ee
We consider the contribution of the first three summands in \eqref{Rexpnew} to the Stone's formula.
\begin{lemma}\label{lem:bornlow2} Let $\Gamma^{\pm}= \mR_0^\pm-\mathcal{R}_0^\pm V\mathcal{R}_0^\pm + \mathcal{R}_0^\pm V \mR_0^\pm V   \mathcal{R}^{\pm}_0$. Then provided that $|V(x)|\les \la x\ra^{-2- 2\gamma-}$ for some $0\leq \gamma<\frac32$, then we have the bound
	$$
  \Big|\int_\R e^{-it\lambda} \chi(\lambda)[\Gamma^+-\Gamma^-](\lambda)(x,y) d\lambda \Big| 
\les \la x\ra^{\gamma} \la y \ra^{\gamma} \la t \ra^{-\f12-\gamma}. $$
\end{lemma}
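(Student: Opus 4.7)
The plan is to handle the three Born-series terms separately, controlling each contribution to the Stone's formula with the same techniques used in the proof of Theorem~\ref{thm:free}. For the first term, the integral $\int_\R e^{-it\lambda}\chi(\lambda)[\mR_0^+-\mR_0^-](\lambda)(x,y)\,d\lambda$ is precisely the low-energy portion of the free evolution $e^{-itD_0}$, and the desired bound $\la x\ra^\gamma\la y\ra^\gamma\la t\ra^{-1/2-\gamma}$ for $0\leq\gamma\leq 3/2$ was already established inside the proof of Theorem~\ref{thm:free}, via the pointwise estimates \eqref{R0pm0}--\eqref{R0pm2} on $\mu_0$, the Lipschitz-$1/2$ identity \eqref{lip12trick}, the two-fold integration by parts \eqref{alpha32}, and interpolation. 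No decay hypothesis on $V$ enters for this piece.

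For the quadratic term, set $\nu_1^\pm(\lambda)(x,y):=\chi(\lambda)[\mR_0^+V\mR_0^+-\mR_0^-V\mR_0^-](\lambda)(x,y)$ and telescope
$$
\mR_0^+V\mR_0^+-\mR_0^-V\mR_0^-=(\mR_0^+-\mR_0^-)V\mR_0^+ +\mR_0^-V(\mR_0^+-\mR_0^-),
$$
so that each summand pairs one factor of the spectral-measure kernel $\mR_0^+-\mR_0^-$ with a single copy of $\mR_0^\pm$ and an intervening $V(z)$. I would then prove four bounds on $\nu_1^\pm$, each of which reduces to a spatial integral in $z$ controlled by \eqref{eq:R0simple}, Corollary~\ref{cor:lipbounds}, and the integral estimates of Section~\ref{sec:int ests}: schematically, for $|\lambda_1|\leq|\lambda_2|\les 1$,
\begin{align*}
|\nu_1^\pm(\lambda)(x,y)|&\les |\lambda|,\\
|\nu_1^\pm(\lambda_1)-\nu_1^\pm(\lambda_2)|&\les |\lambda_1-\lambda_2|^{1/2+\gamma}|\lambda_2|^{1/2-\gamma-}\la x\ra^{\gamma}\la y\ra^{\gamma},\\
|\partial_\lambda \nu_1^\pm(\lambda)|&\les |\lambda|^{0-},\\
|\partial_\lambda \nu_1^\pm(\lambda_1)-\partial_\lambda \nu_1^\pm(\lambda_2)|&\les |\lambda_1-\lambda_2|^{\gamma}|\lambda_1|^{-\gamma-}\la x\ra^{1/2+\gamma}\la y\ra^{1/2+\gamma}.
\end{align*}
The hypothesis $\beta>2+2\gamma$ is exactly what is needed to absorb the $\la z\ra^\gamma$ and $\la z\ra^{1/2+\gamma}$ factors coming from the Lipschitz bounds against $|V(z)|\les\la z\ra^{-\beta}$.

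Given these four bounds, the time integral is treated by the same trio of arguments used in the free case: a direct pointwise estimate handles $|t|\les 1$; for $\gamma=0$ and $|t|\gtrsim 1$, the Lipschitz-$1/2$ identity $\int e^{-it\lambda}\nu_1^\pm\,d\lambda=\tfrac{1}{2}\int e^{-it\lambda}[\nu_1^\pm(\lambda)-\nu_1^\pm(\lambda-\pi/t)]\,d\lambda$ produces $\la t\ra^{-1/2}$; and for $\gamma>0$ one integration by parts (boundary terms vanish on $\operatorname{supp}\chi$) followed by a H\"older-$\gamma$ trick on $\partial_\lambda\nu_1^\pm$, with interpolation against the endpoint $\gamma=3/2$ coming from two integrations by parts, covers the full range. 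The cubic term $\mR_0^\pm V\mR_0^\pm V\mR_0^\pm$ is decomposed analogously into three summands, each containing a single factor $\mR_0^+-\mR_0^-$, and the identical program applies with a double spatial integration $dz_1\,dz_2$ replacing the single one; the condition $\beta>2+2\gamma$ again delivers convergence.

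The principal obstacle is the bookkeeping of the logarithmic singularity of $\mR_0^\pm(\lambda)(x,z)$ at $x=z$ together with the $(1+|x-z|)^{1/2+\gamma}$ growth in the Lipschitz bounds of Corollary~\ref{cor:lipbounds}, and verifying that the resulting spatial integrals return only the claimed weights $\la x\ra^\gamma\la y\ra^\gamma$, with no uncontrolled logarithms, at the required rate in $|\lambda|$. This is precisely the role of the integral estimates collected in Section~\ref{sec:int ests}.
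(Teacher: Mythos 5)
Your architecture coincides with the paper's: the free term is disposed of by Theorem~\ref{thm:free}; the $+/-$ difference of the quadratic and cubic terms is telescoped (the paper's identity \eqref{alg fact}) so that each summand carries exactly one factor of $\mu_0=\chi(\lambda)(\mR_0^+-\mR_0^-)$; pointwise and H\"older-in-$\lambda$ bounds on the resulting kernels are assembled from \eqref{R0pm0}--\eqref{mu0C32}, Lemma~\ref{lem:R0simpleee}, Corollary~\ref{cor:lipbounds}, and the spatial integral estimates of Section~\ref{sec:int ests}; and the time integral is handled by the half-period shift \eqref{lip12trick}, preceded by one integration by parts when $\gamma\geq \f12$. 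This is the paper's proof in all essentials.

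The one place your exponents do not close as written is the regime $\f12\leq\gamma<\f32$. Your fourth displayed bound carries H\"older exponent $\gamma$, a singular factor $|\lambda_1|^{-\gamma-}$, and weights $\la x\ra^{1/2+\gamma}\la y\ra^{1/2+\gamma}$; fed into the shift trick after one integration by parts this produces $\int|\lambda_1|^{-\gamma-}\,d\lambda$, which diverges for $\gamma\geq1$, and in any case returns weights exceeding the claimed $\la x\ra^{\gamma}\la y\ra^{\gamma}$. The repair is to take the H\"older exponent on $\partial_\lambda$ of the composite kernel to be $\gamma_0=\gamma-\f12\in[0,1)$ rather than $\gamma$: the integration by parts already supplies $t^{-1}=t^{-\f12-\f12}$, so only an additional $t^{-\gamma_0}$ is needed, and \eqref{mu0C32} and \eqref{R0lip1alpha} with parameter $\gamma_0$ then yield precisely the spatial growth $|x-y|^{1/2+\gamma_0}=|x-y|^{\gamma}$ together with an integrable singularity $|\lambda_1|^{-\gamma_0-}$. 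With that substitution no interpolation against a two-integration-by-parts endpoint at $\gamma=\f32$ is required (nor is that endpoint immediately available for the perturbed terms, since $\partial_\lambda^2\mR_0^\pm$ behaves like $\lambda^{-1-}$ near zero and must first be paired against the extra power of $\lambda$ carried by $\mu_0$).
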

\begin{proof} The contribution of the first term is the free evolution which was dealt with above in Theorem~\ref{thm:free}. We note the following useful algebraic identity
\begin{align}\label{alg fact}
	\prod_{k=0}^MA_k^+-\prod_{k=0}^M A_k^-
	=\sum_{\ell=0}^M \bigg(\prod_{k=0}^{\ell-1}A_k^-\bigg)
	\big(A_\ell^+-A_\ell^-\big)\bigg(
	\prod_{k=\ell+1}^M A_k^+\bigg),
\end{align}
It suffices to consider the contribution of the following to the integral 
$$
	\widetilde\Gamma:=  \mu_0 V\mathcal{R}_0^++\mu_0 V\mathcal{R}_0^+V\mathcal{R}_0^++  \mathcal{R}_0^- V\mu_0V\mathcal{R}_0^+,
$$
where $\mu_0(\lambda)=\chi(\lambda) (\mathcal{R}_0^+(\lambda)- \mathcal{R}_0^-(\lambda)).$ 
The remaining terms have similar structure with differences $\mu_0$ on the right instead of the left.
 
Using the bounds \eqref{R0pm0}   and  \eqref{eq:R0simple}, and noting Lemma~\ref{lem:EGcor},
 we see that the kernel of $\widetilde\Gamma$ is bounded in $\lambda, x, y$ and it is supported in $|\lambda|\les 1$.  Therefore, we restrict our attention to the case $|t|>1$.

We start with the case $0\leq \gamma<\frac12$.  Using the Lipschitz bounds 
\eqref{mu0C12alpha}, \eqref{R0lip12alpha}, and the pointwise bounds \eqref{R0pm0}, \eqref{eq:R0simple}, Lemmas~\ref{lem:EGcor} and \ref{lem:spatial int1alpha}
we see that for $|\lambda_j|\les 1$, $j=1,2$, 
$$
|\widetilde\Gamma(\lambda_1)-\widetilde\Gamma(\lambda_2)|\les |\lambda_1-\lambda_2|^{\frac12+\gamma}\la x\ra^{\gamma}\la y\ra^{\gamma}.
$$
Therefore, as in \eqref{lip12trick}, we have 
\begin{multline*}
\int_\R e^{-it\lambda}  \widetilde\Gamma(\lambda)(x,y) d\lambda  =\frac12 
 \int_{|\lambda|\les 1} e^{-it\lambda} \big[ \widetilde\Gamma(\lambda)(x,y)-   \widetilde
 \Gamma(\lambda-\frac\pi{t})(x,y) \big] d\lambda \\
 = O(|t|^{-\frac12-\gamma})\la x\ra^{\gamma}\la y\ra^{\gamma}. 
\end{multline*}
The case $\frac12\leq \gamma <\frac32$ is similar after an integration by parts.  That is, we need to bound
$$
	\int_\R e^{-it\lambda}  \widetilde\Gamma(\lambda)(x,y) d\lambda =\frac{1}{it} \int_\R e^{-it\lambda}  \partial_\lambda \widetilde\Gamma(\lambda)(x,y) d\lambda 
$$
To do this, we need Lipschitz bounds on $\partial_\lambda \widetilde \Gamma$.  Writing
$$
	\partial_\lambda \widetilde\Gamma= \partial_\lambda \big( \mu_0 V\mathcal{R}_0^+\big) + \partial_\lambda \big(\mu_0 V\mathcal{R}_0^+V\mathcal{R}_0^+\big)
	+\partial_\lambda \big(  \mathcal{R}_0^- V\mu_0V\mathcal{R}_0^+\big):=
	\Gamma_1+\Gamma_2+\Gamma_3,
$$
we seek to bound $\Gamma_j(\lambda_1)-\Gamma_j(\lambda_2)$ for $j=1,2,3$.
We consider $\Gamma_1$, the others are similar. Note that
\begin{multline}\label{eq:Gamma1 diff}
	\Gamma_1(\lambda_1)-\Gamma_1(\lambda_2)=[\partial_\lambda\mu_0 (\lambda_1)-\partial_\lambda\mu_0(\lambda_2)]V\mathcal{R}_0^+(\lambda_1)+  \partial_\lambda\mu_0(\lambda_2)V[\mathcal{R}_0^+(\lambda_1)-\mathcal{R}_0^+(\lambda_2)]\\ +[\mu_0 (\lambda_1)- \mu_0(\lambda_2)]V\partial_\lambda\mathcal{R}_0^+(\lambda_1)+\mu_0(\lambda_2)V[\partial_\lambda\mathcal{R}_0^+(\lambda_1)-\partial_\lambda\mathcal{R}_0^+(\lambda_2)].
\end{multline}

Let $\gamma_0\in[0,1)$ be such that $\gamma=\gamma_0+\frac{1}{2}$ and using \eqref{mu0C32}  and  \eqref{R0lip1alpha}, (for consistency, we take $|\lambda_1|\leq |\lambda_2|$)
\begin{align*}
|\partial_\lambda \mu_0(\lambda_1)(x,y)-\partial_\lambda \mu_0(\lambda_2)(x,y)| \les  & |\lambda_1-\lambda_2|^{\gamma_0} |x-y|^{\gamma_0} (1+|\lambda_2||x-y|)^{\frac12} \\
&\les |\lambda_1-\lambda_2|^{\gamma_0} \la x\ra^\gamma \la y\ra^{\gamma},  \\
\big|\partial_{\lambda}\mR_0^\pm(\lambda_1)(x,y)-\partial_\lambda\mR_0^\pm(\lambda_2)(x,y)\big| \les &|\lambda_1-\lambda_2|^{\gamma_0} |\lambda_1|^{-\gamma_0-} \big(  |x-y|^{0-} +|x-y|^{\frac12+\gamma_0}\big)\\
&\les |\lambda_1-\lambda_2|^{\gamma_0} |\lambda_1|^{-\gamma_0-} \la x\ra^\gamma \la y\ra^{\gamma} (1+|x-y|^{0-}).
\end{align*}
In addition using \eqref{mu0C12alpha} with $\gamma=\frac12$ we have
$$
 |\mu_0(\lambda_1)(x,y)-\mu_0(\lambda_2)(x,y)| \les |\lambda_1-\lambda_2|   \la x-y\ra^{\frac12}\les  |\lambda_1-\lambda_2|^{\gamma_0}   \la x\ra^\frac12 \la y\ra^{\frac12}. 
$$
Where the last bound follows since $|\lambda_1-\lambda_2|<1$ and $\gamma_0<1$.
Similarly, using \eqref{R0lip12alpha} with $\gamma=\frac12-$ we obtain
$$
\big|\mR_0^\pm(\lambda_1)(x,y)-\mR_0^\pm(\lambda_2)(x,y)\big| \les  |\lambda_1-\lambda_2|^{\gamma_0}   \la x\ra^\frac12 \la y\ra^{\frac12}  (1+|x-y|^{0-}),
$$
Finally   by   \eqref{R0pm0} and  \eqref{eq:R0simple}, we have
$$
|\mu_0(\lambda)(x,y)|\les 1,\,\,\,\,|\mR_0^\pm(\lambda )(x,y)|\les (1+|x-y|^{-1}).
$$
Putting this all together and using Lemma~\ref{lem:EGcor}, we see that
\begin{multline*}
|\Gamma_1(\lambda_1)-\Gamma_1(\lambda_2)|\les |\lambda_1-\lambda_2|^{\gamma_0} |\lambda_1|^{-1+} \la x\ra^{\gamma}\la y\ra^{\gamma} \int_{\R^2}  \la y_1\ra^{-2-} (1+|y-y_1|^{-1}) dy_1 \\ \les |\lambda_1-\lambda_2|^{\gamma_0} |\lambda_1|^{-1+} \la x\ra^{\gamma}\la y\ra^{\gamma}.
\end{multline*}
Similarly, using Lemmas~\ref{lem:spatial int1alpha} and  \ref{lem:spa_int1}, we see that  $\Gamma_2$ and $\Gamma_3$ satisfy the same estimate. Thus
\begin{multline*}
	\int_\R e^{-it\lambda}  \widetilde\Gamma(\lambda)(x,y) d\lambda =\frac{1}{it} \int_\R e^{-it\lambda}  \partial_\lambda \widetilde\Gamma(\lambda)(x,y) d\lambda    
	\\=\frac{1}{2it} 
	\int_{|\lambda|\les 1} e^{-it\lambda} \big[\partial_\lambda  \widetilde\Gamma(\lambda)(x,y)-  \partial_\lambda  \widetilde
	\Gamma(\lambda-\frac\pi{t})(x,y) \big] d\lambda \\  =O(|t|^{-1-\gamma_0})\la x\ra^{\gamma}\la y\ra^{\gamma}
	= O(|t|^{-\frac12-\gamma})\la x\ra^{\gamma}\la y\ra^{\gamma}. 
\end{multline*}

\end{proof}

The lemma below takes care of the contribution of $M^{-1}$ term for $0\leq \gamma<\frac12$.   In contrast to the massive case \cite{egd,EGT2d} or Schr\"odinger \cite{eg3}, for the massless Dirac bound, the argument employed here does not require any cancellation between the `+' and `-' terms in the Stone's formula, \eqref{Stone}.
\begin{lemma}\label{prop:gen} Fix $0\leq \gamma<\frac12$. Assume that $|V(x)|\les \la x\ra^{-2-2\gamma-}$. Let $T(\lambda)$ be an absolutely bounded operator  satisfying
(for $|\lambda|,|\lambda_1|,|\lambda_2|\les 1$ with $|\lambda_1|\leq |\lambda_2|$) 
$$
\big\| |T(\lambda ) |\big\|_{L^2\to L^2} \les |\lambda|^{-1+},
$$
$$
\big\| |T(\lambda_1)-T(\lambda_2)|\big\|_{L^2\to L^2}\les |\lambda_1|^{-1+} |\lambda_1-\lambda_2|^{\frac12+\gamma}.
$$
 Then
$$
\Big|\int_\R e^{-it\lambda} \chi(\lambda)\big[\mR_0^\pm  V \mR_0^\pm   v^*T   v \mR^{\pm}_0   V  \mR_0^\pm\big](\lambda)(x,y) d\lambda \Big|\les \la t\ra^{-\frac12-\gamma} \la x\ra^{\gamma} \la y \ra^\gamma.
$$  
\end{lemma}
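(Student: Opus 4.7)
Let $\Gamma^\pm(\lambda) := \mR_0^\pm(\lambda)\, V\, \mR_0^\pm(\lambda)\, v^* T(\lambda)\, v\, \mR_0^\pm(\lambda)\, V\, \mR_0^\pm(\lambda)$, and fix one of the signs (the two cases are identical). Following the strategy of Lemma~\ref{lem:bornlow2}, I aim to establish the pointwise H\"older estimate
\[
\bigl|\Gamma^\pm(\lambda_1)(x,y) - \Gamma^\pm(\lambda_2)(x,y)\bigr| \les |\lambda_1-\lambda_2|^{\frac12+\gamma}\, |\lambda_1|^{-1+}\, \la x\ra^\gamma \la y\ra^\gamma, \qquad 0<|\lambda_1|\leq|\lambda_2|\les 1,
\]
together with the uniform bound $|\Gamma^\pm(\lambda)(x,y)|\les |\lambda|^{-1+}\la x\ra^\gamma\la y\ra^\gamma$.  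The uniform bound, being locally integrable in $\lambda$, handles $|t|\les 1$ directly.  For $|t|\gtrsim 1$, the Lipschitz shift trick \eqref{lip12trick} with increment $\pi/t$ converts the H\"older estimate into the desired $|t|^{-\frac12-\gamma}\la x\ra^\gamma\la y\ra^\gamma$ bound, since $\int_{|\lambda|\les 1}|\lambda|^{-1+}d\lambda<\infty$.

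To prove the H\"older bound, I would apply the algebraic identity \eqref{alg fact} to the five $\lambda$-dependent factors of $\Gamma^\pm$ (the four free resolvents and $T$), producing five summands, in each of which a single factor is replaced by its $\lambda_1\to\lambda_2$ increment.  For the middle summand, where $T$ is differenced, I rewrite the kernel as
\[
\bigl\langle A^x(\cdot),\, [T(\lambda_1)-T(\lambda_2)] B^y(\cdot)\bigr\rangle
\]
with $A^x(z) := [\mR_0^\pm V \mR_0^\pm](x,z)\, v^*(z)$ and $B^y(z) := v(z)\,[\mR_0^\pm V \mR_0^\pm](z,y)$. The hypothesis of absolute boundedness of $T(\lambda_1)-T(\lambda_2)$ together with Cauchy--Schwarz controls this summand by $\|A^x\|_{L^2_z}\,\|B^y\|_{L^2_z}\cdot|\lambda_1-\lambda_2|^{\frac12+\gamma}|\lambda_1|^{-1+}$; using the pointwise bound $|\mR_0^\pm|\les 1+|x-y|^{-1}$ from Lemma~\ref{lem:R0simpleee}, the decay $|V|\les\la\cdot\ra^{-2-2\gamma-}$ and $|v|\les\la\cdot\ra^{-1-\gamma-}$, and the convolution estimates of Section~\ref{sec:int ests} (Lemma~\ref{lem:EGcor} and Lemma~\ref{lem:spatial int1alpha}), both $L^2$ norms come out bounded uniformly in $x,y$.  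For the four summands in which an $\mR_0^\pm$ factor is differenced, I would substitute the Corollary~\ref{cor:lipbounds} estimate \eqref{R0lip12alpha}, which contributes $|\lambda_1-\lambda_2|^{\frac12+\gamma}|\lambda_2|^{\frac12-\gamma-}(|x_j-x_{j+1}|^\gamma+|x_j-x_{j+1}|^{0-})$ at the displaced variable; the $|x_j-x_{j+1}|^\gamma$ growth is split via $|x_j-x_{j+1}|^\gamma\les\la x_j\ra^\gamma+\la x_{j+1}\ra^\gamma$, with one piece absorbed by an adjacent $V$ or $v$ factor (permissible since $\beta>2+2\gamma$) and the other surviving as $\la x\ra^\gamma$ or $\la y\ra^\gamma$ after integration.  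The still-present $T(\lambda)$ in the middle is bounded by its $L^2\to L^2$ estimate $\les|\lambda|^{-1+}$, and the combined spectral factor $|\lambda|^{-\frac12-\gamma-}$ is dominated by $|\lambda|^{-1+}$ since $\gamma<\tfrac12$.

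The main technical obstacle I anticipate is the uniform $L^2$ control of the auxiliary kernels $A^x,B^y$, and their variants in the four resolvent-difference terms, together with careful bookkeeping of the $\la x\ra^\gamma,\la y\ra^\gamma$ weights.  The delicate point is to simultaneously absorb the diagonal singularity $|x-y|^{-1}$ of $\mR_0^\pm$ against the decay of $V,v$, and to convert the $|x-y|^\gamma$ growth produced by the resolvent Lipschitz estimates into \emph{exterior} weights in $x,y$ alone. Both tasks should come down to systematic use of the integral lemmas of Section~\ref{sec:int ests}, but arranging the factors so that each convolution falls into the scope of Lemma~\ref{lem:EGcor} or Lemma~\ref{lem:spatial int1alpha} requires care.
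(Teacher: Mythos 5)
Your proposal is correct and follows essentially the same route as the paper: the paper also groups the outer factors into $\widetilde R := v\,\mR_0\,V\,\mR_0$ (your $A^x$, $B^y$), derives uniform and H\"older $L^2$ bounds for its kernel from \eqref{eq:R0simple}, \eqref{R0lip12alpha} and the integral lemmas, combines these with the hypotheses on $T$ via \eqref{alg fact} to get $|\Gamma(\lambda_1)-\Gamma(\lambda_2)|\les \la x\ra^\gamma\la y\ra^\gamma|\lambda_1-\lambda_2|^{\frac12+\gamma}|\lambda_1|^{-1+}$, and concludes with the shift argument of \eqref{lip12trick}.
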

Note that the hypothesis  is satisfied by the mean value theorem if  $T(\lambda)=\widetilde O_1(\lambda^{-\frac12+})$ as  an absolutely bounded operator. Also note that when zero is regular $M^{-1}$ satisfies the hypothesis provided that $|V(x)|\les \la x\ra^{-2-2\gamma-}$, see Lemma~\ref{lem:Minverse_reg}. 
\begin{proof}
 Dropping $\pm$ signs, let $\widetilde R:= v \mR_0  V  \mR_0$.
Using the support of $\chi(\lambda)$ as well as the bounds \eqref{eq:R0simple} and \eqref{R0lip12alpha} for the free resolvent and the integral estimates in Lemmas~\ref{lem:spa_int1} and \ref{lem:spatial int1alpha}
we have (provided that $|V(x)|\les \la x\ra^{-2-2\gamma-}$, $0\leq \gamma<\tfrac12$) 
\be\label{tildeR}
|\widetilde R(\lambda)(y_1,y)| \les   (1+|y_1-y|^{0-})\la y_1\ra^{-1-}
\ee
\be\label{tildeRlip}
|\widetilde R(\lambda_1)(y_1,y)-\widetilde R(\lambda_2)(y_1,y)| \les   |\lambda_1-\lambda_2|^{\frac12+\gamma} |\lambda_2|^{\frac12-\gamma-} \la y\ra^{\gamma} \la y_1\ra^{-1-}.
\ee
 Note that \eqref{tildeR} and Lemma~\ref{lem:EGcor} imply that $L^2_{y_1}$ norm of $\widetilde R(\lambda)(y_1,y)$ is bounded uniformly in  $y$ and $\lambda$, while \eqref{tildeRlip} implies that the $L^2_{y_1}$ norm of $\widetilde R(\lambda_1)(y_1,y)-\widetilde R(\lambda_2)(y_1,y)$ is bounded by  $\la y\ra^{\gamma} |\lambda_1-\lambda_2|^{\frac12+\gamma} $.

Using these bounds and the hypothesis for $T$, using \eqref{alg fact} we see that (with $\Gamma:=\mR_0^\pm  V \mR_0^\pm   v^*T   v \mR^{\pm}_0   V  \mR_0^\pm$)
$$
|\Gamma(\lambda_1)-\Gamma(\lambda_2)|\les \la x\ra^{\gamma}\la y\ra^{\gamma} |\lambda_1-\lambda_2|^{\frac12+\gamma} |\lambda_1|^{-1+},\,\,|\lambda_j|\ll 1, j=1,2.
$$
We use \eqref{eq:R0simple} and \eqref{R0lip12alpha} for the free resolvent terms.  
Therefore, by applying the Lipschitz argument as in \eqref{lip12trick} and the proof of Lemma~\ref{lem:bornlow2}, we bound the integral  by
$$
 \la t\ra^{-\frac12-\gamma} \la x\ra^{\gamma}\la y\ra^{\gamma} \int_{-1}^1 \big(\min(|\lambda |,|\lambda-\tfrac\pi{t}|)\big)^{-1+} d\lambda\les \la t\ra^{-\frac12-\gamma} \la x\ra^{\gamma}\la y\ra^{\gamma}.
$$   
\end{proof}
For $\frac12\leq \gamma<\frac32$, we have the following lemma which we state only for $M^{-1}$. We dropped $\pm$ signs since we won't rely on any cancellation between $\pm$ terms. 
\begin{lemma} \label{prop:gen1} Fix $\frac12\leq \gamma<\frac32$. Assume that $|V(x)|\les \la x\ra^{-2-2\gamma -}$.  
 Then
$$
\Big|\int_\R e^{-it\lambda} \chi(\lambda)\big[\mR_0   V \mR_0   v^*M^{-1}  v \mR_0   V  \mR_0\big](\lambda)(x,y) d\lambda \Big|\les \la t\ra^{-\frac12-\gamma} \la x\ra^{\gamma} \la y \ra^\gamma.
$$  
\end{lemma}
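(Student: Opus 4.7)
The plan is to follow the template of Lemma~\ref{lem:bornlow2} for the range $\frac12\leq \gamma <\frac32$: integrate by parts once in $\lambda$ to gain a factor $|t|^{-1}$ (the boundary terms vanish by the cutoff $\chi$), and then apply the Lipschitz-difference trick of \eqref{lip12trick} to the derivative to extract an additional $|t|^{-\gamma_0}$, where $\gamma_0:=\gamma-\frac12\in[0,1)$. Since $\beta>2+2\gamma\geq 3$, all required derivative and Hölder bounds from Lemma~\ref{lem:Minverse_reg} (in particular \eqref{eq:Minvprime} and \eqref{eq:Minvlip1alpha} with parameter $\gamma_0$) are available for $M^{-1}$. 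The integral in question becomes
\begin{equation*}
\frac{1}{it}\int_\R e^{-it\lambda}\,\partial_\lambda\!\bigl[\chi(\lambda)\,\Gamma(\lambda)(x,y)\bigr]\,d\lambda,\qquad \Gamma:=\mR_0 V\mR_0\, v^*M^{-1}v\,\mR_0 V\mR_0.
\end{equation*}

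By the product rule, $\partial_\lambda\Gamma$ is a sum of terms of the form $(\partial_\lambda \widetilde R_1)\,v^*M^{-1}v\,\widetilde R_2$, $\widetilde R_1\,v^*(\partial_\lambda M^{-1})v\,\widetilde R_2$, and $\widetilde R_1\,v^*M^{-1}v\,(\partial_\lambda \widetilde R_2)$, together with their symmetric analogs where the derivative falls on an inner rather than outer resolvent in $\widetilde R_j=\mR_0 V\mR_0$. For each of these pieces I would establish the Hölder bound
\begin{equation*}
\bigl|\text{(piece)}(\lambda_1)(x,y)-\text{(piece)}(\lambda_2)(x,y)\bigr|\les |\lambda_1-\lambda_2|^{\gamma_0}\,|\lambda_1|^{-1+}\,\la x\ra^{\gamma}\la y\ra^{\gamma},\quad |\lambda_1|\leq |\lambda_2|\ll 1,
\end{equation*}
using the algebraic identity \eqref{alg fact} to isolate one difference factor at a time. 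The raw ingredients are the resolvent pointwise bound \eqref{eq:R0simple}, the derivative bounds \eqref{R01}--\eqref{R02}, the $\frac12{+}\gamma$-Hölder bound \eqref{R0lip12alpha} and the $\gamma$-Hölder bound \eqref{R0lip1alpha} (applied either with $\gamma_0$, or as in the proof of Lemma~\ref{lem:bornlow2} with $\gamma=\frac12-$ converted to a $\gamma_0$-bound by using $|\lambda_1-\lambda_2|<1$), together with the $M^{-1}$ bounds \eqref{eq:Minvlip12alpha}, \eqref{eq:Minvprime}, \eqref{eq:Minvlip1alpha}. The intermediate $y$-integrals against $V$ and $v^*v$ are controlled by the spatial integration lemmas of Section~\ref{sec:int ests} (Lemmas~\ref{lem:EGcor}, \ref{lem:spa_int1}, \ref{lem:spatial int1alpha}), and the $L^2\to L^2$ absolute boundedness of $M^{-1}$ is used via Cauchy--Schwarz to absorb the two interior weighted factors that flank it.

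Once the Hölder estimate is in hand, the Lipschitz trick
\begin{equation*}
\int_\R e^{-it\lambda}\,\partial_\lambda \Gamma(\lambda)\,d\lambda=\tfrac12\int_\R e^{-it\lambda}\bigl[\partial_\lambda\Gamma(\lambda)-\partial_\lambda\Gamma(\lambda-\tfrac\pi t)\bigr]\,d\lambda
\end{equation*}
(together with the analogous contribution from $\chi'(\lambda)\Gamma(\lambda)$, which is estimated more easily since no singularity in $\lambda$ is present there) gives the upper bound
\begin{equation*}
|t|^{-\gamma_0}\,\la x\ra^{\gamma}\la y\ra^{\gamma}\int_{-1}^{1} \min\bigl(|\lambda|,|\lambda-\tfrac\pi t|\bigr)^{-1+}\,d\lambda\les |t|^{-\gamma_0}\,\la x\ra^{\gamma}\la y\ra^{\gamma}.
\end{equation*}
Combined with the prefactor $|t|^{-1}$ from the integration by parts, this yields the claimed $\la t\ra^{-\frac12-\gamma}\la x\ra^\gamma \la y\ra^\gamma$ decay. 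For $|t|\les 1$ the integrand is uniformly bounded in $(\lambda,x,y)$ on the compact $\lambda$-support of $\chi$, giving the trivial $\la t\ra^0$ contribution.

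The main obstacle is the bookkeeping of spatial weights: the Hölder bound \eqref{R0lip1alpha} for $\partial_\lambda\mR_0$ contains the factor $|x-y|^{\frac12+\gamma_0}$, which must be distributed as $\la x\ra^{\gamma}\la y\ra^{\gamma}$ and absorbed through the $\la\cdot\ra^{-2-2\gamma-}$ decay of $v$ and $V$ in the interior $y_1,y_2$ integrations, without creating a non-integrable $\lambda$-singularity. Verifying that this accounting closes in every term generated by the product rule and by \eqref{alg fact} (roughly a dozen cases) is the essence of the work; all other steps are parallel to Lemmas~\ref{lem:bornlow2} and \ref{prop:gen}.
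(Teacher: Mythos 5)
Your proposal is correct and follows essentially the same route as the paper: one integration by parts to gain $|t|^{-1}$, then the half-period shift trick applied to $\partial_\lambda\Gamma$ using a $\gamma_0$-H\"older bound with an integrable $\lambda$-singularity, built from \eqref{eq:R0simple}, \eqref{R01}, \eqref{R0lip12alpha}, \eqref{R0lip1alpha}, the $M^{-1}$ bounds of Lemma~\ref{lem:Minverse_reg}, and the spatial integral lemmas. The only (immaterial) differences are organizational: the paper packages the outer blocks as $\widetilde R = v\mR_0 V\mR_0$ and tracks $L^2_{y_1}$ norms rather than working termwise through \eqref{alg fact}, and it records the slightly sharper singularity $|\lambda_1|^{-\gamma_0-}$ in place of your $|\lambda_1|^{-1+}$, both of which are integrable.
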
  
\begin{proof}
We only need consider the case $|t|>1$.  
Let $\gamma_0=\gamma-\frac12$. After an integration by parts, and ignoring the case when the derivative hits the cutoff $\chi$, it suffices to prove that
$$
\Big|\int_\R e^{-it\lambda} \chi(\lambda) \partial_\lambda\big[\mR_0   V \mR_0   v^*M^{-1}   v \mR_0   V  \mR_0 \big](\lambda)(x,y) d\lambda \Big|\les |t|^{-\gamma_0} \la x\ra^{\gamma} \la y \ra^\gamma.
$$  
Let $\widetilde R:= v \mR_0  V  \mR_0$ as in the proof of Lemma~\ref{prop:gen}.
Since $|V(x)|\les \la x\ra^{-2-2\gamma-}$,   the bound \eqref{tildeR} is valid. Using \eqref{tildeRlip} with $\gamma=\gamma_0-\tfrac12$ for $\gamma_0\in (\tfrac12,1) $ and with $\gamma=0$ for $\gamma_0\in (0,\tfrac12]$, we have 
$$
|\widetilde R(\lambda_1)(y_1,y)-\widetilde R(\lambda_2)(y_1,y)| \les   |\lambda_1-\lambda_2|^{\gamma_0} \big(1+ \la y\ra^{\gamma_0-\frac12}\big) \la y_1\ra^{-1-}
$$ 
Using \eqref{eq:R0simple}, \eqref{R01}, and integral estimate Lemma~\ref{lem:spatial int1alpha} (with $\gamma=\tfrac12$), we have
\be\label{tildeRprime}
|\partial_\lambda \widetilde R(\lambda)(y_1,y)| \les  |\lambda|^{0-}  \la y\ra^{\frac12} \la y_1\ra^{-1-}.
\ee
Finally we need a Lipschitz bound for $\partial_\lambda \widetilde R$. First note that  
using \eqref{R0lip12alpha} with $\gamma=\gamma_0-\tfrac12$ for $\gamma_0\in (\tfrac12,1) $ and with $\gamma=0$ for $\gamma_0\in (0,\tfrac12]$, we have 
$$
\big|\mR_0^\pm(\lambda_1)(x,y)-\mR_0^\pm(\lambda_2)(x,y)\big|\les \\ |\lambda_1-\lambda_2|^{ \gamma_0}   (1+\la x-y\ra^{\gamma_0-\frac12}+|x-y|^{0-}).
$$ 
Moreover, recalling \eqref{R0lip1alpha}, and taking $|\lambda_1|\leq |\lambda_2|$ as usual, we have   
\begin{align}\label{R0lip1alpha new}
\big|\partial_{\lambda}\mR_0^\pm(\lambda_1)(x,y)-\partial_\lambda\mR_0^\pm(\lambda_2)(x,y)\big|  \les  |\lambda_1-\lambda_2|^{\gamma_0}  |\lambda_1|^{-\gamma_0-}\big(  |x-y| ^{0-} + |x-y|^{\gamma}\big) .
\end{align}
Using these, \eqref{eq:R0simple}, and \eqref{R01}, we obtain 
\begin{multline*}
|\partial_\lambda \widetilde R(\lambda_1)(y_1,y)-\partial_\lambda \widetilde R(\lambda_2)(y_1,y)| \les |\lambda_1-\lambda_2|^{\gamma_0} |\lambda_1|^{-\gamma_0-} \\\times \int_{\R^2} \la y_1\ra^{-1-\gamma-} \big(|y_1-y_2|^{-1}+|y_1-y_2|^\gamma\big) \la y_2\ra^{-2-2\gamma-} \big(|y_2-y |^{-1}+|y_2-y |^\gamma\big) dy_2 
\\ \les |\lambda_1-\lambda_2|^{\gamma_0} |\lambda_1|^{-\gamma_0-}  \la y\ra^{\gamma}(1+|y_1-y|^{0-})\la y_1\ra^{-1-}.
\end{multline*}
Where the spatial integral is bounded by noting that $|x-y|^\gamma \leq \la x\ra^{\gamma} \la y \ra^\gamma$ and using Lemma~\ref{lem:spatial int1alpha}.
Using these pointwise bounds we have
$$
\|\widetilde R(\lambda)(y_1,y)\|_{L^2_{y_1}}\les 1,\,\,\,\,\,\|\partial_\lambda\widetilde R(\lambda)(y_1,y)\|_{L^2_{y_1}}\les |\lambda|^{0-}\la y\ra^{\gamma},
$$
$$
\|\widetilde R(\lambda_1)(y_1,y)-\widetilde R(\lambda_2)(y_1,y)\|_{L^2_{y_1}}\les |\lambda_1-\lambda_2|^{\gamma_0} \la y\ra^{\gamma},
$$
$$
\|\partial_\lambda \widetilde R(\lambda_1)(y_1,y)-\partial_\lambda \widetilde R(\lambda_2)(y_1,y)\|_{L^2_{y_1}}\les |\lambda_1-\lambda_2|^{\gamma_0}  |\lambda_1|^{-\gamma_0-} \la y\ra^{\gamma}.
$$
Finally note that by Lemma~\ref{lem:Minverse_reg}, $M^{-1}$ satisfies similar bounds (without $x,y$ dependence) as an absolutely bounded operator.
Therefore, letting $\Gamma= \partial_\lambda \big[\mR_0   V \mR_0   v^*M^{-1}   v \mR_0   V  \mR_0 \big]$, we see that
$$
|\Gamma(\lambda_1)(x,y)-\Gamma(\lambda_2)(x,y)|\les |\lambda_1-\lambda_2|^{\gamma_0}  |\lambda_1|^{-\gamma_0-}.
$$
This finishes the proof using the Lipschitz argument as in \eqref{lip12trick} and the proof of Lemmas~\ref{lem:bornlow2} and \ref{prop:gen}.
\end{proof}
 
We now prove Proposition~\ref{prop:disp est reg}.

\begin{proof}[Proof of Proposition~\ref{prop:disp est reg}]
Using the expansion \eqref{Rexpnew}, we see that the first terms are controlled by Lemma~\ref{lem:bornlow2}.  Then it remains only to control the tail of the Born series, with the operators $M^{\pm}(\lambda)^{-1}$. 
	By the expansion for $M^{\pm}(\lambda)^{-1}$ 
	in Lemma~\ref{lem:Minverse_reg}, we see that Lemma~\ref{prop:gen} suffices to establish the desired bound for $0\leq \gamma<\tfrac12$. The case $\tfrac12\leq\gamma<\tfrac32$ is established in Lemma~\ref{prop:gen1}.
\end{proof}

\section{Small energy resolvent expansion when zero is not regular} \label{sec:resolv notfree}

We now consider the case when zero is not a regular point of the spectrum.  We first provide the necessary expansions to develop the spectral measure when there are eigenvalues and/or resonances at zero energy, then establish the dispersive estimates.
We re-emphasize here that this is the first result, to our knowledge, in which the contribution of a `p-wave' resonance is controlled in a finite-rank term.  Previous results in the Schr\"odinger (or wave equation) context, \cite{Mur,eg2,Gwave}, have not achieved this.  Even in the weighted $L^2$ setting, \cite{Mur}, any finite rank pieces had an error whose decay was only logarithmically better.  This argument can be modified to apply to the Schr\"odinger evolution as well.

With $S_1$ being the Riesz projection onto the kernel of $T$, 
define $(T+S_1)^{-1}:=T_1$.  One can see that $S_1T_1=T_1S_1=S_1$.  Then, we have the following
variations of Lemma~\ref{lem:Mexp1} and  Lemma~\ref{lem:Minverse_reg}.  

\begin{lemma}\label{lem:Mexp2}
Assume that $|V(x)|\les \la x\ra^{-\beta}$. 
 If $\beta>2+2k$ for some $0<k<1$,  then
$$
M^\pm(\lambda)=T+\lambda g^\pm(\lambda) v\mG_{1,1}v^*+\lambda v\mG_{1,0}v^*+E_1^\pm(\lambda),
$$
where 
$$\|E_1^\pm(\lambda)\|_{HS}\les |\lambda|^{1+k}.
$$
Moreover, for fixed $0\leq \gamma<\tfrac12$ and $\tfrac12\leq k<1$,  if $\beta>2+2k$, then (for $|\lambda_1|\leq|\lambda_2|\les 1$)
$$
\|E_1^\pm(\lambda_1)-E_1^\pm(\lambda_2)\|_{HS}\les |\lambda_1-\lambda_2|^{\frac12+\gamma} |\lambda_2|^{\frac12-\gamma+k}.
$$   
\end{lemma}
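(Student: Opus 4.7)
The plan is to read off the expansion directly from Lemma~\ref{lem:R0E0} and absorb everything past the explicit terms into the error. Sandwiching between $v(x)$ and $v^*(y)$ and using $M^\pm(\lambda) = U + v\mR_0^\pm(\lambda)v^*$ together with $T = U + v\mG_{0,0}v^*$, we identify
$$
E_1^\pm(\lambda)(x,y) = v(x)\, E_0^\pm(\lambda)(x,y)\, v^*(y).
$$
Thus the expansion claimed in the statement is immediate; only the quantitative estimates on $E_1^\pm$ in the Hilbert--Schmidt norm remain.

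For the first bound, I would compute
$$
\|E_1^\pm(\lambda)\|_{HS}^2 \les \int_{\R^2}\int_{\R^2} |v(x)|^2\, |E_0^\pm(\lambda)(x,y)|^2\, |v^*(y)|^2\, dx\, dy,
$$
and insert the pointwise estimate $|E_0^\pm(\lambda)(x,y)| \les |\lambda|(|\lambda|\la x-y\ra)^k$ from Lemma~\ref{lem:R0E0}. Using $\la x-y\ra^{2k} \les \la x\ra^{2k}\la y\ra^{2k}$ and $|v(x)|,|v^*(y)| \les \la\cdot\ra^{-\beta/2}$, I would bound the double integral by
$$
|\lambda|^{2+2k} \int \la x\ra^{-\beta+2k}\la y\ra^{-\beta+2k}\, dx\, dy,
$$
which is finite precisely when $\beta - 2k > 2$, i.e.\ $\beta > 2+2k$. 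Taking square roots gives $\|E_1^\pm(\lambda)\|_{HS}\les |\lambda|^{1+k}$.

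The Lipschitz bound follows by the same strategy, using instead the second estimate in Lemma~\ref{lem:R0E0},
$$
|E_0^\pm(\lambda_1)(x,y)-E_0^\pm(\lambda_2)(x,y)| \les |\lambda_1-\lambda_2|^{\frac12+\gamma}|\lambda_2|^{\frac12-\gamma+k}\la x-y\ra^{k},
$$
valid for $0\leq\gamma<\tfrac12$ and $\tfrac12\leq k<1$. Squaring, multiplying by $|v(x)|^2|v^*(y)|^2$, and applying $\la x-y\ra^{2k}\les \la x\ra^{2k}\la y\ra^{2k}$ reduces matters again to the integral $\int \la x\ra^{-\beta+2k}\la y\ra^{-\beta+2k}\,dx\,dy$, which converges under the same hypothesis $\beta>2+2k$.

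There is essentially no obstacle here: the only quantitative input is Lemma~\ref{lem:R0E0}, and the role of the weights $v$ is purely to kill the growth $\la x-y\ra^{k}$ from the error bounds in the free resolvent expansion. The mild subtlety is simply tracking that the same threshold $\beta>2+2k$ suffices for both the pointwise and the Lipschitz estimates, which is why the Lipschitz statement is restricted to $k\geq \tfrac12$ (matching the regime in which the Lipschitz bound on $E_0^\pm$ is available in Lemma~\ref{lem:R0E0}).
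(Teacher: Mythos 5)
Your proof is correct and follows the same route as the paper, which simply observes that $E_1^\pm(\lambda)=vE_0^\pm(\lambda)v^*$ and that the bounds then follow from Lemma~\ref{lem:R0E0}; you have just filled in the Hilbert--Schmidt computation (splitting $\la x-y\ra^{2k}\les\la x\ra^{2k}\la y\ra^{2k}$ and checking convergence under $\beta>2+2k$) that the paper leaves implicit. No gaps.
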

\begin{proof} The lemma immediately follows from the bounds in Lemma~\ref{lem:R0E0} noting that
   $E_1^\pm(\lambda)=vE_0^\pm(\lambda)v^*$.
\end{proof}

\begin{lemma}\label{lem:M+S1inv}
Assume that $|V(x)|\les \la x\ra^{-\beta}$ and that zero is not a regular point of the spectrum.
\begin{enumerate}[i)]

	\item If $\beta>2+2k$ for some $0<k<1$,  then  $M^\pm(\lambda)+S_1$ is invertible with a uniformly bounded inverse provided that $0<|\lambda|\ll 1$, and we have 
\be\label{eq:M+S1_E2exp}
	(M^\pm(\lambda)+S_1)^{-1}=T_1-\lambda g^\pm(\lambda) T_1v\mG_{1,1}v^*T_1-\lambda T_1v\mG_{1,0}v^*T_1+E_2^\pm(\lambda),
	\ee
	where 
	$$E_2^\pm(\lambda)=O(|\lambda|^{1+k}).$$  
	\item If $\beta>2+2\gamma$ for some $0\leq \gamma<\frac12$, then for $0<|\lambda_1|\leq |\lambda_2|\ll 1$, we have 
	\be\label{eq:M+S1invlip12alpha}
	  (M^\pm(\lambda_1)+S_1)^{-1}-  (M^\pm(\lambda_2)+S_1)^{-1}= O\big( |\lambda_1-\lambda_2|^{\frac12+\gamma} |\lambda_2|^{\frac12-\gamma-}\big).
	\ee 
Moreover, 	for fixed $0\leq \gamma<\tfrac12$ and $\tfrac12\leq k<1$,  if $\beta>2+2k$, then (for $|\lambda_1|\leq|\lambda_2|\ll 1$)  
\be\label{eq:E2lip12alpha}
	  E_2^\pm(\lambda_1)-   E_2^\pm(\lambda_2)= O\big( |\lambda_1-\lambda_2|^{\frac12+\gamma} |\lambda_2|^{\frac12-\gamma+k}\big).
	\ee 
\end{enumerate}
	All bounds above are understood in the sense of absolutely bounded operators.
\end{lemma}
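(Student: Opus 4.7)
The plan is to invert $M^\pm(\lambda)+S_1$ by a Neumann series anchored at $(T+S_1)^{-1}=T_1$, which is absolutely bounded because $T$ is a compact perturbation of the unitary $U$ and $S_1$ is a finite-rank projection (see Definition~\ref{def:resonances}). From Lemma~\ref{lem:Mexp2} I would write
$$
M^\pm(\lambda)+S_1 = (T+S_1) + A^\pm(\lambda), \qquad A^\pm(\lambda) := \lambda g^\pm(\lambda)\, v\mG_{1,1}v^* + \lambda\, v\mG_{1,0}v^* + E_1^\pm(\lambda),
$$
noting $\|A^\pm(\lambda)\|_{HS}\lesssim |\lambda|^{1-}$. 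Factoring $M^\pm(\lambda)+S_1 = (T+S_1)(I+T_1 A^\pm(\lambda))$ and Neumann-inverting the second factor for $|\lambda|\ll 1$ gives $(M^\pm(\lambda)+S_1)^{-1}=\sum_{n\geq 0}(-T_1 A^\pm(\lambda))^n T_1$, which is absolutely bounded uniformly in $\lambda$. Reading off the $n=0$ and $n=1$ contributions yields the explicit terms in~\eqref{eq:M+S1_E2exp}, with
$$
E_2^\pm(\lambda) = -T_1 E_1^\pm(\lambda) T_1 + \sum_{n\geq 2}(-T_1A^\pm(\lambda))^n T_1.
$$
The operator-norm bound $E_2^\pm(\lambda)=O(|\lambda|^{1+k})$ then follows from Lemma~\ref{lem:Mexp2} for the leading piece and from $\|A^\pm\|_{HS}^n\lesssim |\lambda|^{n-}=O(|\lambda|^{1+k})$ for $n\geq 2$ and $k<1$.

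The Lipschitz bound~\eqref{eq:M+S1invlip12alpha} follows exactly as in Lemma~\ref{lem:Minverse_reg}, from the second resolvent identity
$$
(M^\pm(\lambda_1)+S_1)^{-1} - (M^\pm(\lambda_2)+S_1)^{-1} = (M^\pm(\lambda_2)+S_1)^{-1}\bigl[M^\pm(\lambda_2)-M^\pm(\lambda_1)\bigr](M^\pm(\lambda_1)+S_1)^{-1},
$$
combined with uniform boundedness from part (i) and the Hilbert--Schmidt bound~\eqref{eq:Mlip12alpha}.

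The main obstacle is the refined bound~\eqref{eq:E2lip12alpha}, which demands the sharper prefactor $|\lambda_2|^{1/2-\gamma+k}$; the naive resolvent identity would only produce $|\lambda_2|^{1/2-\gamma-}$. To improve this I would Lipschitz-estimate each summand of the decomposition of $E_2^\pm$ separately. The principal term $-T_1 E_1^\pm(\lambda) T_1$ inherits the sharp Lipschitz bound $|\lambda_1-\lambda_2|^{1/2+\gamma}|\lambda_2|^{1/2-\gamma+k}$ directly from the second part of Lemma~\ref{lem:Mexp2}, sandwiched between absolutely bounded copies of $T_1$. For the Neumann tail, one telescopes
$$
(T_1A^\pm(\lambda_1))^n - (T_1A^\pm(\lambda_2))^n = \sum_{j=0}^{n-1}(T_1A^\pm(\lambda_1))^j T_1\bigl[A^\pm(\lambda_1)-A^\pm(\lambda_2)\bigr](T_1A^\pm(\lambda_2))^{n-1-j},
$$
and uses $\|A^\pm(\lambda)\|_{HS}\lesssim|\lambda|^{1-}$ together with $\|A^\pm(\lambda_1)-A^\pm(\lambda_2)\|_{HS}\lesssim |\lambda_1-\lambda_2|^{1/2+\gamma}|\lambda_2|^{1/2-\gamma-}$ (for the $\lambda g^\pm$ and $\lambda$ pieces by interpolating the mean value theorem with the trivial bound, and for $E_1^\pm$ directly from Lemma~\ref{lem:Mexp2}). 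Since $n\geq 2$, the remaining $A^\pm$ factors contribute an extra $|\lambda_2|^{1-}$, producing a Lipschitz bound on the tail of order $|\lambda_1-\lambda_2|^{1/2+\gamma}|\lambda_2|^{3/2-\gamma-}$. This is stronger than the required $|\lambda_2|^{1/2-\gamma+k}$ whenever $k<1$, so both contributions to $E_2^\pm$ obey~\eqref{eq:E2lip12alpha}, completing the proof.
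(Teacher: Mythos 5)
Your proposal is correct and follows essentially the same route as the paper: factor out $T+S_1$, Neumann-expand $(I+T_1A^\pm(\lambda))^{-1}T_1$, identify $E_2^\pm=-T_1E_1^\pm T_1+\sum_{n\ge2}(-T_1A^\pm)^nT_1$, get \eqref{eq:M+S1invlip12alpha} from the resolvent identity as in Lemma~\ref{lem:Minverse_reg}, and obtain \eqref{eq:E2lip12alpha} by giving the leading term the sharp Lipschitz bound from Lemma~\ref{lem:Mexp2} while the tail picks up an extra factor $|\lambda_2|^{1-}$ that beats $|\lambda_2|^{k}$ since $k<1$. No substantive differences from the paper's argument.
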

\begin{proof}
The first assertion follows from the invertibility of $T+S_1$, \eqref{eq:Mpntwise} 
and a Neumann Series computation. Recalling that $T_1=(T+S_1)^{-1}$, the expansion \eqref{eq:M+S1_E2exp}  follows from Lemma~\ref{lem:Mexp2} noting that 
\begin{multline*}
(M^\pm(\lambda)+S_1)^{-1}=\big[T+S_1+\lambda g^\pm(\lambda) v\mG_{1,1}v^*+\lambda v\mG_{1,0}v^*+E_1^\pm(\lambda)\big]^{-1}\\
=\big[I+\lambda g^\pm(\lambda) T_1v\mG_{1,1}v^*+\lambda vT_1\mG_{1,0}v^*+T_1E_1^\pm(\lambda)\big]^{-1}T_1\\
=T_1-\lambda g^\pm(\lambda) T_1v\mG_{1,1}v^*T_1-\lambda vT_1\mG_{1,0}v^*T_1-T_1E_1^\pm(\lambda) T_1+\sum_{j=2}^\infty (-1)^j \Gamma^j T_1,
\end{multline*}
where $\Gamma=\lambda g^\pm(\lambda) T_1v\mG_{1,1}v^*+\lambda vT_1\mG_{1,0}v^*+T_1E_1^\pm(\lambda) =O(|\lambda|^{1-})$.
Therefore (since $k<1$),  
$$E_2^\pm(\lambda)=-T_1E_1^\pm(\lambda) T_1+\sum_{j=2}^\infty (-1)^j \Gamma^j T_1=O(|\lambda|^{1+k}).
$$
The proof of \eqref{eq:M+S1invlip12alpha} is identical to the proof of \eqref{eq:Minvlip12alpha}. Finally \eqref{eq:E2lip12alpha} follows from 
the Lipschitz bound for $E_1^\pm$ in Lemma~\ref{lem:Mexp2}, the bound $\Gamma  =O(|\lambda|^{1-}) $,  and by noting that the first two terms in the definition of $\Gamma$ satisfies the Lipschitz bound 
$$ |\lambda_1-\lambda_2|^{\frac12+\gamma} |\lambda_2|^{\frac12-\gamma-}.$$ 
\end{proof}

To invert $M^\pm(\lambda)=U+v\mR_0^\pm(\lambda^2)v$, for small $\lambda$, we use the
following  lemma (see Lemma 2.1  in \cite{JN}) repeatedly.  

\begin{lemma}\label{JNlemma}
Let $M$ be a closed operator on a Hilbert space $\mathcal{H}$ and $S$ a projection. Suppose $M+S$ has a bounded
inverse. Then $M$ has a bounded inverse if and only if
$$
B:=S-S(M+S)^{-1}S
$$
has a bounded inverse in $S\mathcal{H}$, and in this case
$$
M^{-1}=(M+S)^{-1}+(M+S)^{-1}SB^{-1}S(M+S)^{-1}.
$$
\end{lemma}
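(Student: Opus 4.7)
\medskip

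\noindent\textbf{Proof proposal for Lemma \ref{JNlemma}.} The plan is to establish both directions by direct algebraic manipulation, treating the identity $M = (M+S) - S$ as the starting point and using the abbreviation $N := (M+S)^{-1}$ throughout.

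For the forward direction, suppose $M$ has a bounded inverse. For an arbitrary $f \in \mathcal{H}$, set $u = M^{-1}f$. Then $Mu = f$ gives $(M+S)u = f + Su$, hence $u = Nf + NSu$. Applying the projection $S$ to both sides and rearranging yields
\begin{equation*}
  (S - SNS)\,Su = SNf, \qquad \text{i.e.,} \qquad B \cdot (Su) = SNf.
\end{equation*}
Since $f$ was arbitrary and $Su$ ranges over $S\mathcal{H}$, this shows that $B$ maps $S\mathcal{H}$ onto $S\mathcal{H}$. Injectivity of $B$ on $S\mathcal{H}$ follows from the fact that if $Bx = 0$ for $x \in S\mathcal{H}$, then $x = SNSx$, so $Nx = N S x$... more directly, one can argue that $(I-NS)$ is invertible on all of $\mathcal{H}$ with inverse related to $M^{-1}(M+S)$, and read off invertibility of $B$ from this. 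Either way, $B^{-1}$ exists as a bounded operator on $S\mathcal{H}$, and substituting $Su = B^{-1}SNf$ back into $u = Nf + NSu$ gives the claimed formula $M^{-1} = N + NSB^{-1}SN$.

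For the converse, assume $B$ is boundedly invertible on $S\mathcal{H}$ and define the candidate operator $K := N + NSB^{-1}SN$. The main step is to verify $MK = I$ and $KM = I$ by direct computation, using $M = (M+S) - S$, i.e., $MN = I - SN$ and $NM = I - NS$. For the right inverse,
\begin{equation*}
  MK = (I - SN) + (I - SN) \, SB^{-1}SN
     = I - SN + (S - SNS)\,B^{-1}SN
     = I - SN + B\,B^{-1}SN = I,
\end{equation*}
where in the last equality we used that $SN$ already takes values in $S\mathcal{H}$ (being preceded by $S$), so $B B^{-1}$ acts as the identity there. The computation of $KM = I$ is completely analogous, using $NM = I - NS$ from the other side.

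The argument is essentially self-contained algebra, so the only real obstacle is bookkeeping: one must consistently remember that $B^{-1}$ is only defined on the subspace $S\mathcal{H}$, and therefore every appearance of $B^{-1}$ in the computation should be flanked by factors of $S$ to ensure the operator is being applied to something in its domain. Once this is tracked carefully, both the formula and the equivalence drop out immediately from the two displayed identities.
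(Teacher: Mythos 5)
The paper itself does not prove this lemma; it quotes it as Lemma 2.1 of Jensen--Nenciu \cite{JN}, so there is no in-paper argument to compare against. Your proposal is the standard Feshbach/Schur-complement verification, and your converse direction is complete and correct: the identities $MN=I-SN$ and $NM=I-NS$, together with $(I-SN)S=B$ and $S(I-NS)=BS$, give $MK=KM=I$ exactly as you computed (with the mild remark that $N$ maps into $D(M)$, so $MK$ makes sense, and $KM=I$ is an identity on $D(M)$).

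The forward direction, however, has two loose ends. First, surjectivity: from $B(Su)=SNf$ you conclude that $B$ is onto $S\mathcal H$ ``since $f$ was arbitrary and $Su$ ranges over $S\mathcal H$,'' but what you actually need is that the \emph{right-hand side} $SNf$ sweeps out all of $S\mathcal H$; this is true but requires the one-line observation that for $g\in S\mathcal H$ one has $SN(M+S)g=Sg=g$. Second, injectivity is never established: the line ``$x=SNSx$, so $Nx=NSx$'' is a non sequitur (from $x=SNSx$ and $Sx=x$ you get $x=SNx$, not $Nx=NSx$), and the alternative route via invertibility of $I-NS$ is only gestured at, not carried out. The clean direct argument is: if $x\in S\mathcal H$ and $Bx=0$, then $x=SNx$; set $w=Nx$, so that $(M+S)w=x=Sw$, hence $Mw=0$, hence $w=0$ by injectivity of $M$, hence $x=Sw=0$. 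With bijectivity in hand, boundedness of $B^{-1}$ on the closed subspace $S\mathcal H$ follows from the open mapping theorem (or, more simply, by observing that your converse computation already exhibits $N+NSB^{-1}SN$ as a two-sided inverse once $B^{-1}$ exists, and conversely that $SM^{-1}S$ restricted to $S\mathcal H$ inverts $B$, which gives boundedness for free). These are fixable bookkeeping gaps rather than a wrong approach, but as written the ``only if'' half of the equivalence is incomplete.
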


We   apply this lemma with $M=M^\pm(\lambda)$ and $S=S_1$. The fact that $M^{\pm}(\lambda)+S_1$
has a bounded inverse in $L^2(\mathbb R^2)$ follows from Lemma~\ref{lem:M+S1inv}. We also need to prove that   
\begin{align}\label{B defn}
  B_{\pm}=S_1-S_1(M^\pm(\lambda)+S_1)^{-1}S_1
\end{align}
has a bounded inverse in $S_1L^2(\mathbb R^2)$. We have, using \eqref{eq:M+S1_E2exp}  and the fact that $S_1T_1=S_1$,
\begin{align*}
B^\pm (\lambda)&= S_1-S_1(M^\pm (\lambda)+S_1)^{-1}S_1\\
&=S_1-S_1 \bigg[ T_1-\lambda g^\pm(\lambda) T_1v\mG_{1,1}v^*T_1-\lambda T_1v\mG_{1,0}v^*T_1+E_2^\pm(\lambda)\bigg] S_1\\
&=\lambda g^\pm (\lambda) S_1v\mG_{1,1}v^*S_1+\lambda S_1v\mG_{1,0}v^*S_1-S_1E_2^\pm(\lambda) S_1.
\end{align*}
We write:
\begin{align}\label{eqn:B defn}
	B^\pm(\lambda)&=\lambda A^\pm (\lambda)-S_1E_2^\pm(\lambda) S_1\\
	A^\pm(\lambda)&=S_1v(g^\pm(\lambda)\mG_{1,1}+\mG_{1,0}  )v^*S_1 \label{eqn:A defn}
\end{align}
The remainder of this section is devoted to inverting $A^\pm(\lambda)$ in a neighborhood of zero under different spectral assumptions.

\begin{prop}\label{lem:Ainverse}
	Assume that $|V(x)|\les \la x\ra^{-2-}$. For sufficiently small $\lambda$, the operators $A^\pm(\lambda)$ are invertible on $S_1L^2$.  Further, 
	$$
	A^\pm(\lambda)^{-1}=[S_2v\mG_{1,0}v^*S_2]^{-1}+\widetilde O_1((\log\lambda)^{-1} ),	
	$$
	as an operator on $S_1L^2$.  Morever 
	$$
	A^+(\lambda)^{-1}-A^-(\lambda)^{-1}=
	\widetilde O_1((\log\lambda)^{-2} ).
	$$
	Furthermore, if $S_1=S_2$, we have
	$$
		A^\pm(\lambda)^{-1}= [S_2v\mG_{1,0}v^*S_2]^{-1},
	$$
	which is independent of $\lambda$ and the choice of sign.
	
\end{prop}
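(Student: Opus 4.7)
My plan is to reduce the question to a finite-dimensional block-matrix inversion exploiting that $S_1 L^2$ is finite dimensional (Definition~\ref{def:resonances}(v)) and that $g^\pm(\lambda)\to\infty$ logarithmically as $\lambda\to 0$. Set $P:=S_1 v \mG_{1,1} v^* S_1 = S_1 v v^* S_1$ (using $\mG_{1,1}=1$) and $Q:=S_1 v \mG_{1,0} v^* S_1$, so that $A^\pm(\lambda) = g^\pm(\lambda) P + Q$. Because $P$ is self-adjoint with kernel exactly $S_2 L^2$ by definition of $S_2$, the orthogonal splitting $S_1 L^2 = (S_1-S_2)L^2 \oplus S_2 L^2$ puts $P$ into block-diagonal form $\mathrm{diag}(P_{11},0)$ with $P_{11}$ invertible on the at-most-two-dimensional space $(S_1-S_2)L^2$, while $Q$ splits as a $2\times 2$ block matrix whose $(2,2)$-entry is $Q_{22}=S_2 v \mG_{1,0} v^* S_2$. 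The case $S_1=S_2$ is then immediate: the top-left block is absent and $A^\pm(\lambda)=Q_{22}$ is manifestly independent of $\lambda$ and of the sign.

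For the generic case I would apply the Schur complement / block-inverse formula (equivalently Lemma~\ref{JNlemma} with $S=S_2$ on the Hilbert space $S_1 L^2$). Since $g^\pm(\lambda)=-\tfrac{1}{2\pi}\log(\lambda/2)-\tfrac{\gamma}{2\pi}\pm\tfrac{i}{4}$, the $(1,1)$-block $a^\pm(\lambda):=g^\pm(\lambda)P_{11}+Q_{11}$ is invertible for small $\lambda$ by a Neumann series in the small parameter $1/g^\pm(\lambda)$, yielding
\[
(a^\pm(\lambda))^{-1} = \frac{P_{11}^{-1}}{g^\pm(\lambda)} + \widetilde O_1\!\big((\log\lambda)^{-2}\big) = \widetilde O_1\!\big((\log\lambda)^{-1}\big).
\]
The corresponding Schur complement reads $Q_{22} - Q_{21}(a^\pm)^{-1}Q_{12} = Q_{22} + \widetilde O_1((\log\lambda)^{-1})$, which a second Neumann series inverts with leading term $Q_{22}^{-1}$. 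Reassembling the four blocks of the block-inverse formula, the $(1,1)$, $(1,2)$, and $(2,1)$ entries all carry at least one factor of $(a^\pm)^{-1}$ and are therefore $\widetilde O_1((\log\lambda)^{-1})$, leaving only $Q_{22}^{-1}$ on $S_2 L^2$ as the leading term — exactly the claimed expansion.

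For the $\pm$-difference the key observation is that $g^+(\lambda) - g^-(\lambda) = i/2$ is independent of $\lambda$, so $a^+-a^-=(i/2)P_{11}$ and the second resolvent identity gives $(a^+)^{-1}-(a^-)^{-1}=\widetilde O_1((\log\lambda)^{-2})$. Propagating this extra $(\log\lambda)^{-1}$ factor through the block-inverse formula produces the stated $\widetilde O_1((\log\lambda)^{-2})$ bound on $A^+(\lambda)^{-1}-A^-(\lambda)^{-1}$. The derivative components of the $\widetilde O_1$ assertions come essentially for free from $(g^\pm)'(\lambda)=-\tfrac{1}{2\pi\lambda}$ combined with the product rule applied to the finitely many factors above.

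The main obstacle I foresee is justifying invertibility of $Q_{22}=S_2 v \mG_{1,0} v^* S_2$ on $S_2 L^2$, since the entire Schur-complement scheme rests on it. This is exactly the analytic content of the paper's remark that the massless Dirac case admits no \emph{s-wave} resonance (cf. Definition~\ref{def:resonances}(iv)), and I would establish it through the characterization of threshold obstructions in Section~\ref{sec:TC}: elements $\phi \in S_2 L^2$ are in one-to-one correspondence with genuine $L^2(\mathbb{R}^2)$ zero-energy eigenfunctions $\psi$ of $H=D_0+V$ via $V\psi = v^* U \phi$, and the pairing $\langle Q_{22}\phi,\phi\rangle$ reduces to the quadratic form $\langle \mG_{1,0} V\psi, V\psi\rangle$, which cannot vanish on a nonzero eigenfunction because $\mG_{1,0}=(-\Delta)^{-1}$ and $V\psi=-D_0\psi$ forces a compatibility identity that rules out degeneracy.
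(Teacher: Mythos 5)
Your proposal is correct and follows essentially the same route as the paper: the orthogonal splitting $S_1L^2=(S_1-S_2)L^2\oplus S_2L^2$, a Feshbach/Schur-complement block inversion driven by the logarithmic divergence of $g^\pm(\lambda)$ together with a Neumann series, the constancy of $g^+-g^-$ for the $\pm$-difference, and the invertibility of $S_2v\mG_{1,0}v^*S_2$ (the paper's Lemma~\ref{lem:S2invert}) as the key analytic input, which you correctly single out. The only cosmetic difference is that you invert the p-wave block first and take the Schur complement on the $S_2$-block, whereas the paper inverts the $\lambda$-independent $S_2$-block first and verifies nondegeneracy of the $\mG_{1,1}$ Gram form on $(S_1-S_2)L^2$ by an explicit linear-independence argument rather than by your (equally valid) appeal to self-adjointness and positivity of $S_1v\mG_{1,1}v^*S_1$.
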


We note that these operators are finite rank on $L^2$ since $S_1L^2$ is a finite-dimensional subspace.

\begin{proof}

We begin by writing the projection $S_1=Q\oplus S_2$ where $Q$ is orthogonal to $S_2$. We note that by Lemmas~\ref{lem:S1char2} and \ref{lem:S2class}, $Q$ corresponds to a projection onto the p-wave resonance space.  By Corollary~\ref{cor:dim}, $Q$  has rank at most two.  We first note that when $Q=0$, the statement follows \eqref{eqn:A defn} and the orthogonality property that $S_2v\mG_{1,1}=0$.  
The invertibility of the resulting operator is guaranteed by Lemma~\ref{lem:S2invert}.  The following lemma implies the proposition when $S_2=0$.

\begin{lemma}\label{lem:QAQ invert}
	
	When $Q\neq 0$, the operator $QA^\pm(\lambda)Q$ is invertible for sufficiently small $\lambda$.  Further,
	$$
		(QA^\pm(\lambda)Q)^{-1}=\widetilde O_1((\log\lambda)^{-1} ),	
	$$
	as an operator on $QL^2$.  Morever 
	$$
		(QA^+(\lambda)Q)^{-1}-(QA^-(\lambda)Q)^{-1}=
		\widetilde O_1((\log\lambda)^{-2} ).
	$$
	
\end{lemma}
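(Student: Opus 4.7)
My plan is to compress everything to $QL^2$, exploit that $\mG_{1,1} = 1$ so that the $\lambda$-dependence of $A^\pm(\lambda)$ lives entirely in the scalar coefficient $g^\pm(\lambda)$, and then invert via a Neumann series after factoring out the leading logarithm. Using \eqref{eqn:A defn} and $\mG_{1,1} = 1$, I would first write
$$
QA^\pm(\lambda)Q = g^\pm(\lambda)\, D + E, \qquad D := Q v v^* Q, \qquad E := Q v \mG_{1,0} v^* Q.
$$
Since $S_2$ is by definition the Riesz projection onto $\ker(S_1 vv^* S_1)$ inside $S_1 L^2$ and $Q = S_1 - S_2$, the finite-rank operator $D$ is injective on $QL^2$, hence invertible there.

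Next I plan to factor $QA^\pm(\lambda)Q = g^\pm(\lambda)\bigl[D + g^\pm(\lambda)^{-1} E\bigr]$. Because $|g^\pm(\lambda)| \sim |\log\lambda|/(2\pi)$ as $\lambda \to 0$, the perturbation $g^\pm(\lambda)^{-1} E$ has operator norm $O((\log\lambda)^{-1})$ on $QL^2$, so for small enough $\lambda$ the bracketed operator is invertible by Neumann series, yielding
$$
(QA^\pm(\lambda)Q)^{-1} = \frac{1}{g^\pm(\lambda)} D^{-1} + \widetilde O_1((\log\lambda)^{-2}),
$$
which establishes the first claim. For the difference estimate I would use the crucial observation that $g^+(\lambda) - g^-(\lambda) = i/2$ is $\lambda$-independent. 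Plugging this into the resolvent-style identity
$$
(QA^+Q)^{-1} - (QA^-Q)^{-1} = -\tfrac{i}{2}\,(QA^-Q)^{-1}\, D\, (QA^+Q)^{-1},
$$
each factor of $(QA^\pm Q)^{-1}$ contributes $(\log\lambda)^{-1}$, producing the advertised $\widetilde O_1((\log\lambda)^{-2})$ bound.

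The main technical nuisance will be upgrading these pointwise estimates to the $\widetilde O_1$ sense, which demands derivative bounds consistent with division by $\lambda$. Since $\partial_\lambda g^\pm = -1/(2\pi\lambda)$, one differentiation converts $1/g^\pm(\lambda)$ into an $O(1/(\lambda \log^2\lambda))$ quantity; one must check that this is consistent with $\widetilde O_1((\log\lambda)^{-1})$, which holds since $(\log\lambda)^{-2}/\lambda \leq (\log\lambda)^{-1}/\lambda$ near the origin. Together with the analogous verification for the difference, this bookkeeping is essentially the only step requiring genuine care; the rest is a finite-dimensional Neumann-series computation on the space $QL^2$, which has dimension at most two by Corollary~\ref{cor:dim}.
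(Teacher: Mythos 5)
Your proposal is correct and follows the same skeleton as the paper's proof: isolate the $g^\pm(\lambda)$ coefficient, show the $\lambda$-independent leading operator is invertible on $QL^2$, and get the difference bound from the resolvent identity together with the fact that $g^+-g^-=i/2$ is constant. The one place where you genuinely diverge is the invertibility of the leading operator. The paper picks an orthonormal basis $\{\phi_1,\phi_2\}$ of $QL^2$ and proves that $\mG_{1,1}v^*\phi_1,\mG_{1,1}v^*\phi_2\in\C^2$ are linearly independent by appealing to the threshold characterization (Lemmas~\ref{lem:S1char} and \ref{lem:S1char2}), so that the resulting Gram matrix is invertible; you instead observe that $Q$ is by construction the complement of $\ker(S_1v\mG_{1,1}v^*S_1)$ inside $S_1L^2$, so the compression to $QL^2$ is automatically injective. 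Your route is shorter and avoids the resonance-function machinery, but it does use implicitly that $S_1v\mG_{1,1}v^*S_1$ is self-adjoint (indeed positive semi-definite, being $C^*C$ with $C=\mG_{1,1}v^*S_1$): for a general operator, $\ker B\cap QL^2=\{0\}$ does not make $QBQ$ injective on $QL^2$, so you should say this. Two smaller points: (i) your notation $D:=Qvv^*Q$ is misleading --- $\mG_{1,1}=1$ means the \emph{kernel} is identically $1$, so $v\mG_{1,1}v^*$ is the rank-$\le 2$ operator $f\mapsto v(x)\int v^*f$, not the multiplication operator $v(x)v^*(x)$; your argument is fine once $D$ is read as $Qv\mG_{1,1}v^*Q$, which is clearly what you intend since you tie its kernel to $S_2$. (ii) In the resolvent identity the factors should appear as $(QA^+Q)^{-1}D(QA^-Q)^{-1}$ (or note that the ordering is irrelevant for the estimate). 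The Neumann-series packaging of the inverse and the derivative bookkeeping ($\partial_\lambda g^\pm=-1/(2\pi\lambda)$ yielding $O(\lambda^{-1}(\log\lambda)^{-2})$ and $O(\lambda^{-1}(\log\lambda)^{-3})$ for the two claims) are correct and equivalent to the paper's ``rational functions in $\log\lambda$'' argument.
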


\begin{proof}
 We begin by showing that $QA^\pm(\lambda)Q$ is invertible on $QL^2$.  In the case that $Q$ has rank one,  then using \eqref{eqn:A defn} we can see that $QA^\pm(\lambda)Q$ is a scalar of the form
$$
	(c_1g^\pm(\lambda)+c_2)Q, \qquad c_1\in \mathbb R\setminus \{0\}.
$$
Which, by \eqref{g def}, suffices to show our desired results.  

We now consider the case when $Q$ has rank two.
We may select an orthonormal basis for $QL^2$, $\{\phi_1, \phi_2 \}$.  We claim that $ \mG_{1,1}v^* \phi_1$ and $ \mG_{1,1}v^* \phi_2$ are linearly independent.  Assume they aren't, and let $\psi_j=-\mG_{0,0} v^* \phi_j$, $j=1,2$.  
Then for some $c$, 
$$ \psi_1-c\psi_2=\mG_{0,0}v^*(c\phi_2-\phi_1)
=\big(\mG_{0,0}-\frac{i\alpha \cdot x}{2\pi \la x\ra^2} \mG_{1,1}\big)v^*(c\phi_2-\phi_1)\in L^2$$
by the proof of Lemma~\ref{lem:S1char}.  By Lemma~\ref{lem:S1char2}, $\psi(x)=\frac{-i\alpha \cdot x}{2\pi \la x\ra^2}\mG_{1,1} v^* \phi  +\Gamma_2$ with $\Gamma_2\in L^2$. Hence $\{\phi_1,\phi_2\}$ can only span a one-dimensional subspace of $QL^2$.  This proves our claim.

We now write with respect to the  basis $\{\phi_1, \phi_2 \}$:
$$
QA^\pm(\lambda)Q=g^\pm(\lambda) \left[\begin{array}
{ll}|\mG_{1,1}v^*\phi_1|^2 & \la \mG_{1,1}v^*\phi_1, \mG_{1,1}v^*\phi_2\ra_{\mathbb C^2} \\
\overline{\la \mG_{1,1}v^*\phi_1, \mG_{1,1}v^*\phi_2\ra_{\mathbb C^2}}
& |\mG_{1,1}v^*\phi_2|^2 \end{array}
\right] + A_1,
$$
where $A_1$ is a $2\times 2$ matrix of constants given by the contributions of $\phi_i v\mG_{1,0}v^* \phi_j$. Since $ \mG_{1,1}v^* \phi_1$ and $ \mG_{1,1}v^* \phi_2$ are linearly independent, the first matrix above is invertible, and hence, for sufficiently small $\lambda$, $QA^\pm(\lambda)Q$ is invertible. Moreover the entries of its inverse are rational functions in $\log(\lambda)$, and the degree of the denominator is at least one more than the degree of the numerator. In particular, they are of the form 
$\widetilde O_1(\frac1{\log(\lambda)})$.

The final claim follows from the resolvent identity and \eqref{g def}, since $(A^+-A^-)(\lambda)$ is independent of $\lambda$.

\end{proof}

We now consider the case when both $Q,S_2\neq 0$.  We  employ the Feshbach formula, see for example Lemma~2.3 in \cite{JN}.  If $A(\lambda)=\left[\begin{array}{ll} a_{11} & a_{12}\\ a_{21} & a_{22}\end{array} \right]$, the invertibility of $A(\lambda)$ follows if both $a_{22}$ is invertible and  $a:=(a_{11}-a_{12}a_{22}^{-1}a_{21})^{-1}$ exists.  Then, we have
\begin{align}
	A(\lambda)^{-1}=\left[
	\begin{array}{ll}
		a & -aa_{12}a_{22}^{-1}\\
		-a_{22}^{-1}a_{21}a & a_{22}^{-1}a_{21} a a_{12} a_{22}^{-1}+a_{22}^{-1}
	\end{array}
	\right].
\end{align}
In our case $a_{22}=S_2v\mG_{1,0}v^*S_2$ which is invertible by Lemma~\ref{lem:S2invert}.  Moreover, 
$$
	a=\big(  QA^\pm(\lambda) Q -Qv\mG_{1,0}v^* S_2(S_2v\mG_{1,0}v^* S_2)^{-1} S_2 v\mG_{1,0}v^* Q
	\big)^{-1}
$$
exists for sufficiently small $\lambda$ since $QA^\pm(\lambda)Q$ is invertible by Lemma~\ref{lem:QAQ invert}, while the second summand is a $\lambda$ independent $2\times2$ matrix.

\end{proof}

\begin{lemma}\label{lem:Binv}
Assume that $|V(x)|\les \la x\ra^{-\beta}$ and that zero is not a regular point of the spectrum. 
If $\beta>2+2k$ for some $0<k<1$,  then for $0<|\lambda|\ll 1$, we have
$$
B_\pm(\lambda)^{-1}= \frac1\lambda  A^\pm (\lambda)^{-1}   + E_3^\pm(\lambda), 
$$
where $E_3^\pm(\lambda)= O(|\lambda|^{-1+k})$ as an absolutely bounded operator.\\
Moreover, 	for fixed $0\leq \gamma<\tfrac12$ and $\tfrac12\leq k<1$,  if $\beta>2+2k$, then (for $|\lambda_1|\leq|\lambda_2|\ll 1$)  
\be\label{eq:E3lip12alpha}
	  E_3^\pm(\lambda_1)-   E_3^\pm(\lambda_2)= O\big( |\lambda_1-\lambda_2|^{\frac12+\gamma} |\lambda_1|^{-\frac32-\gamma+k}\big).
	\ee 
\end{lemma}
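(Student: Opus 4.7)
The natural starting point is the factorization
\[
B^{\pm}(\lambda) \;=\; \lambda A^{\pm}(\lambda)\Bigl[S_1-\tfrac{1}{\lambda}A^{\pm}(\lambda)^{-1}S_1E_2^{\pm}(\lambda)S_1\Bigr]
\]
on $S_1L^2$, which is valid for $0<|\lambda|\ll 1$ by Proposition~\ref{lem:Ainverse} (which gives invertibility of $A^\pm(\lambda)$ on $S_1L^2$) together with Lemma~\ref{lem:M+S1inv}. Since $A^{\pm}(\lambda)^{-1}=O(1)$ and $E_2^{\pm}(\lambda)=O(|\lambda|^{1+k})$, the operator $\lambda^{-1}A^{\pm}(\lambda)^{-1}S_1E_2^{\pm}(\lambda)S_1=O(|\lambda|^k)$ is small, so the bracketed operator is invertible by a Neumann series. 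Inverting gives
\[
B^{\pm}(\lambda)^{-1}=\tfrac{1}{\lambda}A^{\pm}(\lambda)^{-1}+\sum_{j\ge 1}\Bigl(\tfrac{1}{\lambda}A^{\pm}(\lambda)^{-1}S_1E_2^{\pm}(\lambda)S_1\Bigr)^{\!j}\tfrac{1}{\lambda}A^{\pm}(\lambda)^{-1},
\]
and we define $E_3^{\pm}(\lambda)$ to be the sum. The $j=1$ term is bounded by $|\lambda|^{-2}\cdot|\lambda|^{1+k}=|\lambda|^{-1+k}$ as an absolutely bounded operator (using that $A^{\pm}(\lambda)^{-1}$ is finite rank, hence absolutely bounded, and similarly for $S_1E_2^{\pm}(\lambda)S_1$ by Lemma~\ref{lem:M+S1inv}). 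The $j\ge 2$ terms contribute $O(|\lambda|^{jk-1})$, which are smaller; this proves the first assertion.

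For the Lipschitz bound, I would focus on the leading (i.e.\ $j=1$) piece
\[
F^{\pm}(\lambda)\;:=\;\tfrac{1}{\lambda^{2}}\,A^{\pm}(\lambda)^{-1}S_1E_2^{\pm}(\lambda)S_1A^{\pm}(\lambda)^{-1},
\]
and split the difference $F^{\pm}(\lambda_1)-F^{\pm}(\lambda_2)$ additively into three contributions: one where the difference falls on $E_2^{\pm}$, one where it falls on the scalar $\lambda^{-2}$, and one where it falls on an $A^{\pm}(\lambda)^{-1}$ factor. The first contribution, using \eqref{eq:E2lip12alpha} together with $A^{\pm}(\lambda)^{-1}=O(1)$ and $|\lambda|^{-2}\le|\lambda_1|^{-2}$, yields the clean bound
\[
|\lambda_1-\lambda_2|^{\frac12+\gamma}|\lambda_1|^{-2}|\lambda_2|^{\frac12-\gamma+k}\;\lesssim\;|\lambda_1-\lambda_2|^{\frac12+\gamma}|\lambda_1|^{-\frac32-\gamma+k},
\]
which is exactly what we need. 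The second contribution uses $|\lambda_1^{-2}-\lambda_2^{-2}|\lesssim|\lambda_1-\lambda_2|\,|\lambda_1|^{-3}$ together with interpolation against the trivial pointwise bound: write $|\lambda_1-\lambda_2|=|\lambda_1-\lambda_2|^{\frac12+\gamma}|\lambda_1-\lambda_2|^{\frac12-\gamma}\le|\lambda_1-\lambda_2|^{\frac12+\gamma}|\lambda_1|^{\frac12-\gamma}$, then multiply by $|\lambda_1|^{-3}$ and by $\|E_2^{\pm}\|=O(|\lambda|^{1+k})$ to obtain the same bound. The third contribution uses the smoothness of $A^{\pm}(\lambda)^{-1}$ from Proposition~\ref{lem:Ainverse} (the derivative is $\widetilde O((\log\lambda)^{-2}/\lambda)$), which gives an even smaller contribution upon the same interpolation trick.

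The tail $\sum_{j\ge 2}$ is handled identically: each term factors as a product, and differentiating one factor at a time yields a Lipschitz bound whose pointwise size is $O(|\lambda|^{jk-1})$ and whose Lipschitz size picks up an extra factor of $|\lambda_1-\lambda_2|^{\frac12+\gamma}|\lambda_1|^{-\frac12-\gamma}$, both of which improve on the $j=1$ estimate since $jk-1\ge 2k-1\ge k$ for $k\le 1$. The main obstacle, and the place where I would be most careful in the writeup, is verifying that in every one of these terms the absolute-bounded-operator norms compose correctly, and in particular that the Lipschitz bound on the scalar factor $\lambda^{-2}$ can be converted via interpolation to an estimate of the required form $|\lambda_1-\lambda_2|^{\frac12+\gamma}|\lambda_1|^{-\frac32-\gamma+k}$; once that routine bookkeeping is done, the conclusion \eqref{eq:E3lip12alpha} follows.
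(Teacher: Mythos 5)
Your argument is the paper's argument: the same factorization $B^\pm(\lambda)=\lambda A^\pm(\lambda)\bigl[I-\tfrac1\lambda A^\pm(\lambda)^{-1}S_1E_2^\pm(\lambda)S_1\bigr]$, the same Neumann series defining $E_3^\pm$, and the same three-way distribution of the difference over the factors $\lambda^{-2}$, $E_2^\pm$, and $A^\pm(\lambda)^{-1}$ using \eqref{eq:E2lip12alpha} and \eqref{eq:Ainv Lip}.

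The one point you should make explicit is the dichotomy $|\lambda_1-\lambda_2|\gtrsim|\lambda_2|$ versus $|\lambda_1-\lambda_2|\ll|\lambda_2|\approx|\lambda_1|$, which the paper invokes here (and throughout). As written, two of your inequalities fail when $|\lambda_2|\gg|\lambda_1|$: the interpolation step $|\lambda_1-\lambda_2|^{\frac12-\gamma}\le|\lambda_1|^{\frac12-\gamma}$ requires $|\lambda_1-\lambda_2|\le|\lambda_1|$, and the ``clean bound'' $|\lambda_1|^{-2}|\lambda_2|^{\frac12-\gamma+k}\lesssim|\lambda_1|^{-\frac32-\gamma+k}$ requires $|\lambda_2|\lesssim|\lambda_1|$ since the exponent $\frac12-\gamma+k$ is positive. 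These are harmless only because in the complementary regime $|\lambda_1-\lambda_2|\gtrsim|\lambda_2|$ one does not telescope at all: the pointwise bound $E_3^\pm(\lambda_j)=O(|\lambda_j|^{-1+k})$ already gives $O(|\lambda_1|^{-1+k})=O(|\lambda_1-\lambda_2|^{\frac12+\gamma}|\lambda_1|^{-\frac32-\gamma+k})$ there, since $|\lambda_1-\lambda_2|\gtrsim|\lambda_1|$. With that one-sentence case split added, your writeup matches the paper's proof.
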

\begin{proof} 
Using \eqref{eqn:B defn}, Proposition~\ref{lem:Ainverse}, and Lemma~\ref{lem:M+S1inv}, we have  
\begin{multline*}
B_\pm(\lambda)^{-1}=\frac1\lambda
\big[I-\tfrac1\lambda A^\pm (\lambda)^{-1}S_1E_2^\pm(\lambda) S_1\big]^{-1}A^\pm (\lambda)^{-1}\\ = \frac1\lambda A^\pm (\lambda)^{-1}  +\frac1\lambda \sum_{j=1}^\infty (\tfrac1\lambda A^\pm (\lambda)^{-1}S_1E_2^\pm(\lambda) S_1)^j A^\pm (\lambda)^{-1}.
\end{multline*}
The series converges since $A^\pm (\lambda)^{-1}=O(1)$ and $E_2^\pm(\lambda)=O(|\lambda|^{1+k })$ by Proposition~\ref{lem:Ainverse} and Lemma~\ref{lem:M+S1inv} respectively. Moreover, we have 
$$
E_3^\pm(\lambda)= \frac1\lambda \sum_{j=1}^\infty (\tfrac1\lambda A^\pm (\lambda)^{-1}S_1E_2^\pm(\lambda) S_1)^j A^\pm (\lambda)^{-1}= O(|\lambda|^{-1+k}).
$$
This also implies the Lipschitz bound when $|\lambda_1-\lambda_2|\gtrsim |\lambda_2|$. 
The Lipschitz bound when $|\lambda_1-\lambda_2|\ll |\lambda_2|\approx |\lambda_1|$ follows by  noting that in this case
\be\label{eq:Ainv Lip}
A^\pm (\lambda_1)^{-1}-A^\pm (\lambda_2)^{-1}=O(|\lambda_1-\lambda_2||\lambda_1|^{-1})=O(|\lambda_1-\lambda_2|^{\frac12+\gamma}| \lambda_1|^{-\frac12-\gamma}),
\ee
$$
|\lambda_2^{-2}-\lambda_1^{-2}|\les |\lambda_1-\lambda_2|^{\frac12+\gamma}| \lambda_1|^{-\frac52-\gamma},
$$
and by using the bounds in Lemma~\ref{lem:M+S1inv} for  $E_2^\pm(\lambda) $.  
\end{proof}

We are now ready to obtain a suitable  expansion for $M^\pm(\lambda)^{-1}$ when zero is not regular. Note that Proposition~\ref{lem:Ainverse} and its proof gives detailed expansions for  $A^\pm (\lambda)^{-1}$, in particular, the projection $Q$ corresponds to the contribution of p-wave resonances and  the operator $[S_2v\mG_{1,0}v^*S_2]^{-1}$ to the threshold eigenspace, see Lemma~\ref{lem:eigenproj} below. 
\begin{lemma}\label{lem:Minv p} Under the hypothesis of Lemma~\ref{lem:Binv},   for $0<|\lambda|\ll 1$, we have
$$
M^\pm(\lambda)^{-1}= \frac1\lambda S_1 A^\pm (\lambda)^{-1} S_1  + E_4^\pm(\lambda), 
$$
where $E_4^\pm(\lambda)$ satisfies the same bounds as  $E_3^\pm(\lambda)$ in Lemma~\ref{lem:Binv}.  
\end{lemma}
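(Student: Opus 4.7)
The plan is to apply the Jensen--Nenciu formula from Lemma \ref{JNlemma} with $M = M^\pm(\lambda)$ and $S = S_1$, which gives
$$
M^\pm(\lambda)^{-1} = (M^\pm(\lambda)+S_1)^{-1} + (M^\pm(\lambda)+S_1)^{-1} S_1 B_\pm(\lambda)^{-1} S_1 (M^\pm(\lambda)+S_1)^{-1}.
$$
The invertibility of $M^\pm(\lambda)+S_1$ on $L^2$ comes from Lemma~\ref{lem:M+S1inv}, while the invertibility of $B_\pm(\lambda)$ on $S_1L^2$ comes from Lemma~\ref{lem:Binv}. The first summand on the right is an absolutely bounded operator of size $O(1)$, so it will be absorbed into $E_4^\pm(\lambda)$ (using $|\lambda|\ll 1$ and $-1+k<0$).

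Next I would plug in the two expansions we already have. Write $(M^\pm(\lambda)+S_1)^{-1} = T_1 + F^\pm(\lambda)$ via \eqref{eq:M+S1_E2exp}, where $F^\pm(\lambda) = O(\lambda^{1-})$ as an absolutely bounded operator, and use $S_1T_1 = T_1S_1 = S_1$ to obtain
$$
S_1(M^\pm(\lambda)+S_1)^{-1} = S_1 + S_1F^\pm(\lambda), \qquad (M^\pm(\lambda)+S_1)^{-1}S_1 = S_1 + F^\pm(\lambda)S_1.
$$
Similarly, from Lemma~\ref{lem:Binv}, $B_\pm(\lambda)^{-1} = \tfrac{1}{\lambda}A^\pm(\lambda)^{-1} + E_3^\pm(\lambda)$. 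Multiplying out the product $(S_1+F^\pm S_1)(\tfrac{1}{\lambda}A^\pm(\lambda)^{-1}+E_3^\pm)(S_1+S_1F^\pm)$ yields the leading term $\tfrac{1}{\lambda}S_1 A^\pm(\lambda)^{-1}S_1$ together with cross terms that I would group into $E_4^\pm(\lambda)$.

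For the pointwise bound on $E_4^\pm$, I would check each cross term: the two terms of the form $\tfrac{1}{\lambda}S_1 A^\pm(\lambda)^{-1}S_1 F^\pm$ and its mirror image are $O(\lambda^{0-})$; the term $\tfrac{1}{\lambda}F^\pm S_1 A^\pm(\lambda)^{-1}S_1 F^\pm$ is $O(\lambda^{1-})$; the three $E_3^\pm$ terms are controlled by $\|E_3^\pm\|\lesssim |\lambda|^{-1+k}$. Since $|\lambda|\ll 1$ and $0<k<1$, the dominant contribution is $O(\lambda^{-1+k})$, and the $O(1)$ piece from the first summand of Jensen--Nenciu is also absorbed.

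For the Lipschitz bound (under $\tfrac12\leq k<1$), I would expand each summand as a telescoping sum of differences and apply: the Lipschitz bound \eqref{eq:M+S1invlip12alpha} on $F^\pm$, the Lipschitz bound \eqref{eq:E3lip12alpha} on $E_3^\pm$, the Lipschitz bound \eqref{eq:Ainv Lip} on $A^\pm(\lambda)^{-1}$, and the scalar bound $|\lambda_1^{-1}-\lambda_2^{-1}|\lesssim |\lambda_1-\lambda_2|^{\frac12+\gamma}|\lambda_1|^{-\frac32-\gamma}$ (when $|\lambda_1-\lambda_2|\ll |\lambda_1|\approx |\lambda_2|$, using the mean value theorem; the complementary regime $|\lambda_1-\lambda_2|\gtrsim |\lambda_1|$ is handled by triangle inequality from the pointwise bounds). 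Each resulting cross term is smaller than the target size $|\lambda_1-\lambda_2|^{\frac12+\gamma}|\lambda_1|^{-\frac32-\gamma+k}$ because every factor except the $E_3^\pm$-piece carries at least one non-negative power of $|\lambda|$.

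The main obstacle is purely bookkeeping: the product expansion produces roughly a dozen cross terms, and for the Lipschitz estimate each of them must be broken into a difference in one factor times the other factors at fixed $\lambda$, then checked against the target exponent. Since the building blocks all have Lipschitz exponents of the form $|\lambda_1-\lambda_2|^{\frac12+\gamma}$ and the target exponent is the worst among them, no cancellation between cross terms is required, and the argument is a systematic (if tedious) verification.
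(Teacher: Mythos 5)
Your proposal is correct and follows essentially the same route as the paper: both apply Lemma~\ref{JNlemma} with $S=S_1$, expand $(M^\pm(\lambda)+S_1)^{-1}$ around $T_1$ using $S_1T_1=T_1S_1=S_1$, insert $B_\pm^{-1}=\tfrac1\lambda A^\pm(\lambda)^{-1}+E_3^\pm$, and verify that each cross term meets the pointwise bound $O(|\lambda|^{-1+k})$ and the Lipschitz bound $|\lambda_1-\lambda_2|^{\frac12+\gamma}|\lambda_1|^{-\frac32-\gamma+k}$ via \eqref{eq:M+S1invlip12alpha}, \eqref{eq:E3lip12alpha}, \eqref{eq:Ainv Lip}, and the elementary estimate on $|\lambda_1^{-1}-\lambda_2^{-1}|$. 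The only difference is cosmetic bookkeeping (the paper keeps $B_\pm^{-1}=O(|\lambda|^{-1})$ intact inside the cross terms rather than splitting it further).
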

\begin{proof} Using Lemma~\ref{JNlemma} with $M=M^\pm(\lambda)$ and $S=S_1$, and recalling that $T_1S_1=S_1T_1=S_1$, writing $(M^\pm(\lambda)+S_1)^{-1} =[(M^\pm(\lambda)+S_1)^{-1} -T_1]+T_1$,  we have 
\begin{multline*} 
M^\pm(\lambda)^{-1}=(M^\pm(\lambda)+S_1)^{-1}+(M^\pm(\lambda)+S_1)^{-1}S_1B_\pm^{-1}S_1
(M^\pm(\lambda)+S_1)^{-1}\\
= S_1B_\pm^{-1}S_1 + (M^\pm(\lambda)+S_1)^{-1} +   [(M^\pm(\lambda)+S_1)^{-1}-T_1]S_1B_\pm^{-1}S_1
(M^\pm(\lambda)+S_1)^{-1} \\  +  S_1B_\pm^{-1}S_1[(M^\pm(\lambda)+S_1)^{-1}-T_1].
\end{multline*}
Using Lemma~\ref{lem:Binv}, we have
$$
M^\pm(\lambda)^{-1}=\frac1\lambda S_1A^\pm (\lambda)^{-1}S_1 +E_4^\pm(\lambda), 
$$
where
\begin{multline*}
E_4^\pm(\lambda)=S_1E_3^\pm(\lambda)S_1 +(M^\pm(\lambda)+S_1)^{-1} +   [(M^\pm(\lambda)+S_1)^{-1}-T_1]S_1B_\pm^{-1}S_1
(M^\pm(\lambda)+S_1)^{-1} \\  +  S_1B_\pm^{-1}S_1[(M^\pm(\lambda)+S_1)^{-1}-T_1].
\end{multline*}
Since  by Lemma~\ref{lem:M+S1inv} the operator $(M^\pm(\lambda)+S_1)^{-1}$ satisfies better Lipschitz bounds than $ E_3^\pm(\lambda) $, and since the last two terms are similar, we concentrate on the term $$[(M^\pm(\lambda)+S_1)^{-1}-T_1]S_1B_\pm^{-1}S_1
(M^\pm(\lambda)+S_1)^{-1}.$$
By Lemma~\ref{lem:M+S1inv}, specifically\eqref{eq:M+S1_E2exp}, we have $(M^\pm(\lambda)+S_1)^{-1}=O(1)$, Combining this with \eqref{g def} we see that 
$ (M^\pm(\lambda)+S_1)^{-1}-T_1=O(|\lambda|^{1-})$. Also noting that $B_\pm^{-1}=O(|\lambda|^{-1})$ by Lemma~\ref{lem:Binv},   
we have
$$
[(M^\pm(\lambda)+S_1)^{-1}-T_1]S_1B_\pm^{-1}S_1
(M^\pm(\lambda)+S_1)^{-1} =O(|\lambda|^{0-})=O(|\lambda|^{-1+k}),\,\,0<k<1.
$$
The Lipschitz bound follows by using the bounds above and in addition the bounds in Lemma~\ref{lem:M+S1inv} for $ (M^\pm(\lambda)+S_1)^{-1}$, and by noting that 
$$ 
B_\pm(\lambda_1)^{-1}-B_\pm(\lambda_2)^{-1}= \frac1{\lambda_1}  A^\pm (\lambda_1)^{-1} -\frac1{\lambda_2}  A^\pm (\lambda_2)^{-1}  + E_3^\pm(\lambda_1)  -E_3^\pm(\lambda_2).  
$$
The contribution of $E_3^\pm$   is controlled by the bound in Lemma~\ref{lem:Binv}, specifically \eqref{eq:E3lip12alpha}.  For the contribution of the remaining  terms, we note
$$
	\frac1{\lambda_1}  A^\pm (\lambda_1)^{-1} -\frac1{\lambda_2}  A^\pm (\lambda_2)^{-1}=\bigg( \frac{1}{\lambda_1}-\frac{1}{\lambda_2} \bigg)A^\pm (\lambda_1)^{-1}-\frac{1}{\lambda_2}\bigg(A^\pm (\lambda_2)^{-1}-A^\pm (\lambda_1)^{-1}\bigg).
$$
Then \eqref{eq:Ainv Lip} suffices to control the second term, while the first term is controlled by using $(A^\pm(\lambda))^{-1}=O(1)$ by Proposition~\ref{lem:Ainverse} and the simple bound
$$
	|\lambda_1^{-1}-\lambda_2^{-1}|\les |\lambda_1-\lambda_2|^{\frac12+\gamma}| \lambda_1|^{-\frac32-\gamma}.
$$
\end{proof}

\section{Small energy dispersive estimates when zero is not regular}\label{sec:disp notfree}
In this section we study  the small energy portion of the Stone's formula, \eqref{Stone}, when zero is not regular:
$$
 \int_{-\infty}^{\infty}  e^{-it\lambda} \chi(\lambda)  [\mR_V^+ -\mR_V^-](\lambda)(x,y)\, d\lambda.
$$
In particular, we prove the following result.  
\begin{prop}\label{prop:disp est nonreg}
	Fix $0\leq \gamma<\frac12$. Assume that $|V(x)|\les \la x\ra^{-\beta-}$. 
	If zero is not regular and $\beta>3+2\gamma$, there is a finite-rank operator
		$F_t$ with 
		\begin{align}\label{eqn:pwave low int}
		\sup_{x,y} \left| \int_{-\infty}^{\infty}  e^{-it\lambda} \chi(\lambda)  [\mR_V^+ -\mR_V^-](\lambda)(x,y)\, d\lambda -F_t(x,y)  \right| &\les \la t \ra^{-\f12-\gamma}\la x\ra^\gamma\la y\ra^\gamma,
		\end{align}
		where  $\sup_{t,x,y} |F_t(x,y)|\les 1$, and if $|t|>2$, $\sup_{x,y} |F_t(x,y)|\les( \log |t| )^{-1}$.
		Furthermore, if there is an eigenvalue only at zero, the bound
		\eqref{eqn:pwave low int} remains valid with $F_t=0$.  
\end{prop}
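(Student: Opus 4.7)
The plan is to mirror the Section~\ref{sec:zeroregular} argument via the iterated symmetric resolvent identity \eqref{Rexpnew}. The three Born terms contribute the desired bound with no finite-rank piece by Lemma~\ref{lem:bornlow2} (the hypothesis $\beta>3+2\gamma$ is stronger than needed there). For the tail
$$
	\mathcal{T}^\pm(\lambda) := \mR_0^\pm V \mR_0^\pm v^* M^\pm(\lambda)^{-1} v \mR_0^\pm V \mR_0^\pm,
$$
I first use Lemma~\ref{lem:Minv p} to write $M^\pm(\lambda)^{-1} = \lambda^{-1} S_1 A^\pm(\lambda)^{-1} S_1 + E_4^\pm(\lambda)$. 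Choosing $k\in(\tfrac12+\gamma,1)$ (permitted since $\beta>3+2\gamma>2+2k$), the error $E_4^\pm$ satisfies the hypotheses of Lemma~\ref{prop:gen} with $T(\lambda)=E_4^\pm(\lambda)$, so it contributes $\la t\ra^{-\frac12-\gamma}\la x\ra^\gamma\la y\ra^\gamma$ with no persistent piece.

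The singular remainder involves $\lambda^{-1} S_1 A^\pm(\lambda)^{-1}S_1$ sandwiched between the outer factors. By Proposition~\ref{lem:Ainverse}, $A^\pm(\lambda)^{-1}=K+\Lambda^\pm(\lambda)$, where $K:=[S_2v\mG_{1,0}v^*S_2]^{-1}$ (extended by zero on $(S_1-S_2)L^2$) is independent of both $\lambda$ and the sign, while $\Lambda^\pm(\lambda)=\widetilde O_1((\log\lambda)^{-1})$ carries the p-wave resonance data and $\Lambda^+-\Lambda^-=\widetilde O_1((\log\lambda)^{-2})$. Using the algebraic identity \eqref{alg fact}, I split the $\pm$-difference of the singular tail into five contributions, one per factor. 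In each of the four \emph{outer-slot} terms, one $\mR_0^\pm$ is replaced by $\mR_0^+-\mR_0^-=\tfrac{i\lambda}{2}\mG_{1,1}+(E_0^+-E_0^-)(\lambda)=O(\lambda)$ (Lemma~\ref{lem:R0E0}); this factor of $\lambda$ neutralizes the $\lambda^{-1}$ singularity from $M^\pm(\lambda)^{-1}$. The resulting kernels admit Lipschitz bounds of the form used in Lemma~\ref{lem:bornlow2} (via Corollary~\ref{cor:lipbounds} on the outer free resolvents and $\partial_\lambda\Lambda^\pm=\widetilde O(\lambda^{-1}(\log\lambda)^{-2})$ on the remainder inside $M^{-1}$), yielding $\la t\ra^{-\frac12-\gamma}\la x\ra^\gamma\la y\ra^\gamma$.

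What remains is the middle-slot piece where $M^+(\lambda)^{-1}-M^-(\lambda)^{-1}$ contains $\lambda^{-1}S_1[\Lambda^+(\lambda)-\Lambda^-(\lambda)]S_1=\widetilde O_1(\lambda^{-1}(\log\lambda)^{-2})$. Define
$$
	F_t(x,y) := \int_{\R} e^{-it\lambda}\chi(\lambda)\,\mR_0^-(\lambda) V\mR_0^-(\lambda) v^* \frac{S_1[A^+(\lambda)^{-1}-A^-(\lambda)^{-1}]S_1}{\lambda} v\mR_0^+(\lambda) V\mR_0^+(\lambda)(x,y)\,d\lambda.
$$
This is finite rank in $(x,y)$ since $S_1$ is (Definition~\ref{def:resonances}). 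Absolute integrability of $\lambda^{-1}(\log\lambda)^{-2}$ near zero gives $\sup_{t,x,y}|F_t|\les 1$. For $|t|>2$, the substitution $u=t\lambda$ converts $\log\lambda$ on the principal scale into $\log(u/|t|)=-\log|t|+O(1)$; the resulting $(\log|t|)^{-2}$ prefactor multiplies an elementary integral $\int du/u\sim\log|t|$ for the sharp $(\log|t|)^{-1}$ decay. When zero is only an eigenvalue, $S_1=S_2$, and Proposition~\ref{lem:Ainverse} gives $A^\pm(\lambda)^{-1}\equiv K$ with $\Lambda^\pm\equiv 0$, so $A^+(\lambda)^{-1}-A^-(\lambda)^{-1}\equiv 0$ and $F_t\equiv 0$. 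The main obstacle I anticipate is the careful verification of the outer-slot bounds as $\gamma$ approaches $\tfrac12$: the cancellation $\mR_0^+-\mR_0^-=O(\lambda)$ must be tracked through a $\lambda$-derivative, using crucially that $K$ is $\lambda$-independent so that only $\Lambda^\pm$, $E_0^\pm$, and the outer resolvent factors absorb derivatives, which pushes the bounds of Lemmas~\ref{lem:R0E0} and~\ref{cor:lipbounds} to their sharp form.
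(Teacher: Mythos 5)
Your overall architecture (Born terms via Lemma~\ref{lem:bornlow2}, splitting $M^\pm(\lambda)^{-1}=\lambda^{-1}S_1A^\pm(\lambda)^{-1}S_1+E_4^\pm$, disposing of $E_4^\pm$ via Lemma~\ref{prop:gen} with $k=\tfrac12+\gamma+$, and exploiting the extra factor of $\lambda$ in $\mR_0^+-\mR_0^-$ on the outer slots) is sound and close to the paper's. The genuine gap is your definition of $F_t$. The operator
$$
F_t=\int_{\R}e^{-it\lambda}\chi(\lambda)\,\mR_0^-V\mR_0^-v^*\,\frac{S_1[A^+(\lambda)^{-1}-A^-(\lambda)^{-1}]S_1}{\lambda}\,v\mR_0^+V\mR_0^+\,d\lambda
$$
is \emph{not} finite rank merely because $S_1$ is: each integrand has rank at most $\mathrm{rank}(S_1)$, but its range $\mR_0^-(\lambda)V\mR_0^-(\lambda)v^*S_1L^2$ and co-range vary with $\lambda$, and an integral over $\lambda$ of rank-one operators $f_\lambda\otimes g_\lambda$ with $\lambda$-dependent vectors has infinite rank in general. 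A second, related problem is your claim that the integrand is $\widetilde O_1(\lambda^{-1}(\log\lambda)^{-2})$ uniformly in $x,y$: a $\lambda$-derivative falling on the outer resolvent factors grows like $(|\lambda||x-y|)^{1/2}$ (see \eqref{R01} and \eqref{tildeRprime}), so the integration-by-parts step behind the $(\log|t|)^{-1}$ decay of Lemma~\ref{lem:log decay} does not close without spatial weights, contradicting the required $\sup_{x,y}|F_t(x,y)|\les(\log|t|)^{-1}$.

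The fix is exactly the paper's extra step in \eqref{mostsing}: before isolating the $\pm$-difference of $A^\pm(\lambda)^{-1}$, iteratively replace each outer $\mR_0^\pm$ by $\mG_{0,0}+(\mR_0^\pm-\mG_{0,0})$. The all-$\mG_{0,0}$ term yields \eqref{eqn:Ft def}, where the only $\lambda$-dependence sits in the finite-dimensional middle factor $S_1[(A^+)^{-1}-(A^-)^{-1}]S_1/\lambda$; the $\lambda$-integral then produces a fixed matrix on $S_1L^2$ conjugated by the $\lambda$-independent maps $\mG_{0,0}V\mG_{0,0}v^*$, so $F_t$ is genuinely finite rank and Lemma~\ref{lem:log decay} applies verbatim. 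Every leftover term carries a factor $\mR_0^\pm-\mG_{0,0}=O(\lambda^{1-})$ that neutralizes the $\lambda^{-1}$ and is handled by Lemma~\ref{prop:gen2} without any $\pm$ cancellation; this replaces (and renders unnecessary) your four outer-slot applications of \eqref{alg fact}, which are otherwise workable since $\mu_0=O(\lambda)$ enjoys bounds at least as good as $\mR_0^\pm-\mG_{0,0}$. Your treatment of the eigenvalue-only case is correct once $F_t$ is defined as in \eqref{eqn:Ft def}.
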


In fact, when zero is not regular we explicitly construct the finite rank operator $F_t$, see \eqref{eqn:Ft def} below.

\begin{proof}[Proof of Proposition~\ref{prop:disp est nonreg}]

Recall \eqref{Rexpnew}. As in the regular case, Lemma~\ref{lem:bornlow2} suffices to control the first few terms arising in \eqref{Rexpnew}, hence we turn our attention to the tail.
Recall that by Lemma~\ref{lem:Minv p} we have  
$$
M^\pm(\lambda)^{-1}= \frac1\lambda S_1 A^\pm (\lambda)^{-1} S_1  + E_4^\pm(\lambda).
$$
The contribution of the second term in the Stone's formula is taken care of by Lemma~\ref{prop:gen} by taking $k=\frac12+\gamma+ $ in the error bounds for $E_4^\pm(\lambda)$. This requires that $\beta>3+2\gamma$.

It remains to consider the  contribution of 
$$
	\frac1\lambda   \mR_0^\pm V\mR_0^\pm v^*S_1A^{\pm}(\lambda)^{-1} S_1 v \mR^{\pm}_0 V\mR_0^\pm.
$$ 
If we replace at least one of the free resolvents with   $\mR_0^\pm -\mG_{0,0}$, we obtain further $\lambda$ smallness which allows us to obtain the desired $\la t\ra^{-\f12}$ bound  with minor modifications of  the  proof of Lemma~\ref{prop:gen}.   In particular, we note that
\begin{multline*}
	\frac1\lambda   \mR_0^\pm V\mR_0^\pm v^*S_1A^{\pm}(\lambda)^{-1} S_1 v \mR^{\pm}_0 V\mR_0^\pm\\
	=\frac1\lambda   \mR_0^\pm V\mG_{0,0} v^*S_1A^{\pm}(\lambda)^{-1} S_1 v \mR^{\pm}_0 V\mR_0^\pm
	+\frac1\lambda   \mR_0^\pm V(\mR_0^\pm-\mG_{0,0}) v^*S_1A^{\pm}(\lambda)^{-1} S_1 v \mR^{\pm}_0 V\mR_0^\pm.
\end{multline*}
Further, 
\begin{multline*}
	\frac1\lambda   \mR_0^\pm V\mG_{0,0} v^*S_1A^{\pm}(\lambda)^{-1} S_1 v \mR^{\pm}_0 V\mR_0^\pm\\
	=\frac1\lambda   \mR_0^\pm V\mG_{0,0} v^*S_1A^{\pm}(\lambda)^{-1} S_1 v \mG_{0,0} V\mR_0^\pm
	+\frac1\lambda   \mR_0^\pm V\mG_{0,0} v^*S_1A^{\pm}(\lambda)^{-1} S_1 v (\mR^{\pm}_0-\mG_{0,0}) V\mR_0^\pm.
\end{multline*}
Iterating this process, we may write  
\begin{multline}\label{mostsing}
	\frac1\lambda   \mR_0^\pm V\mR_0^\pm v^*S_1A^{\pm}(\lambda)^{-1} S_1 v \mR^{\pm}_0 V\mR_0^\pm\\
	=\frac1\lambda  \mG_{0,0} V \mG_{0,0} v^*S_1A^{\pm}(\lambda)^{-1} S_1 v \mG_{0,0} V\mG_{0,0} +\mathcal E_{x,y}(\lambda).  
\end{multline}
We first consider the  contribution of the first term to the Stone's formula.
When there is a p-wave resonance at zero, when $S_1-S_2\neq 0$, using Proposition~\ref{lem:Ainverse}, the $\pm$ difference easily yields a finite rank  term with logarithmic decay in time since
$$
\int_\R e^{-it\lambda} \chi(\lambda) \widetilde O_1\big(\frac1{\lambda\log^2\lambda}\big) d\lambda 
$$	
satisfies the desired bound by Lemma~\ref{lem:log decay}.

So when there is a `p-wave' resonance at zero, we can explicitly construct the operator $F_t$ by
\be	\label{eqn:Ft def}
	F_t:=  \int_{-\infty}^{\infty}  e^{-it\lambda} \chi(\lambda)  \mG_{0,0} V \mG_{0,0} v^*S_1 \bigg(\frac{A^{+}(\lambda)^{-1}-A^{-}(\lambda)^{-1}}{\lambda} \bigg) S_1 v \mG_{0,0} V\mG_{0,0}\, d\lambda.  
\ee

In the eigenvalue only case, when $S_1=S_2\neq0$, by Proposition~\ref{lem:Ainverse}
the leading term in \eqref{mostsing} disappears by $\pm$ cancellation since $A^\pm(\lambda)^{-1}$ is independent of 
the choice of sign in this case. Therefore $F_t=0$.

For the terms in $\mathcal E_{x,y}(\lambda)$, we have the following variant of Lemma~\ref{prop:gen} (we drop the $\pm$ signs since we don't rely on cancellation):
\begin{lemma}\label{prop:gen2} Fix $0\leq \gamma<\frac12$. Assume that $|V(x)|\les \la x\ra^{-2-2\gamma-}$. Let $T(\lambda)$ be an absolutely bounded operator  satisfying
(for $|\lambda|,|\lambda_1|,|\lambda_2|\les 1$ with $|\lambda_1|\leq |\lambda_2|$) 
$$
\big\| |T(\lambda ) |\big\|_{L^2\to L^2} \les |\lambda|^{-1},
$$
$$
\big\| |T(\lambda_1)-T(\lambda_2)|\big\|_{L^2\to L^2}\les |\lambda_1|^{- \frac32-\gamma} |\lambda_1-\lambda_2|^{\frac12+\gamma}.
$$
 Then
$$
\Big|\int_\R e^{-it\lambda} \chi(\lambda)\big[\mR_1    V \mR_2     v^*T   v \mR_3   V  \mR_4 \big](\lambda)(x,y) d\lambda \Big|\les \la t\ra^{-\frac12-\gamma} \la x\ra^{\gamma} \la y \ra^\gamma,
$$
where  $\mR_j=\mR_0$, $\mG_{0,0}$, or $\mR_0-\mG_{0,0}$, $j=1,2,3,4,$ and at least one of them is  $\mR_0-\mG_{0,0}$.
\end{lemma}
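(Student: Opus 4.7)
The plan is to adapt the proof of Lemma~\ref{prop:gen}, exploiting the $|\lambda|^{1-}$ smallness supplied by the factor $\mR_0-\mG_{0,0}$ (see Lemma~\ref{lem:R0simpleee}) to compensate for the stronger $|\lambda|^{-1}$ singularity of $T(\lambda)$ compared with the $|\lambda|^{-1+}$ hypothesis in Lemma~\ref{prop:gen}. I would split the integrand around $T$ by setting $\widetilde R_L(\lambda)(x,y_1):=[\mR_1 V \mR_2 v^*](\lambda)(x,y_1)$ and $\widetilde R_R(\lambda)(y_2,y):=[v\mR_3 V \mR_4](\lambda)(y_2,y)$, and write $\Phi(\lambda):=\widetilde R_L\, T\, \widetilde R_R$. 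Using the free resolvent bounds of Lemma~\ref{lem:R0simpleee}, the integral estimates Lemmas~\ref{lem:EGcor}, \ref{lem:spatial int1alpha}, \ref{lem:spa_int1}, and the decay $|V|\les \la\cdot\ra^{-2-2\gamma-}$, I would first establish
$$\|\widetilde R_L(\lambda)(x,\cdot)\|_{L^2_{y_1}}\cdot \|\widetilde R_R(\lambda)(\cdot,y)\|_{L^2_{y_2}}\les |\lambda|^{1-},$$
the $|\lambda|^{1-}$ gain coming from whichever slot contains $\mR_0-\mG_{0,0}$. Combined with $\|T(\lambda)\|_{L^2\to L^2}\les |\lambda|^{-1}$, Cauchy--Schwarz yields $|\Phi(\lambda)(x,y)|\les |\lambda|^{0-}$ uniformly in $x,y$; direct integration then handles $|t|\les 1$.

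For $|t|>1$ I would run the Lipschitz shift-by-$\pi/t$ argument of \eqref{lip12trick}. Decomposing $\Phi(\lambda_1)-\Phi(\lambda_2)$ by the product rule into three terms, the first and third pieces (difference on $\widetilde R_L$ or $\widetilde R_R$) inherit the standard Lipschitz bound $|\lambda_1-\lambda_2|^{\f12+\gamma}|\lambda_2|^{\f12-\gamma-}\la x\ra^\gamma\la y\ra^\gamma$ from Corollary~\ref{cor:lipbounds} as in the proof of Lemma~\ref{prop:gen}; together with $\|T\|\les |\lambda|^{-1}$ and the bounded $L^2$-norm of the surviving $\widetilde R$ factor, these give
$|\lambda_1-\lambda_2|^{\f12+\gamma}|\lambda|^{-\f12-\gamma-}\la x\ra^\gamma\la y\ra^\gamma$.
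The middle piece invokes the stronger hypothesis $\||T(\lambda_1)-T(\lambda_2)|\|\les |\lambda_1|^{-\f32-\gamma}|\lambda_1-\lambda_2|^{\f12+\gamma}$ and is rescued precisely by the pointwise $|\lambda|^{1-}$ gain in $\|\widetilde R_L\widetilde R_R\|$, producing the same upper bound. Substituting into \eqref{lip12trick} yields
$$\Big|\int_\R e^{-it\lambda}\chi(\lambda)\Phi(\lambda)(x,y)\,d\lambda\Big|\les |t|^{-\f12-\gamma}\la x\ra^\gamma\la y\ra^\gamma\int_{|\lambda|\les 1}|\lambda|^{-\f12-\gamma-}\,d\lambda\les |t|^{-\f12-\gamma}\la x\ra^\gamma\la y\ra^\gamma,$$
the final integral being finite precisely because $\gamma<\f12$.

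The main obstacle is the careful bookkeeping of the $(1+|x-y|^{0-})$ and $|x-y|^\gamma$ spatial growths produced by the resolvent kernels and their Lipschitz differences, and verifying that each such factor is absorbed by the $\la\cdot\ra^{-2-2\gamma-}$ decay of $V$ via Lemma~\ref{lem:EGcor} and its relatives. In particular, there are three sub-cases to verify depending on whether the distinguished factor $\mR_0-\mG_{0,0}$ appears in the $(\mR_1,\mR_2)$ slots only, the $(\mR_3,\mR_4)$ slots only, or in both; in the symmetric sub-case one obtains a pointwise $|\lambda|^{2-}$ gain (more than enough), while the two asymmetric sub-cases are handled identically by symmetry and produce the critical $|\lambda|^{-\f12-\gamma-}$ integrand controlled above. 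Once this is assembled, the argument reduces cleanly to the Lipschitz-trick scheme of Lemma~\ref{prop:gen}.
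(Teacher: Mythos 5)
Your proposal is correct and follows essentially the same route as the paper's proof: the same splitting into left/right composed kernels around $T$, the same use of the $|\lambda|^{1-}$ pointwise gain from the $\mR_0-\mG_{0,0}$ factor (via \eqref{eq:R0simple}) to offset the $|\lambda|^{-1}$ singularity of $T$ and its $|\lambda_1|^{-\frac32-\gamma}$ Lipschitz constant, and the same shift-by-$\pi/t$ Lipschitz argument with the integrable $|\lambda|^{-\frac12-\gamma-}$ singularity. The only cosmetic difference is that you enumerate the three placements of $\mR_0-\mG_{0,0}$ explicitly, whereas the paper treats one side and appeals to symmetry.
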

Note that the hypothesis  is satisfied by the mean value theorem if  $T(\lambda)=\widetilde O_1(\lambda^{-1})$ as  an absolutely bounded operator, in particular when $T(\lambda)=\tfrac1\lambda A^\pm(\lambda)^{-1}$.  
\begin{proof}
Let $\widetilde R:= v \mR_3  V  \mR_4$.
Since each $\mR_0$, $\mG_{0,0}$, and $\mR_0-\mG_{0,0}$ satisfies the bounds \eqref{eq:R0simple} and \eqref{R0lip12alpha}, the operator $\widetilde R$ satisfies the  bounds \eqref{tildeR} and \eqref{tildeRlip} in the proof of Lemma~\ref{prop:gen}.  In particular, the  $L^2_{y_1}$ norm of $\widetilde R(\lambda)(y_1,y)$ is bounded in  $y$ and $\lambda$, and the $L^2_{y_1}$ norm of $\widetilde R(\lambda_1)(y_1,y)-\widetilde R(\lambda_2)(y_1,y)$ is bounded by  $\la y\ra^{\gamma} |\lambda_1-\lambda_2|^{\frac12+\gamma} |\lambda_2|^{\frac12-\gamma-}$.

If $\mR_3$ or $\mR_4$ is equal to $\mR_0-\mG_{0,0}$. Then,   by \eqref{eq:R0simple}, 
$$ \mR_0-\mG_{0,0}= O(|\lambda|^{1-}(1+|x-y|^{0-}). $$ 
Therefore $\widetilde R$ satisfies the following improved pointwise bound 
\be\label{tildeRimp}
|\widetilde R(\lambda)(y_1,y)| \les |\lambda|^{1-}  (1+|y_1-y|^{0-})\la y_1\ra^{-1-}.
\ee
In particular, the  $L^2_{y_1}$ norm of $\widetilde R(\lambda)(y_1,y)$ is bounded by 
$|\lambda|^{1-}$.

Using these bounds and the hypothesis for $T$, we see that (with $\Gamma:=\mR_1  V \mR_2  v^*T   v \mR_3   V  \mR_4$)
$$
|\Gamma|\les |\lambda|^{0-}.
$$
This implies the uniform bound when $t$ is small. Also
using this  in the case $|\lambda_1-\lambda_2|\gtrsim |\lambda_2|$ we obtain 
$$
|\Gamma(\lambda_1)-\Gamma(\lambda_2)|\les |\lambda_1-\lambda_2|^{\frac12+\gamma} |\lambda_1|^{-\frac12-\gamma-}
$$
When  $|\lambda_1-\lambda_2|\ll|\lambda_2|\approx |\lambda_1|,$ we estimate  
$\Gamma(\lambda_1)-\Gamma(\lambda_2)$ by 
\begin{multline*}
|\lambda_1|^{1-} |\lambda_1|^{-\frac32-\gamma} |\lambda_1-\lambda_2|^{\frac12+\gamma} 
+|\lambda_1-\lambda_2|^{\frac12+\gamma} |\lambda_1|^{\frac12-\gamma-} \la x\ra^\gamma   \la y\ra^\gamma |\lambda_1|^{-1} \\
\les |\lambda_1-\lambda_2|^{\frac12+\gamma} |\lambda_1|^{-\frac12-\gamma-} \la x\ra^\gamma   \la y\ra^\gamma.  
\end{multline*}
The first summand above corresponds to the case when the difference is on $T$ and the second summand corresponds to the remaining cases. Combining these bounds for $0\leq \gamma<\frac12$  we have 
$$
|\Gamma(\lambda_1)-\Gamma(\lambda_2)|\les \la x\ra^{\gamma}\la y\ra^{\gamma} |\lambda_1-\lambda_2|^{\frac12+\gamma} |\lambda_1|^{-1+},\,\,|\lambda_j|\ll 1, j=1,2.
$$
Therefore, by applying the Lipschitz argument as in \eqref{lip12trick}, we bound the integral  by
$  \la t\ra^{-\frac12-\gamma} \la x\ra^{\gamma}\la y\ra^{\gamma}. $   
\end{proof}

This finishes the proof of Proposition~\ref{prop:disp est nonreg}.
\end{proof}

\section{Threshold characterization}\label{sec:TC}

The characterization of the threshold is similar to the characterization for the massive case in \cite{egd}.  See \cite{EGT} for the three dimensional threshold characterization.  These results have roots in the characterizations for Schr\"odinger operators may be found in \cite{ES,eg2,EGG4}.

\begin{lemma} \label{lem:S1char} Assume that $|V(x)|\les \la x\ra^{-\beta}$ for some $\beta>2$.	
	If $\phi \in $ ker$(T)$, then $\phi =Uv\psi $ with $\psi$ a distributional solution to $H\psi=0$ and $\psi\in L^p(\R^2)$ for all $p>2$.
	
\end{lemma}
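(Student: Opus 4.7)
The plan is to construct $\psi$ explicitly from $\phi$, verify its $L^p$ membership via Hardy--Littlewood--Sobolev, and then check the equation $H\psi=0$ distributionally.

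First, I unwind the definition of $T$. Since $T=U+v\mG_{0,0}v^*$ and $U^2=I$, the equation $T\phi=0$ rearranges to $\phi=-Uv\mG_{0,0}v^*\phi$. I define
$$
\psi:=-\mG_{0,0}v^*\phi=-\mR_0(0)v^*\phi,
$$
so $\phi=Uv\psi$ by construction, and moreover $v\psi=U\phi$ (using $U^2=I$). This is the only reasonable candidate, since it expresses $\psi$ as the free resolvent at zero applied to a source.

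Second, I show $\psi\in L^p(\R^2)$ for every $p>2$. Since $\phi\in L^2$ (it lives in the Hilbert space on which $T$ is defined) and $|v(x)|\les\la x\ra^{-\beta/2}$ with $\beta>2$, H\"older's inequality gives $v^*\phi\in L^q(\R^2)$ for all $q\in (1,2]$: indeed, $\|v^*\phi\|_q\leq\|v\|_{2q/(2-q)}\|\phi\|_2$ and $\|v\|_{2q/(2-q)}$ is finite whenever $q>4/(\beta+2)$, which holds for every $q>1$ when $\beta>2$. The kernel of $\mG_{0,0}$, as computed in \eqref{G00def}, satisfies $|\mG_{0,0}(x,y)|\les|x-y|^{-1}$, so $|\psi(x)|$ is pointwise dominated by the Riesz potential $I_1(|v^*\phi|)$. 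The Hardy--Littlewood--Sobolev inequality then delivers $I_1\colon L^q(\R^2)\to L^p(\R^2)$ whenever $1<q<2$ and $\tfrac1p=\tfrac1q-\tfrac12$; letting $q$ range over $(1,2)$ covers every $p\in(2,\infty)$.

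Third, I verify the distributional equation. The crux is that $\mG_{0,0}$ is a fundamental solution for $D_0$: writing $\mG_{0,0}=-i\alpha\cdot\nabla G_0$ and using the anticommutation relations \eqref{eqn:anticomm}, one has $D_0^2=-\Delta$, so
$$
D_0\mG_{0,0}=D_0(-i\alpha\cdot\nabla G_0)=D_0^2 G_0=-\Delta G_0=\delta_0\cdot\mathbbm 1_{\mathbb C^2}.
$$
Thus $D_0\psi=-v^*\phi$ in $\mathcal D'(\R^2)$, which is legitimate because $v^*\phi\in L^q$ for some $q>1$ makes the convolution against $|x|^{-1}$ an honest locally integrable function. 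For the potential piece, $V\psi=v^*Uv\psi=v^*U(U\phi)=v^*\phi$, using the identity $v\psi=U\phi$ established above. Adding, $H\psi=D_0\psi+V\psi=-v^*\phi+v^*\phi=0$.

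The only delicate step is the distributional identity $D_0\mG_{0,0}f=f$; the remaining bookkeeping is algebraic once $v^*\phi$ has enough integrability to apply HLS, which is exactly guaranteed by the decay assumption $\beta>2$.
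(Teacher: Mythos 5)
Your construction of $\psi=-\mG_{0,0}v^*\phi$, the identity $v\psi=U\phi$, and the distributional verification $D_0\psi=-v^*\phi$, $V\psi=v^*\phi$ are exactly the paper's argument, and they are correct. The $L^p$ step is where you diverge slightly (H\"older into $L^q$, $q\in(1,2)$, followed by Hardy--Littlewood--Sobolev, versus the paper's weighted fractional-integral bound), and both routes give $\psi\in L^p$ for $2<p<\infty$.

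There is, however, one genuine gap: you never reach $p=\infty$. HLS fails at the endpoint $q=2$, $p=\infty$, so your argument stops at finite $p$. The paper's statement ``for all $p>2$'' is meant to include $p=\infty$ --- the very next lemma (Lemma~\ref{lem:S1char2}) opens with ``By the last lemma, we have $\psi\in L^\infty$,'' and elsewhere the paper works with $\cap_{p\in(2,\infty]}L^p$ --- so the $L^\infty$ bound is part of the content being proved. The paper closes this by a bootstrap that uses \emph{both} factors of $v$: since $\phi=Uv\psi$, one has $\psi=-\mG_{0,0}V\psi$, and hence
\begin{equation*}
|\psi(x)|\les \int_{\R^2}\frac{\la y\ra^{-2-}}{|x-y|}\,|\psi(y)|\,dy
\les \|\psi\|_{L^3}\,\bigl\| |x-\cdot|^{-1}\la\cdot\ra^{-2-}\bigr\|_{L^{3/2}}\les 1,
\end{equation*}
where the $L^{3/2}$ norm is finite uniformly in $x$ because the full decay $\la y\ra^{-2-}$ of $V$ (rather than the $\la y\ra^{-1-}$ of a single $v$) compensates for the local singularity $|x-y|^{-1}$. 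Appending this one-line bootstrap to your HLS argument (which already supplies $\psi\in L^3$) completes the proof.
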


\begin{proof}
	
	Take $\phi \in $ ker$(T)$, $\phi \in L^2$.  Then
	$$
	0=T\phi =U\phi +v\mG_{0,0}v^* \phi =0 \quad \Rightarrow \quad \phi =-Uv\mG_{0,0}v^* \phi .
	$$
	Define $\psi:=-\mG_{0,0}v^*\phi $, then $\phi =Uv\psi$.  Now, with $H=D_0+V=-i\alpha \cdot \nabla+V$,
	$$
	H\psi=(-i\alpha \cdot \nabla+V)\psi=-i\alpha \cdot \nabla \psi+v^*Uv\psi
	=i\alpha \cdot \nabla(\mG_{0,0}v^*\phi )+v^*\phi 
	$$
	Here, recalling \eqref{G00def} and \eqref{eqn:anticomm}, we have
	$$
	i\alpha \cdot \nabla(\mG_{0,0}v^*\phi )=i\alpha \cdot \nabla(-i\alpha \cdot \nabla G_0 v^*\phi )=\Delta (-\Delta)^{-1}v^*\phi =-v^*\phi 
	$$
	distributionally.  So,
	$$
	H\psi=  i\alpha \cdot \nabla(\mG_{0,0}v^*\phi )+v^*\phi =-v^*\phi +v^*\phi =0.
	$$
	That is, if $\phi \in$ ker$(T)$ we have $H\psi=0$.  Now, to show that $\psi\in L^p$, we have $\psi=-\mG_{0,0}v^*\phi $ with $\phi \in L^2$.  We can bound $|\mG_{0,0}(x,y)|\les |x-y|^{-1}$ to employ a fractional integral operator argument.  So that,
	$$
	\|\psi\|_q=
	\|\mG_{0,0}v^*\phi  \|_q \les \bigg\| \int_{\R^2} \frac{\la y\ra^{-1-}}{|x-y|} |\phi (y)|\, dy		\bigg\|_q  \les \|\phi  \|_2
	$$
	for $2<q<\infty$.  Furthermore, since $\phi =Uv\psi$ we have 
	$\psi=-\mG_{0,0}V\psi$, and
	$$
	|\psi|\leq |\mG_{0,0}V\psi|\les \int_{\R^2} \frac{\la y\ra^{-2-}}{|x-y|} |\psi(y)|\, dy \les \|\psi\|_{3} \| |x-\cdot|^{-1} \la \cdot \ra^{-2-}\|_{\f32}\les 1.
	$$
	Thus, $\psi\in L^p$ for all $p>2$.

\end{proof}

\begin{lemma}\label{lem:S1char2} Assume that $|V(x)|\les \la x\ra^{-\beta}$ for some $\beta>2$.	
	If $\phi=Uv\psi\in S_1L^2$ then
	$$
	\psi(x)=\frac{-i\alpha \cdot x}{2\pi \la x\ra^2}\mG_{1,1} v^* \phi  +\Gamma_2
	$$
	where $\Gamma_2\in L^2\cap L^\infty$.
\end{lemma}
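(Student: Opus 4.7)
The plan is to start from the identity $\psi = -\mG_{0,0} v^* \phi$ provided by Lemma~\ref{lem:S1char}, and to subtract from it the precise far-field asymptotic term $\frac{-i\alpha \cdot x}{2\pi \la x\ra^2}\mG_{1,1}v^*\phi$. Recall that $\mG_{1,1}$ has the constant kernel $1$, so $\mG_{1,1}v^*\phi=\int_{\R^2}v^*(y)\phi(y)\,dy$ is a constant $\mathbb C^2$-vector; it is finite by Cauchy--Schwarz since $v\in L^2$ and $\phi\in L^2$. Defining
$$
\Gamma_2(x):=\psi(x)-\frac{-i\alpha\cdot x}{2\pi\la x\ra^2}\mG_{1,1}v^*\phi=-\int_{\R^2}K(x,y)\,v^*(y)\phi(y)\,dy, \qquad K(x,y)=\frac{i\alpha\cdot(x-y)}{2\pi|x-y|^2}-\frac{i\alpha\cdot x}{2\pi\la x\ra^2},
$$
the lemma reduces to proving $\Gamma_2\in L^2\cap L^\infty$.

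The $L^\infty$ bound is immediate: the proof of Lemma~\ref{lem:S1char} already shows $\psi\in L^\infty$, and the subtracted term is bounded since $|x|/\la x\ra^2\leq \tfrac12$, so $\Gamma_2$ is the difference of two bounded functions.

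For the $L^2$ bound I would decompose
$$
\frac{x-y}{|x-y|^2}-\frac{x}{\la x\ra^2}=\underbrace{\Big(\frac{x-y}{|x-y|^2}-\frac{x}{|x|^2}\Big)}_{(I)}+\underbrace{\frac{x}{|x|^2\la x\ra^2}}_{(II)}
$$
(handling $|x|\leq 2$ separately via the locally $L^2$ singularity $|x-y|^{-1}$). A direct algebraic computation gives $|(I)|\lesssim |y|/(|x-y|\,|x|)$ uniformly in $x\neq 0$, $x\neq y$, while $|(II)|\lesssim \la x\ra^{-3}$. Combined, these yield
$$
|K(x,y)|\lesssim \indicator_{\{|y|\leq |x|/2\}}\frac{\la y\ra}{\la x\ra^2}+\indicator_{\{|y|>|x|/2\}}\Big(\frac{1}{|x-y|}+\frac{1}{\la x\ra}\Big)+\indicator_{\{|x|\leq 2\}}\Big(\frac{1}{|x-y|}+1\Big).
$$
Using $|v^*(y)|\lesssim \la y\ra^{-1-}$ (which follows from $\beta>2$) and $\phi\in L^2$, Cauchy--Schwarz together with the spatial integral estimates of Section~\ref{sec:int ests} then give $\|\Gamma_2\|_{L^2}\lesssim \|\phi\|_{L^2}$.

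The main obstacle is the $L^2$ bound on the far-field piece: the subtraction was designed to cancel the leading $x/|x|^2$ asymptotic of $\mG_{0,0}$, which produces a logarithmic $L^2$ divergence, and the calculation above must verify that this cancellation is quantitatively tight (the factor $|y|$ in the bound on $(I)$ is crucial, since it combines with $\la y\ra^{-1-}$ from $v^*$ to make the resulting $x$-integral convergent). The regime $|y|>|x|/2$ with both large must be treated separately, where no direct cancellation occurs and one relies solely on the decay of $v^*$ to absorb the $|x-y|^{-1}$ singularity of the first term of $K$.
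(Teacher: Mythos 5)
Your proposal is correct and follows essentially the same route as the paper: both start from $\psi=-\mG_{0,0}v^*\phi$ and subtract the kernel $\frac{\alpha\cdot x}{\la x\ra^2}$ to isolate the constant-vector term $\mG_{1,1}v^*\phi$, obtaining $L^\infty$ from Lemma~\ref{lem:S1char} and reducing everything to the $L^2$ membership of the remainder. The only difference is that the paper disposes of that $L^2$ step by citing Lemma~7.3 of \cite{egd}, whereas you prove it directly via the (exact, in two dimensions) identity $\bigl|\frac{x-y}{|x-y|^2}-\frac{x}{|x|^2}\bigr|=\frac{|y|}{|x-y|\,|x|}$ and a region decomposition, which is a sound and self-contained substitute.
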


\begin{proof}
	
	By the last lemma, we have $\psi \in L^\infty$.  We recall that $\psi=-\mG_{0,0}v^* \phi$ and the kernel of $\mG_{1,1}$ is $1$, so
	\begin{multline*}
	\psi(x)=-\frac{i}{2\pi} \int_{\R^2} \frac{\alpha \cdot (x-y)}{|x-y|^2}v^*(y) \phi(y)\, dy\\
	=-\frac{i}{2\pi} \int_{\R^2} \bigg[\frac{\alpha \cdot (x-y)}{|x-y|^2}-\frac{\alpha \cdot x}{\la x\ra^2} \bigg]v^*(y) \phi(y)\, dy
	-\frac{i\alpha \cdot x}{2\pi \la x\ra^2} \mG_{1,1}v^* \phi.
	\end{multline*}
	The first term is in $L^2$ (see Lemma~7.3 in \cite{egd}).  Combining this with $\psi\in L^\infty$ finishes the proof.   We note that the assumption that $\beta>2$ suffices here, the logarithmic terms in the massive case considered in \cite{egd} required further decay of the potential.  These terms do not occur in the massless case, specifically we need only (68) in \cite{egd} for which $\beta>2$ is sufficient.

\end{proof}

\begin{corollary}\label{cor:dim}
	
	The rank of $S_1$ is at most two plus the dimension of the eigenspace at zero.
	
\end{corollary}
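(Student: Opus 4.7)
The plan is to exhibit $S_1 L^2$ as the sum of a subspace corresponding to genuine zero-energy eigenfunctions and a complement of dimension at most two, using the structural representation from Lemma~\ref{lem:S1char2}.

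First, I would introduce the linear map $L : S_1 L^2 \to \mathbb{C}^2$ defined by
$$L(\phi) := \mG_{1,1} v^* \phi = \int_{\mathbb{R}^2} v^*(y)\phi(y)\,dy,$$
whose range is clearly contained in $\mathbb{C}^2$, so $\dim(\operatorname{Range} L) \leq 2$. The goal is then to identify $\ker L$ with the eigenspace at zero energy (via the correspondence $\phi \leftrightarrow \psi$ from Lemma~\ref{lem:S1char}), after which the rank-nullity theorem gives $\dim S_1 L^2 \leq \dim(\ker L) + 2$.

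Next I would verify that the correspondence $\phi = Uv\psi \leftrightarrow \psi = -\mG_{0,0}v^*\phi$ is a bijection between $S_1 L^2$ and its image in the space of distributional solutions of $H\psi=0$: injectivity follows because $\psi = 0$ forces $\phi = Uv\psi = 0$. For $\phi \in \ker L$, Lemma~\ref{lem:S1char2} gives
$$\psi(x) = \frac{-i\alpha \cdot x}{2\pi\la x\ra^2}\mG_{1,1}v^*\phi + \Gamma_2 = \Gamma_2 \in L^2,$$
so $\psi$ is an $L^2$ solution of $H\psi=0$, i.e.\ a zero-energy eigenfunction. Conversely, if $\psi$ is such an eigenfunction, then the corresponding $\phi = Uv\psi$ lies in $\ker T = S_1 L^2$ (since a short computation with the definition of $T$ and the identities $V = v^*Uv$, $\mG_{0,0} = -i\alpha\cdot\nabla G_0$ shows $T\phi = 0$), and the leading singular term $\frac{-i\alpha \cdot x}{2\pi\la x\ra^2}\mG_{1,1}v^*\phi$ in the representation of $\psi$ must vanish since $\psi \in L^2$, so $L(\phi)=0$. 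This yields the bijection between $\ker L$ and the zero-energy eigenspace.

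The only mildly delicate point is verifying that the leading term $\frac{-i\alpha\cdot x}{2\pi\la x\ra^2}\mG_{1,1}v^*\phi$ cannot vanish through accidental cancellation without $\mG_{1,1}v^*\phi$ itself being zero in $\mathbb{C}^2$; this is immediate because the matrix-valued factor $\frac{-i\alpha\cdot x}{2\pi\la x\ra^2}$ is not identically zero (indeed, at generic $x$ it is an invertible Hermitian matrix times a nonzero scalar), so the product of that matrix with the constant vector $\mG_{1,1}v^*\phi$ vanishes as a function of $x$ only when the vector itself is zero. Combining the identifications above yields
$$\dim S_1 L^2 = \dim(\ker L) + \dim(\operatorname{Range} L) \leq \dim(\text{zero eigenspace}) + 2,$$
which is the desired bound.
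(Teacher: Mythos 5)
Your argument is correct and is essentially the paper's own: the corollary is read off from Lemma~\ref{lem:S1char2} by applying rank--nullity to the map $\phi\mapsto\mG_{1,1}v^*\phi\in\C^2$ on $S_1L^2$ and identifying its kernel with the zero-energy eigenspace, which is exactly the content of Lemmas~\ref{lem:S1char}, \ref{lem:S1char2}, and \ref{lem:S2class}. The only imprecision is in your ``delicate point'': what is actually needed is not that $\frac{-i\alpha\cdot x}{2\pi\la x\ra^2}\,c$ vanishes identically only when $c=0$, but that it belongs to $L^2(\R^2)$ only when $c=0$ (since $|(\alpha\cdot x)c|=|x|\,|c|$ by the anticommutation relations, this term fails to be square integrable at infinity in two dimensions for $c\neq0$), and it is this non-membership in $L^2$ that lets you pass from $\psi\in L^2$ to $L(\phi)=0$.
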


We note that the at most two dimensional space of resonances correspond to the p-wave resonances in the massive Dirac, \cite{egd}, and Schr\"odinger \cite{eg2} operators.  We again note that there are no `s-wave' resonances in the massless case.

\begin{lemma}
	Assume that $|V(x)|\les \la x\ra^{-\beta}$ for some $\beta>2$.	
	If $H\psi=0$ with $\psi\in L^2+\cap_{p\in (2,\infty]} L^p$, then $\phi=Uv\psi\in S_1L^2$, i.e. $T\phi=0$.
	
\end{lemma}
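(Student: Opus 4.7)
The strategy is to reverse the construction of Lemma~\ref{lem:S1char}: given $\psi$ annihilated by $H$ with the stated integrability, I would set $\phi := Uv\psi$, verify $\phi\in L^2$, establish the identity $\psi = -\mG_{0,0}V\psi$, and then read off $T\phi=0$ from a short algebraic calculation using $U^2 = I$.

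For the $L^2$ bound on $\phi$, write $\psi = \psi_1 + \psi_2$ with $\psi_1\in L^2$ and $\psi_2\in\cap_{p\in(2,\infty]}L^p$.  Since $|v(x)|\les \la x\ra^{-\beta/2}$ with $\beta>2$, the weight $v$ lies in $L^\infty\cap L^2$, so H\"older gives $v\psi_1\in L^2$ (pairing $L^\infty\cdot L^2$) and $v\psi_2\in L^2$ (pairing $L^2\cdot L^\infty$).  Hence $\phi=Uv\psi\in L^2$.

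The heart of the argument is the distributional identity $\psi = -\mG_{0,0}V\psi$.  Since $\mG_{0,0}=D_0 G_0$ and $-\Delta G_0 = I$, one has $D_0\mG_{0,0} = \mG_{0,0}D_0 = I$ on suitably decaying objects, so one expects to invert $D_0\psi = -V\psi$ and obtain the claimed formula.  To make this rigorous, set $u:=\psi+\mG_{0,0}V\psi$ and compute $D_0 u = D_0\psi + V\psi = 0$ distributionally; then $-\Delta u = D_0^2 u = 0$, so each component of $u$ is harmonic on $\mathbb R^2$.  A fractional-integral argument exactly like the one in the proof of Lemma~\ref{lem:S1char}, using $|\mG_{0,0}(x,y)|\les |x-y|^{-1}$ together with the decay $|V|\les \la y\ra^{-\beta}$, $\beta>2$, shows that $\mG_{0,0}V\psi$ is a bounded function that vanishes at infinity, so $u$ is a harmonic function sharing the integrability class of $\psi$.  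Liouville's theorem (harmonic and either $L^p$ for some finite $p$, or bounded and decaying at infinity) then forces $u\equiv 0$, giving $\psi = -\mG_{0,0}V\psi$.

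With this identity in hand, the conclusion is immediate:
\begin{equation*}
T\phi \;=\; U\phi + v\mG_{0,0}v^*\phi \;=\; v\psi + v\mG_{0,0}V\psi \;=\; v\psi - v\psi \;=\; 0,
\end{equation*}
using $U\phi = U^2 v\psi = v\psi$ and $v^*\phi = v^*Uv\psi = V\psi$.  The main obstacle is the middle step: carefully justifying $\psi = -\mG_{0,0}V\psi$ for the class $L^2 + \cap_{p>2}L^p$, where one has to (i) interpret $\mG_{0,0}V\psi$ as a well-defined locally integrable function and (ii) guarantee the harmonic remainder $u$ has enough decay or integrability for Liouville to kill it.  The decay $\beta>2$ is exactly what is needed so that $V\psi$ has enough integrability to push $\mG_{0,0}V\psi$ into a space where the harmonic $u$ must vanish.
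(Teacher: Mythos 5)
Your proposal is correct and follows essentially the same route as the paper: both verify $\phi=Uv\psi\in L^2$, show the remainder $u=\psi+\mG_{0,0}V\psi$ is annihilated by $D_0$ and then eliminated by its integrability class, and conclude with the identity $T\phi=U\phi+v\mG_{0,0}v^*\phi=v\psi-v\psi=0$. The only cosmetic difference is that you pass through harmonicity of $u$ and Liouville, while the paper asserts directly that $D_0 u=0$ forces $u$ to be a constant vector and then kills it by integrability; your detour via $-\Delta u=D_0^2u=0$ is, if anything, the more carefully justified version of the same step.
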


\begin{proof}
	
	Using $H\psi=0$, we have $i\alpha \cdot \nabla \psi =V\psi =v^* \phi$.  We first show that $\psi=-\mG_{0,0}v^*\phi$.  Since $\phi=Uv\psi \in L^2$, we have that $v^*\phi \in L^1$.  Recalling \eqref{G00def}, $\mG_{0,0}=-i\alpha \cdot \nabla G_0$, so
	\begin{align*}
	-i\alpha \cdot \nabla \big[ \psi +\mG_{0,0}v^* \phi \big]
	&= -i\alpha \cdot \nabla \psi+ \Delta G_0 v^* \phi=v^*\phi-v^*\phi =0.
	\end{align*}
	Thus,
	$$
	-i\alpha \cdot \nabla \big[ \psi +\mG_{0,0}v^* \phi \big]=0 \quad
	\Rightarrow \quad  \psi +\mG_{0,0}v^* \phi = (c_1, c_2)^T.
	$$
	Since $\psi \in L^2+\cap_{p\in (2,\infty]} L^p$ and $\mG_{0,0}v^*\phi\in L^p$ for all $p>2$ by the proof of Lemma~\ref{lem:S1char}, we have  $(c_1,c_2)^T\in L^2+ \cap_{p\in (2,\infty]}L^p$. Therefore,    $c_1=c_2=0$, and $\psi=-\mG_{0,0}v^*\phi$ as desired.  
	
	Next, to show $T\phi=0$, we note that $U\phi=U^2v\psi =v\psi$. Also 
	recalling that $T=U+v\mG_{0,0}v^*$ and $\psi=-\mG_{0,0}v^*\phi$, we have 
	$$
	T\phi = U \phi +v\mG_{0,0}v^* \phi= v\psi-v\psi=0.
	$$
\end{proof}

Recall that $S_2$ is the projection onto the kernel of $S_1v\mG_{1,1}v^* S_1$. We have the following classification for $S_2L^2$:

\begin{lemma}\label{lem:S2class} Assume that $|V(x)|\les \la x\ra^{-\beta}$ for some $\beta>2$.	 Fix $\phi=Uv\psi \in S_1L^2$.  Then $\phi\in S_2 L^2$ if and only if 
	$\psi \in L^2$.
	
\end{lemma}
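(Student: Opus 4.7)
The plan is to reduce membership in $S_2L^2$ to the vanishing of the constant vector $c:=\mG_{1,1}v^*\phi=\int_{\R^2}v^*(y)\phi(y)\,dy\in\mathbb C^2$, and then read off the $L^2$ behavior of $\psi$ directly from Lemma~\ref{lem:S1char2}. Nothing beyond the Clifford identity and the decomposition already supplied by that lemma should be needed.

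First I would observe that $\mG_{1,1}$ has integral kernel identically $1$, so $\mG_{1,1}v^*\phi$ is just the constant vector $c$ above. A direct pairing gives
$$\la S_1v\mG_{1,1}v^*S_1\phi,\phi\ra=\la vc,\phi\ra=c^*c=|c|^2,$$
showing that $S_1v\mG_{1,1}v^*S_1$ is positive semidefinite on $S_1L^2$. Its kernel is therefore precisely $\{\phi\in S_1L^2:c=0\}$, and so $\phi\in S_2L^2$ if and only if $\mG_{1,1}v^*\phi=0$.

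Next, Lemma~\ref{lem:S1char2} furnishes the decomposition
$$\psi(x)=\frac{-i\alpha\cdot x}{2\pi\la x\ra^{2}}\,c+\Gamma_2(x),\qquad \Gamma_2\in L^2\cap L^\infty.$$
If $c=0$ then $\psi=\Gamma_2\in L^2$, giving one direction. For the converse I would use the Clifford identity $(\alpha\cdot x)^2=|x|^2\mathbbm 1_{\mathbb C^2}$ (which follows from \eqref{eqn:anticomm} after symmetrization) to compute
$$\bigg|\frac{\alpha\cdot x}{\la x\ra^{2}}\,c\bigg|^2=\frac{|x|^2}{\la x\ra^{4}}\,|c|^2.$$
Since $\int_{\R^2}\tfrac{|x|^2}{\la x\ra^{4}}\,dx$ diverges logarithmically at infinity, the leading term lies in $L^2$ only when $c=0$. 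As $\Gamma_2\in L^2$ is harmless, the hypothesis $\psi\in L^2$ forces $c=0$, hence $\phi\in S_2L^2$.

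The only bookkeeping point is confirming that $\ker(S_1v\mG_{1,1}v^*S_1)$ inside $S_1L^2$ really coincides with $\{c=0\}$ rather than a weaker condition, but positivity of the operator on $S_1L^2$ makes this automatic. I do not anticipate a genuine obstacle: the argument hinges entirely on the Clifford computation $(\alpha\cdot x)^2=|x|^2\mathbbm 1$ converting the anisotropic leading profile into an isotropic $|x|^2/\la x\ra^4$ weight, whose failure of $L^2$ integrability at infinity is the single mechanism driving the equivalence.
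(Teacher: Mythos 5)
Your proof is correct and follows essentially the same route as the paper, which also reduces the statement to the equivalence ``$\psi\in L^2$ iff $\mG_{1,1}v^*\phi=0$'' via the decomposition in Lemma~\ref{lem:S1char2} and the identification of $\ker(S_1v\mG_{1,1}v^*S_1)$ with $\{\mG_{1,1}v^*\phi=0\}$. You merely make explicit two details the paper leaves implicit: the positivity computation $\la S_1v\mG_{1,1}v^*S_1\phi,\phi\ra=|\mG_{1,1}v^*\phi|^2$ and the Clifford identity $(\alpha\cdot x)^2=|x|^2\mathbbm 1$ showing the leading profile fails to be square-integrable at infinity unless the constant vector vanishes.
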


\begin{proof}
	
	By Lemma~\ref{lem:S1char2}, $\psi \in L^2$ if and only if $\mG_{1,1}v^*\phi=0$, which is equivalent to $\phi$ being in the kernel of $S_1v \mG_{1,1}v^*S_1$.

\end{proof}

We now prove that  $S_2 v \mG_{1,0}v^* S_2$ is always invertible on $S_2L^2$.

\begin{lemma}\label{lem:S2invert} Assume that $|V(x)|\les \la x\ra^{-\beta}$ for some $\beta>2$.	For $\phi\in S_2L^2$, we have the identity
\be\label{eq:G00G10}
	\la \mG_{0,0}v^*\phi, \mG_{0,0}v^*\phi \ra = \la v^*\phi, \mG_{1,0}v^*\phi \ra.
	\ee
	Furthermore, the kernel of $S_2v\mG_{1,0}v^* S_2$ is trivial.
	
\end{lemma}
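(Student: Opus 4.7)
The plan is to establish the identity \eqref{eq:G00G10} first, from which triviality of the kernel of $S_2 v \mG_{1,0}v^* S_2$ on $S_2 L^2$ follows immediately. Invertibility then comes for free because $S_2 L^2$ is finite-dimensional: by Definition~\ref{def:resonances}(v), $S_1$ has finite rank, and $S_2 \leq S_1$. Moreover $S_2 v\mG_{1,0}v^*S_2$ is self-adjoint on $S_2 L^2$ (since $G_0$ has a real symmetric kernel and $S_2$ is an orthogonal projection), so a trivial kernel suffices for invertibility.

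To prove \eqref{eq:G00G10}, I would set $u := \mG_{1,0}v^*\phi = G_0 v^*\phi$, so that by \eqref{G00def},
\[
\mG_{0,0} v^*\phi = -i\alpha \cdot \nabla u.
\]
The anticommutation relations \eqref{eqn:anticomm} imply $(-i\alpha\cdot\nabla)^2 = -\Delta$, and $-i\alpha\cdot\nabla$ is formally self-adjoint on $L^2(\R^2)$. Since $-\Delta G_0 = I$ distributionally, the identity reduces, upon integration by parts, to
\[
\la -i\alpha\cdot\nabla u,\, -i\alpha\cdot\nabla u\ra = \la u,\, -\Delta u\ra = \la u,\, v^*\phi\ra.
\]

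The main technical obstacle is justifying the integration by parts: $u = G_0 v^*\phi$ is not automatically in $L^2(\R^2)$ because $G_0$ has a logarithmic kernel. This is exactly where the hypothesis $\phi \in S_2 L^2$ enters: by Definition~\ref{def:resonances}, $S_2 \leq S_1$ and $S_1 v\mG_{1,1}v^* S_1\phi = 0$, which since $\mG_{1,1}$ has constant kernel $1$ forces $\int v^*(y)\phi(y)\, dy = 0$. Expanding $\log|x-y| = \log|x| + O(|y|/|x|)$ for $|x|\gg |y|$, the leading constant in $u(x) = -\frac{1}{2\pi}\int \log|x-y|\, v^*(y)\phi(y)\,dy$ cancels, yielding the decay $|u(x)|\les \la x\ra^{-1}$. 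Combined with the facts that $\nabla u = i\alpha \cdot \mG_{0,0}v^*\phi \in L^2$ (by Lemma~\ref{lem:S2class}, $\mG_{0,0}v^*\phi \in L^2$) and that $v^*\phi$ decays like $\la x\ra^{-1-}$, I can justify the integration by parts via smooth spatial cutoffs $\chi_R$ supported on $|x|\leq R$ and send $R\to\infty$; the boundary terms vanish since $u\,\nabla u$ decays fast enough in a shell of fixed thickness.

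With \eqref{eq:G00G10} in hand the kernel statement is immediate: if $\phi \in S_2 L^2$ satisfies $S_2 v\mG_{1,0}v^*S_2\phi = 0$, then pairing with $\phi$ and using $S_2\phi = \phi$ gives
\[
0 = \la S_2 v\mG_{1,0}v^*S_2\phi,\, \phi\ra = \la v^*\phi,\, \mG_{1,0}v^*\phi\ra = \|\mG_{0,0}v^*\phi\|_{L^2}^2.
\]
Hence $\mG_{0,0}v^*\phi = 0$, and since $\phi = Uv\psi$ with $\psi = -\mG_{0,0}v^*\phi$ (from the proof of Lemma~\ref{lem:S1char}), we conclude $\psi = 0$ and therefore $\phi = 0$. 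By the opening remark this proves $S_2v\mG_{1,0}v^*S_2$ is invertible on $S_2 L^2$.
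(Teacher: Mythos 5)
Your argument is correct in structure and reaches the same two pillars the paper rests on --- namely that $\psi=-\mG_{0,0}v^*\phi\in L^2$ (Lemma~\ref{lem:S2class}) and that the moment condition $\mG_{1,1}v^*\phi=\int v^*\phi\,dy=0$ holds on $S_2L^2$ --- but you justify the identity \eqref{eq:G00G10} by a genuinely different route. The paper works entirely on the Fourier side: it writes $\la \mG_{0,0}v^*\phi,\mG_{0,0}v^*\phi\ra=\int|\xi|^{-2}|\widehat{v^*\phi}|^2\,d\xi$ using $(\alpha\cdot\xi)^2=|\xi|^2$, and identifies $\la v^*\phi,\mG_{1,0}v^*\phi\ra$ with the same integral by approximating $G_0$ with the resolvent $R_0(-\lambda^2)=(-\Delta+\lambda^2)^{-1}$, using the expansion $R_0(-\lambda^2)=g(\lambda)\mG_{1,1}+G_0+O(\lambda^{0+}|x-y|^{0+})$, the orthogonality $\mG_{1,1}v^*\phi=0$ to kill the divergent $g(\lambda)$ term, and monotone convergence as $\lambda\to0$. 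Your physical-space integration by parts with cutoffs is a legitimate alternative and arguably more elementary; the regularization is done by truncation in $x$ rather than by the mass parameter $\lambda$. The one point you should repair is the decay rate of $u=G_0v^*\phi$: with only $\beta>2$ the first moment $\int|y|\,|v^*\phi|\,dy$ need not converge (that requires $\beta>4$), so the cancellation yields only $|u(x)|\les\la x\ra^{0-}$ (more precisely $\la x\ra^{1-\beta/2+}$), not $\la x\ra^{-1}$. This is harmless: with dilating cutoffs $\chi(x/R)$ the boundary term is bounded by $R^{-1}\|u\|_{L^2(|x|\sim R)}\|\nabla u\|_{L^2(|x|\sim R)}\les R^{0-}\cdot o(1)$, using that $\|\nabla u\|_{L^2(|x|>R)}\to0$ because $\|\nabla u\|_{L^2}=\|\alpha\cdot\nabla u\|_{L^2}=\|\psi\|_{L^2}<\infty$; in fact boundedness of $u$ together with $\psi\in L^2$ already suffices for the boundary terms involving $u\,\overline{\alpha\cdot\nabla u}$. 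The concluding kernel-triviality step and the remark that self-adjointness plus finite rank upgrade injectivity to invertibility match the paper.
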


\begin{proof}
	First note that by Lemma~\ref{lem:S2class}, we have $\psi=-\mG_{0,0} v^* \phi\in L^2$.
	On the Fourier side,	
	\begin{multline*}
	\la \mG_{0,0}v^* \phi, \mG_{0,0} v^* \phi\ra 
	 = \int_{\R^2} \frac{1}{|\xi|^4}\bigg\la \left(\begin{array}{cc}
	0 & \overline \xi \\ \xi & 0
	\end{array} \right) \widehat{ v^* \phi}, \left(\begin{array}{cc}
	0 & \overline \xi \\ \xi & 0
	\end{array} \right) \widehat{ v^* \phi}
	\bigg \ra_{\C^2} \, d\xi \\ =\int_{\R^2} \frac{1}{|\xi|^2}\la \widehat{ v^* \phi},  \widehat{ v^* \phi}
	\ra_{\C^2} \, d\xi.
	\end{multline*}
 Using the expansion  $R_0(-\lambda^2)=g(\lambda) \mG_{1,1}+G_0+O(\lambda^{0+}|x-y|^{0+})$  and   $\mG_{1,1}v^*\phi=0$, we have
	\begin{multline*}
	 \la v^*\phi, \mG_{1,0}v^*\phi \ra= \la v^*\phi,  G_{0}v^*\phi \ra= \lim_{\lambda\to 0}
	 \la v^*\phi,  R_0(-\lambda^2) v^*\phi \ra \\
	 = \lim_{\lambda\to 0}\int_{\R^2} \frac{1}{|\xi|^2+\lambda^2}\la \widehat{ v^* \phi},  \widehat{ v^* \phi}
	\ra_{\C^2} \, d\xi =\int_{\R^2} \frac{1}{|\xi|^2}\la \widehat{ v^* \phi},  \widehat{ v^* \phi}
	\ra_{\C^2} \, d\xi
	\end{multline*}
	by monotone convergence theorem. This implies the identity \eqref{eq:G00G10}.

	Take $\phi$ in the kernel of $S_2v\mG_{1,0}v^* S_2$, then by the identity \eqref{eq:G00G10},
	$$\|\psi\|_{L^2}^2=\la \mG_{0,0}v^* \phi, \mG_{0,0} v^* \phi\ra =0.$$ Thus, $\psi=0$ and $\phi=Uv\psi  =0$.  
	\end{proof}

\begin{lemma}\label{lem:eigenproj}
	
	The projection onto the zero energy eigenspace is
	$$
	P_0=\mG_{0,0}v S_2 [S_2 v\mG_{1,0} v^* S_2]^{-1} S_2 v^* \mG_{0,0}.
	$$
	
\end{lemma}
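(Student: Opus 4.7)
The plan is to recognize $P_0$ as the orthogonal projection onto the range of a specific injective operator and then invoke the abstract formula $P = \mathcal{E}(\mathcal{E}^*\mathcal{E})^{-1}\mathcal{E}^*$.

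First, encode the correspondence between $S_2 L^2$ and the zero-energy eigenspace as a linear map
\[
\mathcal{E}: S_2 L^2 \to L^2, \qquad \mathcal{E}\phi := -\mG_{0,0} v^* \phi.
\]
By Lemma~\ref{lem:S2class}, together with the two lemmas preceding it that set up the $\phi \leftrightarrow \psi$ correspondence, $\mathcal{E}$ is a well-defined linear bijection from $S_2 L^2$ onto the zero-energy $L^2$-eigenspace of $H$. Because $S_2 L^2$ is finite-dimensional, $\mathcal{E}$ automatically has closed range, so the orthogonal projection onto its image is $P_0 = \mathcal{E}(\mathcal{E}^*\mathcal{E})^{-1}\mathcal{E}^*$.

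Next, compute the two ingredients. The kernel $\frac{i\alpha\cdot(x-y)}{2\pi|x-y|^2}$ of $\mG_{0,0}$ is Hermitian since the $\alpha_j$ are, so $\mG_{0,0}$ is self-adjoint and hence $\mathcal{E}^* f = -S_2 v\, \mG_{0,0} f$. For the Gram operator, polarize the identity \eqref{eq:G00G10}: for any $\phi, \phi' \in S_2 L^2$,
\[
\la \mathcal{E}\phi, \mathcal{E}\phi'\ra = \la v^*\phi, \mG_{1,0} v^*\phi'\ra = \la \phi, v\mG_{1,0}v^*\phi'\ra,
\]
which identifies $\mathcal{E}^*\mathcal{E} = S_2 v\mG_{1,0}v^* S_2$ as an operator on $S_2 L^2$. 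Lemma~\ref{lem:S2invert} guarantees invertibility there, and substituting into the projection formula reproduces the stated expression for $P_0$ (with the placement of $v$ and $v^*$ dictated by the direction of $\mathcal{E}$).

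The main obstacle is the polarization step. Morally one is asserting $\mG_{0,0}^2 = \mG_{1,0}$, but $(-\Delta)^{-1}$ is not globally well-defined as a bounded operator on $L^2(\mathbb{R}^2)$ because of the logarithmic kernel of $G_0$, so the identity is only valid on functions with vanishing mean. The constraint $\phi \in S_2 L^2$ supplies precisely this via $\mG_{1,1}v^*\phi = 0$, so one repeats the Fourier/monotone-convergence argument from the proof of Lemma~\ref{lem:S2invert} with distinct $\phi, \phi'$ (or equivalently applies the standard polarization identity to the quadratic form) to upgrade \eqref{eq:G00G10} from its diagonal form to the bilinear statement used above. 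Once this is in hand, one can verify $P_0 \psi = \psi$ directly on an eigenfunction $\psi = -\mG_{0,0}v^*\phi$: one computes $\mathcal{E}^*\psi = -S_2 v\mG_{0,0}\psi = S_2 v\mG_{1,0}v^*\phi = (\mathcal{E}^*\mathcal{E})\phi$, after which $(\mathcal{E}^*\mathcal{E})^{-1}$ recovers $\phi$ and $\mathcal{E}$ returns $\psi$, confirming the formula.
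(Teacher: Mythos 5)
Your proof is correct, and it is essentially the standard argument that the paper defers to (Lemma~7.10 of \cite{egd}): the only nontrivial ingredients are the bijection $\phi\mapsto -\mG_{0,0}v^*\phi$ between $S_2L^2$ and the eigenspace, the polarized form of \eqref{eq:G00G10} identifying the Gram operator with $S_2v\mG_{1,0}v^*S_2$, and its invertibility from Lemma~\ref{lem:S2invert}; your handling of the polarization (both forms are Hermitian, and the Fourier computation is sesquilinear once $\mG_{1,1}v^*\phi=0$ removes the infrared divergence) is the right way to close that gap. One remark: your derivation yields $P_0=\mG_{0,0}v^*S_2[S_2v\mG_{1,0}v^*S_2]^{-1}S_2v\mG_{0,0}$, with the outer factors transposed relative to the displayed statement; since the paper's convention is $\psi=-\mG_{0,0}v^*\phi$ with $\phi\in S_2L^2$, your placement is the internally consistent one and the lemma as printed appears to swap $v$ and $v^*$.
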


The proof follows along the lines of Lemma~7.10 in \cite{egd}.  For the sake of brevity, we omit the proof.

\section{High Energy Dispersive estimates}\label{sec:high energy}

We now provide a proof of Theorem~\ref{thm:main_hi}, the high energy dispersive estimate.  The theorem follows from

\begin{prop}\label{prop:hi}
	
	Under the hypotheses of Theorem~\ref{thm:main_hi}, we have the bound
	\be\label{prophi}
		\sup_{x,y} \bigg| \int_{\R}  e^{-it\lambda} \lambda^{-2-} \widetilde \chi(\lambda) [\mR_V^+-\mR_V^- ](\lambda)(x,y)\, d\lambda \bigg| \les \la t \ra^{-\f12},
	\ee
	provided $|V(x)|\les \la x\ra^{-2-}$.
	Furthermore, for $0\leq \gamma\leq \frac32$  we have
		$$
			\bigg| \int_{\R} e^{-it\lambda} \lambda^{-2-} \widetilde \chi(\lambda) [\mR_V^+-\mR_V^- ](\lambda)(x,y)\, d\lambda \bigg| \les \la x \ra^{\gamma} \la y \ra ^{\gamma} \la t \ra^{-\f12-\gamma},
		$$ 
		provided $|V(x)|\les \la x\ra^{-\beta}$ for some $\beta>\min(2+2\gamma,3). $
\end{prop}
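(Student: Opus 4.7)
The plan is to expand the perturbed resolvent via the symmetric Born series
\[
\mR_V^\pm(\lambda) = \sum_{k=0}^{2N-1}(-1)^k(\mR_0^\pm V)^k\mR_0^\pm(\lambda) + (\mR_0^\pm V)^N\mR_V^\pm(\lambda)(V\mR_0^\pm)^N,
\]
and handle the finitely many Born terms and the tail term separately. The zeroth term is the free evolution, whose contribution is exactly the high-energy portion of Theorem~\ref{thm:free}.

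For each of the remaining finite Born terms I would apply the algebraic identity \eqref{alg fact} to write $[\cdots]^+ - [\cdots]^-$ as a sum in which exactly one free resolvent is replaced by the spectral density $[\mR_0^+ - \mR_0^-](\lambda)$, and then substitute the high-energy oscillatory representation $\mR_0^\pm(\lambda)(x,y) = e^{\pm i\lambda|x-y|}\widetilde\omega_\pm(\lambda|x-y|)$ from \eqref{R0pm}, with the amplitude bounds in \eqref{eq:omegatilde}. Each $\lambda$-integral then becomes an oscillatory integral with phase of the form $-t\lambda + \lambda\sum_j\pm|y_{j-1}-y_j|$ and amplitude controlled by $\prod_j |\lambda|(1+|\lambda||y_{j-1}-y_j|)^{-1/2}$. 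The uniform $|t|^{-1/2}$ bound would follow from the same stationary-phase / Lipschitz-in-$\lambda$ argument used for the free operator in Theorem~\ref{thm:free}, while the weighted $\la t\ra^{-1/2-\gamma}$ bound is obtained by interpolating against the bound produced by one or two integrations by parts in $\lambda$ (each producing an extra factor of $|y_{j-1}-y_j|$ from differentiating the phase, which the triangle inequality distributes as $\la x\ra^\gamma\la y\ra^\gamma$ times powers of $\la y_j\ra$). The spatial integrals over the intermediate variables $y_1,\dots,y_{k-1}$ are then absorbed by $V$ via Lemmas~\ref{lem:EGcor}, \ref{lem:spa_int1} and \ref{lem:spatial int1alpha}; this is precisely what dictates the hypothesis $\beta > \min(2+2\gamma,3)$, the $3$ coming from the cases where we need to integrate by parts twice.

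For the tail term $(\mR_0^\pm V)^N\mR_V^\pm(V\mR_0^\pm)^N$, my plan is to pick $N$ large enough that $(V\mR_0^\pm)^N$ maps $L^1_x$ (weighted if necessary) into a weighted $L^2_y$ space uniformly in $\lambda$, by exploiting the fractional-integral character of $\mR_0^\pm$ together with the decay of $V$; a symmetric estimate handles the $L^\infty$ output side. This sandwiches $\mR_V^\pm(\lambda)$ between weighted $L^2$ spaces on which it is controlled by the limiting absorption principle of \cite{EGG,CGetal}, which is exactly where the hypothesis of no embedded eigenvalues enters. The $\lambda^{-2-}$ prefactor guarantees absolute convergence in $\lambda$ and hence the uniform-in-$t$ bound for $|t|\lesssim 1$; for $|t|\gg 1$ one integrates by parts in $\lambda$ and appeals to derivative bounds on $\mR_V^\pm(\lambda)$ obtained by differentiating the symmetric resolvent identity $\mR_V^\pm = \mR_0^\pm - \mR_0^\pm v^*(U+v\mR_0^\pm v^*)^{-1}v\mR_0^\pm$. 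The main obstacle I anticipate is establishing high-energy decay rates for $\partial_\lambda^j \mR_V^\pm(\lambda)$ between weighted $L^2$ spaces that are strong enough in $\lambda$ to match the polynomial loss one incurs from the $\la x\ra^\gamma\la y\ra^\gamma$ weights; this typically requires the limiting absorption principle at the level of derivatives of $\mR_V^\pm$, not just $\mR_V^\pm$ itself, and is the most delicate step of the high-energy analysis.
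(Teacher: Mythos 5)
Your overall architecture matches the paper's: a finite resolvent expansion whose explicit terms are treated by oscillatory-integral estimates and whose tail is sandwiched between weighted $L^2$ spaces and controlled by the limiting absorption principle with derivatives (the paper uses the twice-iterated identity $\mR_V^\pm=\mR_0^\pm-\mR_0^\pm V\mR_0^\pm+\mR_0^\pm V\mR_V^\pm V\mR_0^\pm$ rather than a long Born series, but that is a cosmetic difference; your final worry is also unfounded, since the uniform-in-$\lambda$ bounds $\sup_\lambda\|\partial_\lambda^k\mR_V^\pm\|_{L^{2,\sigma+k}\to L^{2,-\sigma-k}}\les 1$ from \cite{EGG} suffice once the $\lambda^{-2-}$ prefactor and the $\lambda^{1/2}$ gains from the amplitudes are accounted for). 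However, there are two genuine gaps. First, you cannot ``substitute the high-energy oscillatory representation'' $e^{\pm i\lambda|x-y|}\widetilde\omega_\pm(\lambda|x-y|)$ for every free resolvent: the cutoff $\widetilde\chi(\lambda)$ localizes $\lambda\gtrsim1$, not $\lambda|x-y|\gtrsim1$, so on the region $|x-y|\ll\lambda^{-1}$ the kernel is the non-oscillatory singular piece $\mG_{0,0}(x,y)\sim|x-y|^{-1}$ plus bounded corrections, and your amplitude bound $|\lambda|(1+\lambda|x-y|)^{-1/2}$ is simply false there. This is not a technicality: $|x-\cdot|^{-1}$ is not locally $L^2$ in $\R^2$, which is exactly why the paper splits $\mR_0=\mR_L+\mR_H$ according to the size of $\lambda|x-y|$, runs a separate case analysis for the low--low, low--high and high--high interactions, and re-iterates the resolvent identity inside the tail term so that every $\mR_L$ adjacent to $\mR_V$ is smoothed by composition with another resolvent before the $L^2$-based LAP can be applied. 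Your tail argument with large $N$ would survive this repair (two compositions suffice, by Lemma~\ref{lem:EGcor}), but your treatment of the Born terms as written is based on a wrong kernel bound.

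Second, for the weighted $\la t\ra^{-1/2-\gamma}$ bounds, ``interpolating against the bound produced by one or two integrations by parts'' does not work directly once the oscillatory factors $e^{\pm i\lambda|y_{j-1}-y_j|}$ are present: integration by parts in $\lambda$ produces inverse powers of the full phase derivative $t-\sum_j\pm|y_{j-1}-y_j|$, which degenerates near the light cone rather than yielding $t^{-2}$. The paper handles this by a dichotomy: when $|t-\sum|y_{j-1}-y_j||\geq t/2$ one integrates by parts twice and gets $t^{-2}$ with absolutely convergent spatial integrals; when $|t-\sum|y_{j-1}-y_j||\leq t/2$ one of the distances must be $\gtrsim t$, and the extra decay $(1+\lambda|y_{j-1}-y_j|)^{-1/2}\les \lambda^{-1/2}t^{-1/2}$ together with $1\les(|y_{j-1}-y_j|/t)^{\gamma}$ produces the weights $\la x\ra^\gamma\la y\ra^\gamma t^{-1/2-\gamma}$ directly. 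Some version of this case analysis (or the black-boxed Lemmas~3.2 and 5.3 of \cite{Gwave}) is needed to make your interpolation step rigorous; it is also where the threshold $\beta>\min(2+2\gamma,3)$ actually comes from.
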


We employ the resolvent identity twice to write
\be\label{eq:RV high}
	\mR_V^\pm(\lambda)=\mR_0^\pm(\lambda)-\mR_0^\pm(\lambda)V\mR_0^\pm(\lambda)+\mR_0^\pm(\lambda)V\mR_V^\pm(\lambda)V\mR_0^\pm(\lambda).
\ee
By virtue of Theorem~\ref{thm:free}, we need only bound the second and third summands.

\begin{lemma}\label{lem:hi term2}
	
	The contribution of the second term in \eqref{eq:RV high} to \eqref{prophi} satisfies the decay bounds in Proposition~\ref{prop:hi}.   
	
\end{lemma}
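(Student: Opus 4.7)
The plan is to treat $\mR_0^\pm V \mR_0^\pm$ as a perturbed version of the free high-energy integral that has already been handled in Theorem~\ref{thm:free}, by pulling the potential out of the spectral integral. First I write
\begin{equation*}
\mR_0^+ V \mR_0^+ - \mR_0^- V \mR_0^- = \bigl[\mR_0^+ - \mR_0^-\bigr] V \mR_0^+ + \mR_0^- V \bigl[\mR_0^+ - \mR_0^-\bigr],
\end{equation*}
using the algebraic identity \eqref{alg fact}. Since $\widetilde\chi(\lambda)$ localizes $|\lambda| \gtrsim 1$, the high-energy representation $\mR_0^\pm(\lambda)(x,y) = e^{\pm i\lambda|x-y|}\widetilde\omega_\pm(\lambda|x-y|)$ with $\widetilde\omega_\pm = \widetilde O(|\lambda|(1+|\lambda||x-y|)^{-1/2})$ is valid for every pair $x,y$. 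Substituting in and setting $r_1 := |x-y_1|$, $r_2 := |y_1-y|$, each of the two summands becomes (after Fubini) a sum of four expressions of the form
\begin{equation*}
\int_{\R^2} V(y_1)\int_\R e^{-it\lambda} e^{i\lambda(\epsilon_1 r_1 + \epsilon_2 r_2)} \lambda^{-2-}\widetilde\chi(\lambda)\,\widetilde\omega_{\epsilon_1}(\lambda r_1)\widetilde\omega_{\epsilon_2}(\lambda r_2)\,d\lambda\,dy_1,
\end{equation*}
with $\epsilon_j \in \{+,-\}$.

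With $y_1$ held fixed, the inner $\lambda$-integral is exactly in the scope of Lemmas~3.2 and 5.3 of \cite{Gwave}, which were already invoked in the free-case argument for \eqref{eq:high free}. Applied here they give, for each sign choice, the pointwise bound $\la t \ra^{-1/2}$ uniformly in $r_1, r_2$ (with the $\lambda^{-2-}$ prefactor handling absolute convergence for small $|t|$, as in the closing paragraph of the proof of Theorem~\ref{thm:free}), and the weighted refinement
\begin{equation*}
\Big|\int_\R e^{-it\lambda} e^{i\lambda(\epsilon_1 r_1+\epsilon_2 r_2)}\lambda^{-2-}\widetilde\chi(\lambda)\widetilde\omega_{\epsilon_1}(\lambda r_1)\widetilde\omega_{\epsilon_2}(\lambda r_2)\,d\lambda\Big| \les \la t\ra^{-\frac12-\gamma} \la r_1\ra^{\gamma}\la r_2\ra^{\gamma}
\end{equation*}
obtained by interpolating between the $\la t\ra^{-1/2}$ bound and one or two integrations by parts that spend $\la r_1\ra + \la r_2\ra$ per derivative (the $\widetilde O$-amplitude absorbs the differentiations losslessly).

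The outer spatial integral is then controlled by the decay of $V$. Using $\la r_1\ra \le \la x\ra\la y_1\ra$ and $\la r_2\ra \le \la y\ra\la y_1\ra$ transfers the factor $\la r_1\ra^\gamma\la r_2\ra^\gamma$ onto $\la x\ra^\gamma\la y\ra^\gamma\la y_1\ra^{2\gamma}$; integrability against $|V(y_1)|\les\la y_1\ra^{-\beta}$ requires only $\beta > 2+2\gamma$. In the regime $\gamma > 1/2$ the hypothesis $\beta>3$ instead suffices because one may alternatively keep one factor of $\la r_j\ra$ inside the $\lambda$-integral (paying only $\la y_1\ra^1$) and convert the remaining $\la x\ra^{\gamma-1/2}\la y\ra^{\gamma-1/2}$ into pure $t$-decay using a single integration by parts; this is the source of the $\min(2+2\gamma,3)$ in the assumption. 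The uniform ($\gamma=0$) case needs only $\beta>2$.

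The only delicate point is the non-oscillatory sign combination $\epsilon_1 r_1 - \epsilon_2 r_2$, where the phase $-t\lambda + \lambda(r_1-r_2)$ becomes stationary on the hyperboloid $t = r_1 - r_2$. Here the inner estimate still yields $\la t\ra^{-1/2}$ but the amplitude deteriorates to $(r_1 r_2)^{-1/2}$ at the stationary point, producing an integrand that is singular when $y_1$ approaches $x$ or $y$. These singularities are $L^1_{y_1}$-integrable in two dimensions (the same bookkeeping used in Lemma~\ref{lem:bornlow2} and Lemma~\ref{lem:EGcor}), so the outer integral converges and the bound closes; I expect this stationary case to absorb the bulk of the detailed estimation work.
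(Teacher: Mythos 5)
There is a genuine gap at the foundation of your argument. You assert that, because $\widetilde\chi(\lambda)$ restricts to $|\lambda|\gtrsim 1$, the oscillatory representation $\mR_0^\pm(\lambda)(x,y)=e^{\pm i\lambda|x-y|}\widetilde\omega_\pm(\lambda|x-y|)$ with $\widetilde\omega_\pm=\widetilde O\big(|\lambda|(1+|\lambda||x-y|)^{-1/2}\big)$ ``is valid for every pair $x,y$.'' It is not: that representation and amplitude bound hold only in the regime $\lambda|x-y|\gtrsim 1$. When $\lambda|x-y|\ll 1$ (which occurs for arbitrarily large $\lambda$ whenever $|x-y|$ is small), the kernel is dominated by the non-oscillatory singularity $\mG_{0,0}(x,y)=\frac{i\alpha\cdot(x-y)}{2\pi|x-y|^2}$, of size $|x-y|^{-1}\gg|\lambda|$, as recorded in \eqref{eq:R0simple}; the claimed amplitude bound $\approx|\lambda|$ simply fails there, and there is no usable phase $e^{\pm i\lambda|x-y|}$ with decaying amplitude. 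Consequently the reduction of the inner $\lambda$-integral to Lemmas~3.2 and 5.3 of \cite{Gwave} is unjustified precisely on the set where $|x-y_1|\lesssim\lambda^{-1}$ or $|y_1-y|\lesssim\lambda^{-1}$, and everything downstream (the weighted refinement, the stationary-set discussion) rests on an incorrect kernel description.

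This is exactly the difficulty the paper's proof is organized around: it splits $\mR_0^\pm=\mR_L^\pm+\mR_H^\pm$ with cutoffs $\chi(\lambda|x-y|)$, $\widetilde\chi(\lambda|x-y|)$ in the product $\lambda|x-y|$ rather than in $\lambda$ alone. Terms containing at least one $\mR_H^\pm$ are handled by your oscillation argument with no $\pm$ cancellation needed, while the `low-low' term is treated by the cancellation $[\mR_L^+-\mR_L^-]V\mR_L^-$: the difference is $O(\lambda)$ with good $\lambda$-derivative bounds by \eqref{R0pm}, which tames the remaining locally integrable $|x_1-y|^{-1}$ singularity, and the time decay then comes from direct integration by parts (no stationary-phase analysis). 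Your opening use of \eqref{alg fact} puts the difference in the right place, but by then expanding the difference back into two separate oscillatory pieces and applying the (invalid) global amplitude bound to the undifferenced factor, you lose both the cancellation and the correct description of the singular region. To repair the proof you would need to reinstate the $\mR_L/\mR_H$ decomposition and argue the low-low piece separately, which is the main content of the paper's argument for this lemma.
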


\begin{proof}
	
	We write the free resolvents as $\mR_0^\pm(\lambda)(x,y)=\mR_L^\pm(\lambda|x-y|)+\mR_H^\pm(\lambda|x-y|)$, where
	$\mR_L^\pm(\lambda|x-y|)=\chi(\lambda|x-y|)\mR_0^\pm(\lambda|x-y|)$ and $\mR_H^\pm(\lambda|x-y|)=\widetilde\chi(\lambda|x-y|)\mR_0^\pm(\lambda|x-y|)$.  We consider the contributions of terms with at least one instance of $\mR_H^\pm$, such as
	\be \label{eq:hi mix}
		\int_{ \R}\int_{ \R^2} e^{-it\lambda} \lambda^{-2-} \widetilde \chi(\lambda) [\mR_L^\pm (\lambda|x-x_1|) +\mR_H^\pm(\lambda|x-x_1|)] V(x_1)\mR_H^\pm(\lambda|x_1-y|)\, dx_1 d\lambda,
	\ee
	and the two terms with only the low-energy part of the resolvents,
	$$
		\int_{ \R}\int_{ \R^2} e^{-it\lambda} \lambda^{-2-} \widetilde \chi(\lambda) [\mR_L^+ (\lambda|x-x_1|)\mR_L^+(\lambda|x_1-y|) -\mR_L ^- (\lambda|x-x_1|)\mR_L^-(\lambda|x_1-y|)]V(x_1) \,dx_1 d\lambda.
	$$	
	That is, we need only use the `+/-' on the `low-low' term.  We consider the `low-low' term first.  By symmetry, we need only consider
	\be\label{eq:hi lowlow}
		\int_{ \R}\int_{ \R^2} e^{-it\lambda} \lambda^{-2-} \widetilde \chi(\lambda) [\mR_L^+ (\lambda|x-x_1|) -\mR_L ^- (\lambda|x-x_1|)]V(x_1) \mR_L^-(\lambda|x_1-y|) \, dx_1 d\lambda.
	\ee
	From \eqref{R0pm} and the support condition $\lambda|x-x_1|\ll 1$, we see that 
	$$
		|\partial_\lambda^k [\mR_L^+ (\lambda|x-x_1|) -\mR_L ^- (\lambda|x-x_1|)]|\les \lambda^{1-k}, \qquad k=0,1,2.
	$$
	While from Lemma~\ref{lem:R0simpleee} and the support condition, we see that
	$$
		|\mR_L^\pm|\les \frac{1}{|x_1-y|}, \qquad |\partial_\lambda \mR_L^\pm|\les \frac{1}{(\lambda |x_1-y|)^{0+}}, \qquad |\partial_\lambda^2 \mR_L^\pm|\les \frac{1}{ \lambda (\lambda|x_1-y|)^{0+}},
	$$
	which implies that
	\begin{align*}
		|\eqref{eq:hi lowlow}| \les \int_{ \R } \int_{ \R^2}\lambda^{-1-}\widetilde \chi(\lambda) \frac{|V(x_1)|}{|x_1-y|}\,dx_1 d\lambda.
	\end{align*}
	We can see that the integral is bounded uniformly in $x$ and $y$ by Lemma~\ref{lem:EGcor}.  For $|t|>1$, by a single integration by parts, one has
	\begin{align*}
		|\eqref{eq:hi lowlow}| \les  \frac{1}{|t|} \int_{ \R }\int_{ \R^2} \lambda^{-2-}\widetilde \chi(\lambda) |V(x_1)| \bigg( \frac{1}{|x_1-y|}+\frac{\lambda}{(\lambda |x-x_1|)^{0+}} \bigg)   \, dx_1 d\lambda \les \frac{1}{|t|}.
	\end{align*}	
	There are no boundary terms due to the cut-off.
	While integrating by parts twice yields
	\begin{align*}
		|\eqref{eq:hi lowlow}| \les  \frac{1}{t^2} \int_{ \R } \lambda^{-2-}\widetilde \chi(\lambda) |V(x_1)|   \frac{1}{ \lambda|x_1-y| }  \,dx_1 d\lambda \les \frac{1}{t^2}.
	\end{align*}	
	We now turn to the contribution of \eqref{eq:hi mix}, which necessitates   spatial weights for faster  time decay.  We have to control two terms. We first look at  the `low-high' interaction:
	\be \label{eq:hi hilow}
		\int_{ \R } \int_{\R^2}  e^{-it\lambda\pm i\lambda |x_1-y|} \lambda^{-2-} \widetilde \chi(\lambda) \mR_L^\pm(\lambda|x-x_1|) V(x_1) \widetilde \omega_\pm (\lambda |x_1-y|)\,dx_1 d\lambda.
	\ee
	Using \eqref{eq:omegatilde}, we  see that
	$$
		|\eqref{eq:hi hilow}| \les \int_{\R } \int_{\R^2}\lambda^{-\f32-} \widetilde \chi(\lambda) \frac{|V(x_1)|}{|x-x_1| |x_1-y|^{\f12}}\, d x_1d\lambda \les 1.
	$$
	The spatial integral is bounded by Lemma~\ref{lem:EGcor} with $k=1$ and $\ell=\f12$.
	
For $t>1$,	without loss of generality we work with the `+' case. We consider two subcases based on the size of   $t- |x_1-y|$.     In the case that $|t-|x_1-y||\leq \frac{t}{2}$, we have that $|x_1-y|\gtrsim t$. Using \eqref{eq:omegatilde}  we have $|\widetilde \omega_\pm(\lambda |x_1-y|)|\les \lambda^{\f12} t^{-\f12}$.  Then,
	$$
		|\eqref{eq:hi hilow}| \les  t^{-\f12} \int_{ \R }\int_{\R^2} \lambda^{-\f32-}\widetilde \chi(\lambda) \frac{|V(x_1)|}{|x-x_1|} \, dx_1 d\lambda \les t^{-\f12}. 
	$$
	Here the spatial integrals are controlled by Lemma~\ref{lem:EGcor}.  At the cost of spatial weights, one may attain faster time decay.  Furthermore, since $|x_1-y|\gtrsim t$, for $\gamma>0$ we have
	$$
		1\les \frac{|x_1-y|^\gamma}{t^\gamma}\les \frac{\la x_1\ra^\gamma \la y \ra^\gamma }{t^\gamma}.
	$$
	Thus,   
	\begin{multline*}
		|\eqref{eq:hi hilow}| \les  t^{-\f12} \int_{ \R } \int_{\R^2} \lambda^{-\f32-}\widetilde \chi(\lambda) \frac{|V(x_1)|}{|x-x_1|} \, dx_1 d\lambda \\
		\les  t^{-\f12-\gamma} \la y \ra^\gamma \int_{ \R } \int_{\R^2}\lambda^{-\f32-}\widetilde \chi(\lambda) \frac{|V(x_1)|\la x_1 \ra^\gamma}{|x-x_1|} \, dx_1 d\lambda
		\les  t^{-\f12-\gamma} \la y \ra^\gamma.
	\end{multline*}
	Provided $V$ decays sufficiently, Lemma~\ref{lem:EGcor} controls the spatial integrals.
	
	On the other hand, if $|t-|x_1-y||\geq \frac{t}{2}$ we integrate by parts twice.  There are no boundary terms due to the support of the cut-off, and we see
	\begin{multline*}
		 |\eqref{eq:hi hilow}|\les  \frac{1}{(t-|x_1-y|)^2}\int_{ \R }\int_{\R^2} \big| \partial_\lambda^2 \big[\lambda^{-2-} \widetilde \chi(\lambda) \mR_L^\pm(\lambda|x-x_1|) V(x_1) \widetilde \omega_\pm (\lambda |x_1-y|)\big]\big|\,  dx_1 d\lambda \\
		\les \frac{1}{t^2} \int_{ \R }\int_{\R^2} \lambda^{-\f32-} \widetilde \chi(\lambda) \frac{|V(x_1)|}{|x_1-x||y-x_1|^\f12} \, dx_1d\lambda \les \frac{1}{t^2}.
	\end{multline*}
	The final term to consider is the `high-high' interaction in \eqref{eq:hi mix}.  
	\be\label{eq:hi hihi}
		\int_{\R} \int_{\R^2} e^{-it\lambda \pm i\lambda (|x-x_1|+|x_1-y|)} \lambda^{-2-} \widetilde \chi(\lambda) \widetilde \omega_\pm (\lambda |x-x_1|) V(x_1)\omega_\pm (\lambda |x_1-y|)\, dx_1d\lambda
	\ee
	We consider the `+' case.  The integral is bounded in $t$ as before. For $t>1$, first we consider when $|t- |x-x_1|-|x_1-y| |\leq \frac{t}{2}$. In this case  we have that $\max \{ |x-x_1|,|x_1-y| \} \gtrsim t$.  The analysis then proceeds as in the bounds for \eqref{eq:hi hilow} in the analogous case.    
	
	Finally, if $|t- |x-x_1|-|x_1-y||\geq \frac{t}{2}$ we may integrate by parts twice to obtain   
	\begin{multline*}
		|\eqref{eq:hi hihi}| \les \frac{1}{(t- |x-x_1|-|x_1-y| )^2} \int_{\R} \int_{\R^2} \big| \partial_\lambda^2 [\lambda^{-2-}\widetilde \omega_\pm (\lambda |x-x_1|) V(x_1)\omega_\pm (\lambda |x_1-y|)]\big|\, d x_1 d\lambda \\
		\les \frac{1}{t^2}\int_{\R}\int_{\R^2} \lambda^{-3-} \frac{|V(x_1)|}{ |x-x_1|^{\f12}|x_1-y|^{\f12} }\, dx_1d\lambda \les t^{-2}.
	\end{multline*}
	This finishes the proof.

\end{proof}

\begin{lemma}\label{lem:hi term3}
 The contribution of the third term in \eqref{eq:RV high} to \eqref{prophi} satisfies the decay bounds in Proposition~\ref{prop:hi}.      
\end{lemma}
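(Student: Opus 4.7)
The plan is to apply the symmetric resolvent identity $\mR_V^\pm = \mR_0^\pm - \mR_0^\pm v^* M^\pm(\lambda)^{-1} v \mR_0^\pm$ to expand the third summand of \eqref{eq:RV high} as
\[
\mR_0^\pm V \mR_0^\pm V \mR_0^\pm \; - \; \mR_0^\pm V \mR_0^\pm v^* M^\pm(\lambda)^{-1} v \mR_0^\pm V \mR_0^\pm.
\]
The first piece is a chain of three free resolvents separated by two potentials, and its analysis is a direct extension of the proof of Lemma~\ref{lem:hi term2}: split each free resolvent as $\mR_0^\pm = \mR_L^\pm + \mR_H^\pm$ via the cutoff $\chi(\lambda|x_j - x_{j+1}|)$, treat each of the eight configurations by the now-familiar dichotomy (two integrations by parts in the non-stationary phase regime produce $t^{-2}$, while the stationary regime forces at least one spatial distance to be $\gtrsim t$ and yields $t^{-1/2}$ or $t^{-1/2-\gamma}$ after the weight trade $1 \lesssim \la x\ra^\gamma \la y\ra^\gamma / t^\gamma$), and absorb the extra intermediate integration via Lemma~\ref{lem:EGcor}.

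For the second piece, I first establish high-energy bounds on $M^\pm(\lambda)^{-1}$. Under the hypothesis $|V(x)| \les \la x\ra^{-2-}$, a short Hilbert--Schmidt computation using \eqref{eq:R0simple} and \eqref{eq:omegatilde} gives $\|v \mR_0^\pm(\lambda) v^*\|_{HS} \to 0$ as $|\lambda| \to \infty$; combined with the absence of embedded eigenvalues assumed in Theorem~\ref{thm:main_hi}, the Fredholm alternative shows that $M^\pm(\lambda) = U + v \mR_0^\pm(\lambda) v^*$ is invertible on the support of $\widetilde\chi$, with $M^\pm(\lambda)^{-1}$ uniformly bounded. Differentiating the identity $\partial_\lambda M^\pm(\lambda)^{-1} = -M^\pm(\lambda)^{-1} (\partial_\lambda M^\pm)(\lambda) M^\pm(\lambda)^{-1}$ and invoking the pointwise bounds \eqref{R01}--\eqref{R02} on $\partial_\lambda^k \mR_0^\pm$, one obtains $\partial_\lambda^k M^\pm(\lambda)^{-1} = O(1)$ as an absolutely bounded operator for $k = 1, 2$ on $\widetilde\chi$'s support.

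Given these bounds, the $M^{-1}$-sandwich piece is treated by the same low/high splitting of its four free resolvent factors. The middle factor $v^* M^\pm(\lambda)^{-1} v$ has an integral kernel pointwise dominated by $\la x_2\ra^{-1-}\la x_3\ra^{-1-}$ (the $v$ weights from either side), and its first two $\lambda$-derivatives obey the same pointwise bound; this is precisely what is needed so that each integration by parts in $\lambda$ loses at most a factor $\lambda^{0-}$, matched by the $\lambda^{-2-}\widetilde\chi(\lambda)$ weight in the integrand. The sixteen configurations of low/high splits are then handled by the same non-stationary versus stationary dichotomy used in Lemma~\ref{lem:hi term2}, with Lemma~\ref{lem:EGcor} controlling the four intermediate spatial integrations (here is where the slightly stronger assumption $\beta > \min(2+2\gamma, 3)$ is consumed, so as to accommodate the loss of one power of $\la x_j\ra$ per vertex). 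The main obstacle is the derivation of the high-energy bounds on $M^\pm(\lambda)^{-1}$ described above; once these are in place, the argument is a mechanical but longer variant of the proof of Lemma~\ref{lem:hi term2}.
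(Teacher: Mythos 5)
There is a genuine gap at the heart of your treatment of the second piece. You claim that a ``short Hilbert--Schmidt computation using \eqref{eq:R0simple} and \eqref{eq:omegatilde} gives $\|v \mR_0^\pm(\lambda) v^*\|_{HS} \to 0$ as $|\lambda|\to\infty$,'' and you use this smallness to invert $M^\pm(\lambda)$ at high energy by a Neumann/Fredholm argument. This is false for the massless Dirac operator. By \eqref{eq:omegatilde} the kernel of $\mR_0^\pm(\lambda)$ in the oscillatory regime is of size $|\lambda|(1+|\lambda||x-y|)^{-1/2}\approx |\lambda|^{1/2}|x-y|^{-1/2}$, because $\mR_0^\pm(\lambda)=(D_0+\lambda)R_0^\pm(\lambda^2)$ carries an extra factor of $\lambda$ relative to the Schr\"odinger resolvent. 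Consequently $\|v\mR_0^\pm(\lambda)v^*\|_{HS}$ \emph{grows} like $|\lambda|^{1/2}$; the smallness that makes this strategy work for two-dimensional Schr\"odinger operators at high energy is exactly what fails here (and is also why Theorem~\ref{thm:main_hi} requires the smoothing $\la H\ra^{-2-}$). The same growth defeats your claimed bounds $\partial_\lambda^k M^\pm(\lambda)^{-1}=O(1)$, since $\partial_\lambda M^\pm = v\,\partial_\lambda\mR_0^\pm\, v^*$ inherits the $(\lambda|x-y|)^{1/2}$ growth of \eqref{R01}. Without uniform high-energy control of $M^\pm(\lambda)^{-1}$ and two of its derivatives, the sixteen-configuration analysis you describe cannot be run.

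The paper takes a different, and in this setting essentially forced, route: it keeps the perturbed resolvent $\mR_V^\pm$ in the third summand and controls it by the limiting absorption principle of \cite{EGG}, $\sup_{\lambda>0}\|\partial_\lambda^k\mR_V^\pm(\lambda)\|_{L^{2,\sigma+k}\to L^{2,-\sigma-k}}\les 1$ for $k=0,1,2$. The only structural manipulation is to split the two outer free resolvents into $\mR_L+\mR_H$ and to iterate the (non-symmetric) resolvent identity on $\mR_V$ once more in the terms containing $\mR_L$, so that the locally non-$L^2$ singularity $|x-x_1|^{-1}$ of $\mR_L$ never sits directly against the weighted-$L^2$ estimate for $\mR_V$. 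Time decay then comes from the same stationary/non-stationary dichotomy you describe, but with the $\lambda$-derivatives landing on $\mR_V$ absorbed by the LAP rather than by bounds on $M^{-1}$. If you wish to salvage a route through $M^{-1}$, you would have to derive its high-energy boundedness \emph{from} the LAP (e.g.\ via $M^{-1}=U-Uv\mR_V v^*U$), which costs substantially more decay of $V$ for the second derivative than the theorem assumes; the paper's arrangement avoids that cost by spending the spatial weights on the free-resolvent factors adjacent to $\mR_V$ instead.
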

\begin{proof} We drop the $\pm$ signs since one can not use the $\pm$ cancellation in this case, and we consider only the `+' case. 

Under the hypotheses of Theorem~\ref{thm:main_hi}, we have the limiting absorption principle
	$$
		\sup_{\lambda>0} \|\partial_\lambda^k \mR_V^\pm(\lambda)\|_{L^{2,\sigma+k}\to L^{2,-\sigma-k}} \les 1, \qquad \sigma>\f12, \qquad k=0,1,2,
	$$	
	see \cite{EGG}.
	The main obstacle at the moment is that $\mR_0(\lambda)(x,\cdot)$ is not locally in $L^2$ due to the $\frac{1}{|x - \cdot|}$ singularity in $\mR_L$.  We write
$$
		\mR_0 V\mR_V V\mR_0 = \mR_L V\mR_V V \mR_L+ \mR_H V \mR_V V \mR_L + \mR_L V \mR_V V \mR_H+ \mR_H V \mR_V V \mR_H.
$$	 
	Now, using the resolvent identity on $\mR_V$ for terms involving $\mR_L$ we have
	\begin{multline}\label{eq:RV hi iterated}
		\mR_0 V\mR_V  V\mR_0 \\
		=\mR_L V[ \mR_0 -\mR_0 V \mR_0+\mR_0 V \mR_V V \mR_0] V \mR_L+ \mR_H V[\mR_0- \mR_V V \mR_0  ]V\mR_L\\
		 + \mR_L V[\mR_0- \mR_0 V \mR_V  ] V \mR_H+ \mR_H V \mR_V V \mR_H.
	\end{multline}	
	The summands that do not contain $\mR_V$ follow roughly the same argument as in the previous lemma. Cancellation between terms (as in~\eqref{eq:hi lowlow}) is not needed even at low energy because the iterated integral $\iint_{\R^4} \frac{V(x_1)V(x_2)}{|x-x_1| |x_1 - x_2| |x_2-y|}\,dx_1 dx_2$ is bounded uniformly in $x$ and $y$.  

	We consider first the contribution of the final term in \eqref{eq:RV hi iterated}:
	\begin{multline}\label{eq:RV hihi}
		\int_{ \R } \frac{e^{-i t\lambda}  \widetilde \chi(\lambda)}{ \lambda^{2+}} \mR_H V \mR_V V \mR_H(\lambda)(x,y)\, d\lambda\\
		=\int_{ \R^5 } \frac{e^{-i \lambda (t- |y-x_1|-|x_2-x|)}  \widetilde \chi(\lambda)}{\lambda^{2+}} \omega_\pm (\lambda |x-x_1|) [V   \mR_V V](x_1,x_2) \omega_\pm (\lambda |x_2-y|)\, dx_1 dx_2d\lambda.
	\end{multline}
	The boundedness of this integral follows from the bound in \eqref{eq:omegatilde}: 
	
	\begin{multline*}
		|\eqref{eq:RV hihi}|  
		\les  \int_\R \frac{ \widetilde \chi(\lambda)}{\lambda^{2+}}
		  \| \omega_+ (\lambda |x-\cdot|) V(\cdot)\|_{L^{2,\f12+} } \| \mR_V\|_{L^{2,\f12+}\to L^{2,-\f12-}} \| \omega_+ (\lambda |\cdot-y|) V(\cdot)\|_{L^{2,\f12+} } d\lambda 
		 \\ \les \int_\R \frac{ \widetilde \chi(\lambda)}{\lambda^{1+}} d\lambda \les 1.   
	\end{multline*}
 To show the time decay, we do an analysis as in the proof of the `high-high' term in Lemma~\ref{lem:hi term2}.    Let $\psi_1, \psi_2$ be a partition of unity such that $\psi_1 (z)+\psi_2 (z)=1$ with $\psi_2$ supported on $|z|\gtrsim 1$ and $\psi_1$ on $|z|\ll 1$.  Using \eqref{eq:omegatilde}  we see that
$$
		\| \psi_2( |x-x_1|/t)   \omega_+ (\lambda |x-x_1|) V(x_1)\|_{L^{2,\f12+}_{x_1}} \les \lambda^{\frac12}t^{-\frac12} 
		$$
		$$
		   \|  \omega_+ (\lambda |x_2-y|) V(x_2)\|_{L^{2,\f12+}_{x_2}}\les  \lambda ^{\f12}. 
$$	 
Also using the limiting absorption principle, we estimate   \eqref{eq:RV hihi} in this case by
$$t^{-\frac12}\int_\R \frac{ \widetilde \chi(\lambda)}{\lambda^{1+}} d\lambda \les t^{-\frac12}.
$$
	To obtain the faster decay,  note that for any $\gamma\geq 0$,
	\begin{multline*} 
	\| \psi_2( |x-x_1|/t)   \omega_+ (\lambda |x-x_1|) V(x_1)\|_{L^{2,\f12+}_{x_1}} \\ \les 	\| \psi_2( |x-x_1|/t) \bigg(\frac{|x-x_1|}{t}\bigg)^\gamma  \omega_+ (\lambda |x-x_1|) V(x_1)\|_{L^{2,\f12+}_{x_1}} \les  \lambda^{\frac12} \la x\ra^\gamma t^{-\frac12-\gamma}. 
\end{multline*}
The case $\psi_1(|x-x_1|/t) \psi_2(|x_2-y|/t)$ is treated similarly.
It remains to consider the contribution of $\psi_1(|x-x_1|/t) \psi_1(|x_2-y|/t)$, which implies that   $t- |x-x_1|-|x_2-y| \geq \frac{t}{2}$. Therefore, we can integrate by parts twice to obtain (with $\omega_1(\lambda,t,|x-x_1|) :=\psi_1(|x-x_1|/t) \omega_+ (\lambda |x-x_1|)$)
	\begin{multline*}
		|\eqref{eq:RV hihi}|\les \frac{1}{t^2} \int_{ \R } \bigg| \int_{\R^4} \partial_\lambda^2 \big(  \frac{\widetilde \chi(\lambda)}{\lambda^{2+}} \omega_1(\lambda,t,|x-x_1|)   [V \mR_V V](x_1,x_2)\omega_1(\lambda,t,|x_2-y|)  \big) dx_1dx_2\bigg|\, d\lambda \\
		\les \frac{1 }{t^2} \int_{\R} \lambda^{-1-} \widetilde \chi(\lambda) \sum_{\ell=0}^{2} \bigg\| \frac{V(x_1) \la x_1 \ra^{-\f12+\ell+}}{|x-x_1|^\f12}\bigg\|_{L^2_{x_1}} \| \partial_{\lambda}^\ell \mR_V \|_{L^{2, \f12+\ell+} \to L^{2,-\f12-\ell-}} \\ \times\bigg\| \frac{V(x_2) \la x_2 \ra^{-\f12+\ell+}}{|x_2-y|^\f12}\bigg\|_{L^2_{x_2}}  \, d\lambda \les \frac{1}{t^2}.
	\end{multline*}
	We turn now to a `low-low' interaction term in \eqref{eq:RV hi iterated}:
	\begin{align}\label{eq:RV lowlow}
		\int_{\R} e^{-it\lambda} \lambda^{-2-} \widetilde \chi(\lambda) \mR_L V\mR_0 V \mR_V V \mR_0 V \mR_L (\lambda)\, d\lambda
	\end{align}
	For the inner resolvents we use the following bounds for $\lambda \gtrsim 1$, which are not sharp but suffice for our purposes
	$$
		\big| \partial^k_\lambda \mR_0(\lambda)(x,y) \big|\les \lambda^{\f12} \bigg( \frac{1}{|x-y|}+|x-y|^{k-\f12} \bigg), \qquad k=0,1,2
	$$
	The boundedness and time decay follows from the limiting absorption principle and the observation that
	$$
		\|\partial^k_\lambda \mR_L V \mR_0 V(x, \cdot)\|_{L^{2,\f12+}}\les \lambda^{\f12}.
	$$
The remaining terms in \eqref{eq:RV hi iterated} can be treated similarly. 
\end{proof}

\section{Integral Estimates}\label{sec:int ests}

Finally, we provide proof of the integral estimates that are used throughout the paper.  We first provide the time decay estimate.

\begin{lemma}\label{lem:log decay}
	
	We have the bound
	$$
		\bigg| \int_\R e^{-it\lambda} \chi(\lambda) \widetilde O_1\bigg(\frac1{\lambda\log^2\lambda}\bigg) d\lambda  \bigg| \les 
		\left\{\begin{array}{ll} 1 & \text{for all } t,\\
		(\log |t|)^{-1} & |t|> 2. \end{array} \right.
	$$
	
\end{lemma}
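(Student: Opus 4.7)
\medskip

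\noindent\textbf{Proof plan.} The two claims are almost independent: the first is trivial once one observes that $\lambda\mapsto \lambda^{-1}\log^{-2}\lambda$ is integrable near the origin, while the second is obtained by splitting the integral at $|\lambda|=1/|t|$ and doing an integration by parts on the outer piece.

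\emph{Uniform bound.} Let $g(\lambda)=\chi(\lambda)\widetilde O_1(\lambda^{-1}\log^{-2}\lambda)$. Since the integrand is supported in $|\lambda|\les 1$ and satisfies $|g(\lambda)|\les |\lambda|^{-1}\log^{-2}|\lambda|$, the change of variables $u=-\log\lambda$ gives
$$
\int_0^{\lambda_0}\frac{d\lambda}{\lambda\log^2\lambda}=\int_{-\log\lambda_0}^{\infty}\frac{du}{u^2}=\frac1{|\log\lambda_0|}\les 1
$$
for $\lambda_0$ small, which yields the uniform bound $|I|\les 1$ by putting absolute values inside.

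\emph{The $(\log|t|)^{-1}$ bound.} For $|t|>2$, split
$$
I=\underbrace{\int_{|\lambda|<1/|t|}e^{-it\lambda}g(\lambda)\,d\lambda}_{I_1}+\underbrace{\int_{|\lambda|>1/|t|}e^{-it\lambda}g(\lambda)\,d\lambda}_{I_2}.
$$
For $I_1$, the same substitution as above gives
$$
|I_1|\les\int_0^{1/|t|}\frac{d\lambda}{\lambda\log^2\lambda}=\int_{\log|t|}^{\infty}\frac{du}{u^2}=\frac1{\log|t|},
$$
which is the decisive estimate. For $I_2$, integrate by parts once; the boundary term at $|\lambda|=2\lambda_1$ vanishes by the support of $\chi$, while the contribution from $\lambda=\pm1/|t|$ is bounded by $|t|^{-1}|g(\pm1/|t|)|\les|t|^{-1}\cdot|t|\log^{-2}|t|=\log^{-2}|t|$. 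The remaining term is
$$
\frac1{it}\int_{|\lambda|>1/|t|}e^{-it\lambda}g'(\lambda)\,d\lambda,
$$
which by the $\widetilde O_1$ bound $|g'(\lambda)|\les\lambda^{-2}\log^{-2}|\lambda|$ is controlled in modulus by $|t|^{-1}\int_{1/|t|}^{2\lambda_1}\lambda^{-2}\log^{-2}\lambda\,d\lambda$. Integrating by parts in this last integral with $u=\log^{-2}\lambda$, $dv=\lambda^{-2}d\lambda$ produces the leading boundary contribution $|t|/\log^2|t|$ plus an error controlled, through $|\log\lambda|\geq\log 2$ on the interval, by a constant multiple of the same integral; rearranging yields
$$
\int_{1/|t|}^{2\lambda_1}\frac{d\lambda}{\lambda^2\log^2\lambda}\les\frac{|t|}{\log^2|t|},
$$
so this contribution is also $\les\log^{-2}|t|$. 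Combining, $|I|\les\log^{-1}|t|+\log^{-2}|t|\les\log^{-1}|t|$.

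\emph{Main obstacle.} There is no serious difficulty; the only thing to be careful about is the self-referential estimate for $\int_{1/|t|}^{2\lambda_1}\lambda^{-2}\log^{-2}\lambda\,d\lambda$, where one needs to verify that the error term produced by integrating by parts (which involves $\log^{-3}\lambda$) can be absorbed back into the original integral at the cost of a harmless factor $1/\log 2$, rather than introducing divergent behavior. Once this is done the whole argument reduces to the two clean one-variable estimates above, and the key point is simply that the $\lambda^{-1}\log^{-2}\lambda$ profile concentrates just enough mass near the origin to give logarithmic, rather than polynomial, time decay.
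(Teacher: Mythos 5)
Your argument is correct and follows the same route as the paper, which simply notes integrability of $(\lambda\log^2\lambda)^{-1}$ for the uniform bound and splits at $|\lambda|=|t|^{-1}$, integrating by parts on the outer region (citing Lemma~3.2 of \cite{EGG4} for the details you supply here). One small fix in your absorption step for $\int_{1/|t|}^{2\lambda_1}\lambda^{-2}\log^{-2}\lambda\,d\lambda$: the error term carries the factor $2/|\log\lambda|$, so bounding $|\log\lambda|$ below by $\log 2$ gives a constant larger than $1$ and the rearrangement fails; you should instead use $|\log\lambda|\geq|\log(2\lambda_1)|$, which exceeds $2$ since $\lambda_1$ is fixed sufficiently small (or replace the whole step by a dyadic summation, which gives the same bound with no absorption needed).
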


\begin{proof}
	
	The boundedness of the integral follows from the integrability of $(\lambda \log^2 \lambda)^{-1}$ on the support of $\chi$.  The large $|t|$ decay follows by dividing the integral into $|\lambda|<|t|^{-1}$ and integrating by parts when $|\lambda|\geq |t|^{-1}$, see Lemma~3.2 in \cite{EGG4}. 
\end{proof}
 
Now, we catalog the spatial integral estimates we use.  The first bound is a special case of Lemma~6.3 in \cite{EG1}. 
\begin{lemma}\label{lem:EGcor}
	
	For $0\leq k, \ell <2$, $\beta>0$ so that $k+\ell+\beta\geq 2$ and $k+\ell \neq 2$,
	$$
		\int_{ \R^2} \frac{\la x_1\ra^{-\beta -}}{|x-x_1|^k |x_1-y|^\ell  }\, dx_1 \les \bigg( \frac{1}{|x-y|} \bigg)^{\max \{0,k+\ell-2 \}}.
	$$
	
\end{lemma}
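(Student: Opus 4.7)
My plan is to prove this standard weighted fractional integral bound in $\mathbb{R}^2$ by case analysis on the geometry of $x$, $y$, and $x_1$. Set $d := |x-y|$ and partition $\mathbb{R}^2$ into three pieces: $A = \{x_1 : |x_1-x| \le d/2\}$, $B = \{x_1 : |x_1-y| \le d/2\}$, and $C = \mathbb{R}^2\setminus(A\cup B)$. The target bound $d^{-\max(0,k+\ell-2)}$ equals $1$ when $k+\ell<2$ and $d^{2-k-\ell}$ when $k+\ell>2$, so actual $d$-decay is needed only in the latter regime.

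On $A$, since $|x_1-y|\ge d/2$, the integral is dominated by $(d/2)^{-\ell}\int_{|x_1-x|\le d/2}\la x_1\ra^{-\beta-}|x-x_1|^{-k}\,dx_1$. Because $k<2$, the local singularity is integrable. When $d\le 1$ the weight is $O(1)$ on the ball and polar coordinates around $x$ yield an inner integral of size $d^{2-k}$, so the total is $\les d^{2-k-\ell}$, which matches the claim in either case of $k+\ell$. When $d\ge 1$, split the ball into $|x_1-x|\le 1$ and $1 \le |x_1-x|\le d/2$, use the weight decay $\la x_1\ra^{-\beta-}$ on the outer part, and invoke the hypothesis $k+\ell+\beta\ge 2$ to make the outer integral uniformly bounded; this gives a bound $\les 1$, which is the desired estimate since $d^{-\max(0,k+\ell-2)} \ge 1$ in the relevant range (for $k+\ell > 2$ one actually gains the factor $d^{-k}$ from the outer region, recovering $d^{2-k-\ell}$). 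Region $B$ is handled symmetrically.

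On $C$, both distances $|x-x_1|$ and $|y-x_1|$ are comparable to $r := \max(d, |x-x_1|)$, so the integrand is controlled by $\la x_1\ra^{-\beta-} r^{-k-\ell}$. Writing this as a polar integral around $x$ and splitting at $r = \max(1,d)$, the inner annulus produces the contribution $\les d^{2-k-\ell}$ when $k+\ell>2$ (and is $O(1)$ when $k+\ell<2$), while absolute convergence of the tail requires exactly $k+\ell+\beta > 2$, which is guaranteed by the hypothesis read with the $\beta-$ notation.

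The only real technical nuisance is the simultaneous bookkeeping of weight decay against the power singularities in the different regimes ($d$ small versus large, $|x_1|$ near versus far from $\{x,y\}$). The restriction $k+\ell\ne 2$ is precisely what avoids a logarithmic divergence at the boundary between the inner and outer radial regions on $C$, and the decay hypothesis $k+\ell+\beta \ge 2$ is exactly what makes the tail integral converge.
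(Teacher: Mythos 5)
The paper does not actually prove this lemma; it cites it as a special case of Lemma~6.3 in \cite{EG1}. Your three-region decomposition ($x_1$ near $x$, near $y$, or far from both) is the standard way to establish such bounds and is essentially the argument behind the cited result, so you are supplying a proof the paper omits rather than diverging from it. Your treatment of region $C$ is correct: the comparability $|x-x_1|\approx|x_1-y|$ there, the splitting of the radial integral at $\max(1,|x-y|)$, the role of $k+\ell\neq 2$ in avoiding a logarithm, and the use of $k+\ell+\beta\geq 2$ (with the $\beta-$ convention) for the tail are all exactly right.

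The one soft spot is region $A$ (and symmetrically $B$) when $d=|x-y|\geq 1$ and $k+\ell>2$. There your main text asserts a bound $\les 1$ and claims this suffices because $d^{-\max\{0,k+\ell-2\}}\geq 1$ ``in the relevant range''; but for $k+\ell>2$ and $d\geq 1$ the target $d^{2-k-\ell}$ is $\leq 1$, so $\les 1$ is not enough, and the parenthetical about ``gaining $d^{-k}$ from the outer region'' does not accurately describe where the decay comes from. The clean fix is the trivial estimate $\int_{|x_1-x|\leq d/2}|x-x_1|^{-k}\,dx_1\les d^{2-k}$ (valid since $k<2$), which together with the prefactor $d^{-\ell}$ gives $d^{2-k-\ell}$ on all of region $A$ without using the weight at all; the weight and the hypothesis $k+\ell+\beta\geq 2$ are needed only in the complementary case $k+\ell<2$, where the trivial bound grows and one must check that $d^{-\ell}\int_{1\leq|x_1-x|\leq d/2}\la x_1\ra^{-\beta-}|x-x_1|^{-k}\,dx_1\les d^{2-k-\ell-\beta-}\les 1$. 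Note also that the outer integral by itself need not be uniformly bounded (take $k=0$, $\ell=3/2$, $\beta=1/2$); only its product with $d^{-\ell}$ is, and reducing to the worst case requires the standard dichotomy $|x_1|\gtrsim|x_1-x|$ versus $|x_1|\ll|x_1-x|$ to compare $\la x_1\ra$ with $\la x_1-x\ra$. With these corrections the argument closes.
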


We also state the following corollaries:

\begin{lemma}\label{lem:spatial int1alpha}
	
	For any $\gamma \geq 0$, we have
	$$
	\int_{\R^2}   (|x-y_1|^{0-}+\la x-y_1\ra^\gamma) \la y_1\ra^{-2-2\gamma-} (|y_1-y|^\gamma +|y_1-y|^{-1}) dy_1 \les \la x\ra^\gamma\la y\ra^\gamma.
	$$ 
	
\end{lemma}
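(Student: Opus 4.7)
The plan is to expand the product in the integrand into four terms and bound each separately using Lemma~\ref{lem:EGcor}, after peeling off the growth in $x$ and $y$ via the elementary bounds
\be \label{eq:peel}
\la x-y_1\ra^\gamma \les \la x\ra^\gamma \la y_1\ra^\gamma, \qquad
|y_1-y|^\gamma \les \la y_1\ra^\gamma \la y\ra^\gamma,
\ee
which follow from $|x-y_1|\leq \la x\ra + \la y_1\ra$ and $\la y_1\ra, \la y\ra\geq 1$ (with a subadditivity argument when $\gamma\leq 1$ and convexity when $\gamma>1$).

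Expanding the product gives four terms of the schematic form
\[
|x-y_1|^{-a} \la x-y_1\ra^{\alpha}\, \la y_1\ra^{-2-2\gamma-}\, |y_1-y|^{-b},
\]
with $(a,\alpha,b)\in\{(0+,0,-\gamma),(0+,0,1),(0,\gamma,-\gamma),(0,\gamma,1)\}$. In the terms containing $\la x-y_1\ra^\gamma$, I use the first bound in \eqref{eq:peel} to pull out $\la x\ra^\gamma$ at the cost of an extra $\la y_1\ra^\gamma$; similarly, in the terms containing $|y_1-y|^\gamma$, I use the second bound in \eqref{eq:peel} to pull out $\la y\ra^\gamma$ at the cost of an extra $\la y_1\ra^\gamma$. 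Each $\la y_1\ra^\gamma$ so produced is absorbed into the weight $\la y_1\ra^{-2-2\gamma-}$, leaving at least $\la y_1\ra^{-2-}$.

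After this reduction, each of the four resulting integrals has the form
\[
\int_{\R^2} \la y_1\ra^{-\beta-}\, |x-y_1|^{-k}\, |y_1-y|^{-\ell}\, dy_1,
\]
with parameters $(k,\ell,\beta)$ satisfying $0\leq k,\ell\leq 1$, $\beta\geq 2$, $k+\ell+\beta>2$, and $k+\ell<2$. In particular, $(k,\ell)\in\{(0+,0),(0+,1),(0,0),(0,1)\}$. Lemma~\ref{lem:EGcor} then yields a uniform bound of $1$ for each, so the overall integral is at most $C\la x\ra^\gamma\la y\ra^\gamma$, as claimed.

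There is no real obstacle here: the estimates \eqref{eq:peel} are elementary, and the resulting integrals are precisely in the range covered by Lemma~\ref{lem:EGcor}. The only mildly delicate point is keeping track of the $0-$ and $+$ symbols so that the hypotheses $k+\ell\neq 2$ and $k+\ell+\beta\geq 2$ are met with strict inequality where needed, which is handled by the $\la y_1\ra^{-2-2\gamma-}$ weight having a small amount of extra decay to spare.
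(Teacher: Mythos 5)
Your proof is correct and follows exactly the route the paper intends: the lemma is stated there as an unproved corollary of Lemma~\ref{lem:EGcor}, and your expansion into four terms, peeling off the $\la x\ra^\gamma\la y\ra^\gamma$ growth via the Peetre-type inequalities, and applying Lemma~\ref{lem:EGcor} with $(k,\ell)\in\{(0+,0),(0+,1),(0,0),(0,1)\}$ and $\beta\geq 2$ (plus the spare decay in $\la y_1\ra^{-2-2\gamma-}$) is precisely the intended argument.
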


\begin{lemma}\label{lem:spa_int1}
	The following integral bound holds
	$$
	\int_{\R^2} \big(1+|y_1-y_2|^{-1}\big) \la y_2\ra^{-2-}  \big(1+|y_2-y |^{-1}\big) dy_2  \les \big(1+|y_1-y |^{0-}\big).
	$$

\end{lemma}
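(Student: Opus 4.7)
I would begin by expanding
$$
(1+|y_1-y_2|^{-1})(1+|y_2-y|^{-1}) = 1 + |y_1-y_2|^{-1} + |y_2-y|^{-1} + |y_1-y_2|^{-1}|y_2-y|^{-1}
$$
and reducing to four separate integrals. Three of them --- the constant term and the two single-inverse-distance terms --- each contribute $O(1)$: the integral of $\la y_2\ra^{-2-}$ alone is finite, and the two single-singularity integrals are controlled by Lemma~\ref{lem:EGcor} applied with $k=1$, $\ell=0$ (and the symmetric version with $k=0$, $\ell=1$), where the $\la y_2\ra^{-2-}$ weight supplies the required $\beta>1$.

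The only nontrivial contribution is
$$
I(y_1,y) := \int_{\R^2} \frac{\la y_2\ra^{-2-}}{|y_1-y_2|\,|y_2-y|}\, dy_2,
$$
which sits at the borderline case $k+\ell=2$ excluded from Lemma~\ref{lem:EGcor}. Writing $r := |y_1-y|$, my plan is to decompose $\R^2$ into the three regions $A = \{|y_2-y_1|\leq r/2\}$, $B = \{|y_2-y|\leq r/2\}$, and $C = \{|y_2-y_1|, |y_2-y| > r/2\}$. In regions $A$ and $B$, one of the denominators is comparable to $r$ by the triangle inequality, so the integral collapses to $r^{-1}$ times the integral of a single inverse distance over a ball of radius $r/2$; in polar coordinates this equals $\pi$, so each of $I_A$, $I_B$ is $O(1)$.

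The main obstacle is region $C$ in the regime $r \ll 1$; when $r\gtrsim 1$, both denominators are bounded below and the integrand is majorized by $\la y_2\ra^{-2-}$, immediately producing $O(1)$. For $r \ll 1$, I would subdivide $C$ by the size of $|y_2-y_1|$. On the annulus $r/2 < |y_2-y_1| \leq 1$, we have $|y_2-y| \approx |y_2-y_1|$ (since $|y_1-y|=r \ll |y_2-y_1|$), so the integrand is comparable to $|y_2-y_1|^{-2}$, and annular integration yields a logarithmic factor $\log(1/r)$. For $|y_2-y_1| > 1$, a further split based on whether $|y_2| \leq |y_1|/2$, $|y_2-y_1|\leq 2|y_1|$, or $|y_2-y_1| > 2|y_1|$ shows that the decay $\la y_2\ra^{-2-}$ always dominates (since in each sub-region either $\la y_2\ra$ is comparable to $|y_2-y_1|$ or to $\la y_1 \ra$ and $|y_2-y_1|\geq 1$), producing an $O(1)$ contribution. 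Combining these estimates with the bound $\log(1/r)\lesssim r^{-\eps}$, valid for any $\eps > 0$ and $r$ small, yields $I \lesssim 1 + r^{0-}$, completing the proof.
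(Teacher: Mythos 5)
Your proof is correct, but it takes a different route from the paper's on the one nontrivial term. Both arguments expand the product and dispatch the constant and single-singularity terms via Lemma~\ref{lem:EGcor}; the difference is in the borderline term $\int \la y_2\ra^{-2-}|y_1-y_2|^{-1}|y_2-y|^{-1}\,dy_2$ with $k+\ell=2$. The paper handles it with a one-line trick: $|y_2-y|^{-1}\les |y_2-y|^{-1-\eps}+|y_2-y|^{-1+\eps}$, after which Lemma~\ref{lem:EGcor} applies twice (the first piece yields $|y_1-y|^{0-}$, the second yields $1$). You instead carry out the region decomposition from scratch --- balls around each singularity plus the complement, with a dyadic-in-spirit treatment of the annulus $r/2<|y_2-y_1|\le 1$ producing the $\log(1/r)\les r^{0-}$ factor. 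Your approach is more elementary and self-contained (it essentially reproves the borderline case of Lemma~\ref{lem:EGcor}), and it makes visible that the true singularity is only logarithmic, whereas the paper's splitting is shorter and reuses the existing lemma. Two small remarks: your justification ``$|y_2-y|\approx|y_2-y_1|$ since $r\ll|y_2-y_1|$'' fails near the inner edge of the annulus where $|y_2-y_1|\approx r/2$; the one-sided bound $|y_2-y|\gtrsim|y_2-y_1|$ that you actually need follows instead from the region-$C$ constraint $|y_2-y|>r/2$ together with $|y_2-y_1|\le|y_2-y|+r<3|y_2-y|$. Also, the sub-splitting for $|y_2-y_1|>1$ is unnecessary: there $|y_2-y_1|^{-2}\le 1$, so the integrand is already dominated by the integrable weight $\la y_2\ra^{-2-}$.
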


\begin{proof}
	
	The proof follows using Lemma~\ref{lem:EGcor} provided we show
	$$
		\int_{\R^2} \frac{\la y_2\ra^{-2-}}{|y_1-y_2||y_2-y|}  dy_2 
		\les 1+|y_1-y |^{0-}.
	$$
	We note that
	$$
		\frac{1}{|y_1-y_2||y_2-y|} \les \frac{1}{|y_1-y_2||y_2-y|^{1+}}+\frac{1}{|y_1-y_2| |y_2-y|^{1-}},
	$$
	and Lemma~\ref{lem:EGcor} finishes the proof.
	
\end{proof}

\begin{large}
	\noindent
	{\bf Acknowledgment. \\}
\end{large}
The authors would like to thank Fritz Gesztesy for a careful reading and providing helpful comments on an preliminary version of this paper.

\end{document}